\newtheorem{assumption}{Assumption}
\newcommand{\ignore}[1]{}
\newtheorem{alphatheorem}{Theorem}
\title{A Coloring Algorithm for Triangle-Free Graphs}
\author{Mohammad Shoaib Jamall
\protect\footnotemark[1]\protect\footnotetext[1]{Research supported in part by a Reuben H. Fleet Foundation Fellowship, 
and an ARCS Foundation Fellowship.}}
\institute{Department of Mathematics, UC San Diego \\
\email{mjamall@math.ucsd.edu}}
\begin{document}

\maketitle

\begin{abstract}
We give a randomized algorithm that
properly colors the vertices of a triangle-free graph $G$ on $n$ 
vertices using $O(\Delta(G)/\log \Delta(G))$ colors,
where $\Delta(G)$ is the maximum degree of $G$.
The algorithm takes $O(n \Delta^2(G) \log \Delta(G))$ time and succeeds with high probability, 
provided  $\Delta(G)$ is greater than $log^{1+\epsilon} n$ for a positive constant $\epsilon$. 
The number of colors is best possible up to a constant factor for triangle-free graphs.
As a result this gives an algorithmic proof for a sharp upper bound of the chromatic number of a triangle-free graph,
the existence of which was previously established by Kim and Johansson respectively.
\end{abstract}

\section{Introduction}
A \emph{proper vertex coloring} of a graph is an assignment of colors to all vertices
such that adjacent vertices have distinct colors.
The \emph{chromatic number} $\chi(G)$ of a graph $G$ is the minimum number of colors
required for a proper vertex coloring. 
Finding the chromatic number of a graph is NP-Hard \cite{GJ79}. 
Approximating it to within a polynomial ratio is also hard \cite{K01}.
For general graphs, $\Delta(G) + 1$ is a trivial upper bound.
Brooks' Theorem \cite{B41} shows that $\chi(G)$ can be $\Delta(G) + 1$ only if 
$G$ has a component which is either a complete subgraph or an odd cycle.

A natural question is:
can this bound be improved for graphs without large complete subgraphs?
In $1968$, 
Vizing \cite{V68} had asked what the best possible upper bound for the 
chromatic number of a triangle-free graph was.
Borodin and Kostochka \cite{BK77}, Catalin \cite{C78}, and Lawrence \cite{L78}
independently made progress in this direction;
they showed that for a $K_4$-free graph,
$\chi(G) \leq 3(\Delta(G) + 2)/4$.
On the other hand, Kostochka and Masurova \cite{KM77}, 
and Bollob{\'a}s \cite{B78} separately showed that there are graphs of arbitrarily large 
\emph{girth}(length of a shortest cycle) with
$\chi(G)$ of order $\Delta(G)/\log \Delta(G)$.
%

Further progress was made using the \emph{semi-random method}
to show that the chromatic number of graphs with large girth is $O(\Delta(G) / \log \Delta(G))$.
This technique,
also known as the \emph{pseudo-random method}, 
or the \emph{R{\"o}dl nibble},
appeared first in Ajtai, Koml{\'o}s and Szemer{\'e}di \cite{AKS81}
and was applied to problems in hypergraph packings,
Ramsey theory, colorings, and list colorings \cite{FR85,K92,K96,K2_95,PS89}.
In general, given a set $S_1$, the goal is to show that there is an object in $S_1$ with a desired property $\mathcal{P}$.
This is done by locating a sequence of non-empty subsets 
$S_1 \supseteq \dots \supseteq S_{\tau}$ with $S_{\tau}$ having property $\mathcal{P}$.
A randomized algorithm is applied to $S_t$, which guarantees that
$S_{t+1}$ will be obtained with some non-zero(often small) probability.
For upper bounds on chromatic number, 
the semi-random method is used to prove the existence of a proper coloring with a limited number of colors.
It does not give an efficient probabilistic algorithm.

In $1995$, Kim \cite{K95} proved that
\begin{equation*}
\chi(G) \leq (1+o(1))\frac{\Delta(G)}{\log \Delta(G)}
\end{equation*}
when $G$ has girth greater than $4$.
Later on, Johansson \cite{J96} showed that 
\begin{equation*}
\chi(G) \leq O(\frac{\Delta(G)}{\log \Delta(G)})
\end{equation*}
when $G$ is a triangle-free graph(girth greater than $3$).
Both Kim and Johansson used the semi-random method.
Alon, Krivelevich and Sudakov \cite{AKS99}, and 
Vu \cite{V02} extended the method of Johansson to prove bounds on the chromatic number 
for graphs in which no subgraph on the set of all neighbors of a vertex has too many edges.

Grable and Panconesi \cite{GP00} 
gave an algorithm for properly coloring a $\Delta$-regular graph with girth greater than 
$4$ and $\Delta \geq \log^{1+\epsilon} n$
for a positive constant $\epsilon$.
The procedure uses $O(\Delta / \log \Delta)$ colors and has polynomial running time.
This is a constructive version of the existential proof of Kim 
with extra conditions on the degree of the input graph.
Section $5$ of \cite{GP00} asserts that the algorithm can be extended to triangle-free graphs.
In Section \ref{sec:sketch} we will see a counter-example that shows that their analysis  does not work on all triangle-free graphs.
In particular, as will be seen, the analysis fails on a complete bipartite graph.

In this paper we give a method 
for coloring the larger class of triangle-free graphs.
Our randomized algorithm properly colors a triangle-free graph $G$ on $n$ vertices with 
$\Delta(G) \geq \log^{1+\epsilon} n$ for a positive constant $\epsilon$,
using $O(\Delta(G) / \log \Delta(G))$ colors in $O(n \Delta^2(G) \log \Delta(G))$ time.
The probability of failure goes to $0$ as $n$ becomes large.

To analyze our iterative algorithm we identify a collection of random variables.
The expected changes to these random variables after a round of the algorithm are written in terms of the values of the random variables before the round.
We thus obtain a set of recurrence relations and prove that our random variables are concentrated around the solutions to the recurrence relations.

Our method of analysis resembles that of Kim, and Grable et al.
However our algorithm and collection of random variables are different.
Also related to our analysis technique is Achlioptas and Molloy's study of the performance of a coloring algorithm on random graphs \cite{AM97}.
They setup recurrence relations as we do,
and then use the so-called differential equation method for random graph processes \cite{W95}.

We describe our algorithm in Section \ref{sec:pseudo}.
Section \ref{sec:sketch} contains motivation, which is followed by a formal description of the algorithm in Section \ref{sec:algorithm_details}.
We give an outline of the analysis in Section \ref{sec:outline}.
Section \ref{sec:prelim} contains some useful lemmas which we are used 
in Section \ref{sec:details} to give details of the analysis.
\section{An Iterative Algorithm for Coloring a Graph}
\label{sec:pseudo}
Our algorithm takes as input a triangle-free graph $G$ on $n$ vertices,
its maximum degree $\Delta$,
and the number of colors to use $\Delta / k$ where $k$ is a positive number.
It goes through rounds and assigns colors to more vertices each round.
Initially all vertices are \emph{uncolored}(no color assigned),
at the end we have a proper vertex coloring of $G$ with some probability.

\begin{definition} Let $t$ be a natural number. 
We define the following: \\
\begin{tabular}{ p{1.3cm} p{14cm} } 
$G_{t}$ & 
The graph induced on $G$ by the vertices that are uncolored at the beginning of round $t$. \\
$N_{t}(u)$ & 
The set of vertices adjacent to vertex $u$ in $G_t$. 
That is, the set of uncolored neighbors of $u$ at the beginning of round $t$. \\
$S_t(u)$ & The list of colors that may be assigned to vertex 
$u$ in round $t$, also called the \emph{palette} of $u$.
For all $u$ in $V(G)$,
\begin{equation*}
S_0(u) = \{1, \dots, \Delta/k\}.
\end{equation*} 
\\
$D_{t}(u,c)$ & The set of vertices adjacent to $u$ 
that may be assigned color $c$ in round $t$. That is,
\begin{equation*}
D_t(u,c) := \{v \in N_t(u) | c \in S_t(v)\}.
\end{equation*} 
\end{tabular}
\end{definition}
\begin{samepage}
It will be useful to define variables for the sizes of the sets $N_t(u)$, $S_t(u)$, and $D_t(u,c)$.
\begin{definition} {\ } 
\begin{align*}
\eta_t(u) &= |N_t(u)| \\
s_t(u) &= |S_t(u)| \\
d_t(u,c) &= |D_t(u,c)|
\end{align*}
\end{definition}
\end{samepage}
Observe that for every round $t$, vertex $u$ in $V(G)$, and color $c$ in $\{1, \dots, \Delta/k\}$,
\begin{equation*}
d_t(u,c) , \eta_t(u) \leq \Delta, \;\; s_0(u) = \Delta/k, \;\; s_t(u) \leq \Delta/k.
\end{equation*}

\subsection{A Sketch of the Algorithm and the Ideas behind its Analysis}
\label{sec:sketch}
We start by considering an algorithm that works for $\Delta$-regular graphs
with girth greater than $4$. The rounds of this algorithm will go through two stages.
The number of colors in the palette of each uncolored vertex will be greater than the
number of uncolored adjacent vertices at the end of the second stage with some probability.
Then the partial coloring of the graph can be completed easily to give a proper coloring.

Let $(\eta_t)$, $(d_t)$, and $(s_t)$ be sequences defined recursively as \\
\begin{center}
\begin{tabular}{l l p{1cm} l l}
$\eta_0$ 	& 	$:= \Delta$		&	&	$\eta_{t+1}$	& 	$:= \eta_t (1- c_1 \frac {s_t}{d_t})$ \\
$d_0$ 	& 	$:= \Delta$		&	&	$d_{t+1}$		& 	$:= d_t (1- c_1 \frac {s_t}{d_t}) c_2$ \\
$s_0$ 	& 	$:= \Delta/k$		&	&	$s_{t+1}$		& 	$:= s_t c_2$ \\
\end{tabular}
\end{center}
where $c_1$ and $c_2$ are constants between $0$ and $1$,
which are determined by the analysis of the algorithm.

\begin{description}
\item[First Stage]
{\ }
\begin{center}
\framebox{
\begin{minipage}{15cm}
{\bf Repeat at every round $t$, until $d_{t}/s_t < 1$}
\\
{\it Phase I - Coloring Attempt }
\\
For each vertex $u$ in $G_t$: \\
\hspace*{.5cm}
	{\it Awake} vertex $u$ with probabilitity $s_{t}/d_{t}$. \\
\hspace*{.5cm}
	If {\it awake}, assign to $u$ a color chosen from $S_t(u)$ uniformly at random. \\

{\it Phase II - Conflict Resolution}
\\
For each vertex $u$ in $G_t$: \\
\hspace*{.5cm}
	If {\it u is awake}, \\
\hspace*{1cm}
		uncolor $u$ if an adjacent vertex is assigned the same color in the coloring attempt phase. \\
\hspace*{.5cm}
	Remove from $S_t(u)$, all colors assigned to adjacent vertices. \\
\hspace*{.5cm}
	$S_{t+1}(u) = S_t(u).$ \\
{\bf end repeat}
\end{minipage}}
\end{center}
{\ } \\
Observe that $d_0 / s_0 = k$ and
\begin{equation*}
\frac{d_{t+1}}{s_{t+1}} = \frac {d_t}{s_t} - c_1.
\end{equation*}
In $O(k)$ rounds $d_t / s_t$ will be less that $1$, and this marks the end of the first stage.
\end{description}
\begin{description}
\item[Second Stage]
We change the recursive equations for $\eta_t$, $d_t$, and $s_t$. \\
\begin{center}
\begin{tabular}{l l}
$\eta_{t+1}$	& 	$:= \eta_t (1- c_1 e^{c_2 d_t / s_t})$ \\
$d_{t+1}$		& 	$:= d_t (1- c_1 e^{c_2 d_t / s_t}) e^{- c_2 d_t / s_t}$ \\
$s_{t+1}$		& 	$:= s_t e^{- c_2 d_t / s_t}$ \\
\end{tabular}
\end{center}
All uncolored vertices are woken up at every round.
In this stage, $\eta_t$ decreases much faster than $s_t$
and the repeat-until loop is repeated until 
$$\eta_t < s_t.$$
Otherwise the second stage is the same as the first.

\end{description}

\noindent The two stage algorithm is derived from Grable et al.
Their analysis tells us that if graph $G$ has girth greater than $4$ and 
$\Delta \geq \log^{1+\epsilon} n$ for some positive constant $\epsilon$,
then there are constants $c_1$ and $c_2$ less than $1$ such that at each round $t$,
$\forall u \in V(G_t), \forall c \in S_t(u)$
\begin{equation}
\label{eq:inv_sk}
\eta_t(u) = \eta_t(1+o(1)), \;\;\; s_t(u) = s_t(1+o(1)), \;\;\; d_t(u,c) = d_t(1+o(1))
\end{equation}
with probability $1-o(1)$.
The equations above imply that at the end of the second stage $s_t(u) > \eta_t(u)$
for all uncolored vertices $u$ with probability approaching $1$ as $n$ approaching $\infty$.

The change we have made to Grable et al. so far,
is that we remove all colors temporarily assigned to neighbors of a vertex from its palette,
instead of removing only those assigned permanently.
This simple but powerful idea, adapted from Kim \cite{K95},
will waste a few colors from the palettes, and 
simplify our algorithm and its analysis significantly.

\subsubsection{A counter-example to demonstrate that Grable et al. does not work on graphs with girth greater than $3$.}
The analysis for the above algorithm is probabilistic and proves the property in equation 
\eqref{eq:inv_sk} by induction,
showing concentration of the variables around their expectations.
It fails for graphs with $4$-cycles. An example illustrates why:
Consider a vertex $u$ whose $2$-neighborhood, 
the graph induced by vertices within distance $2$ of $u$,
is the complete bipartite graph 
$K_{\Delta,\Delta}$ with partitions $X$ and $Y$.
Suppose that $u$ and another vertex $v$ are in $X$.
If $v$ is colored with $c$ in round $0$ while $u$ remains uncolored,
then the set $D_{1}(u,c) = \emptyset$; this violates equation \eqref{eq:inv_sk} since $d_1 \geq 1$ 
if for example $k \geq 2$ and $\Delta \geq 2/c_2$.
So, when the graph has $4$-cycles, 
$d_{t+1}(u,c)$ is not necessarily concentrated around its expectation given 
the state of the algorithm at the beginning of round $t$.
Maintaining equation \eqref{eq:inv_sk} is crucial for the proof in Grable et al., 
and this violation is the error we mentioned in the introduction;
their algorithm and analysis do not work for triangle-free graphs in general!

We must modify the algorithm in two more ways.
\begin{description}
\item[First Modification: A technique for coloring graphs with $4$-cycles.]
While $d_t(u,c)$ is not concentrated enough when the graph has $4$-cycles, 
our analysis will show that the average of 
$d_{t+1}(u,c)$ over all colors in the palette of a vertex $u$
is concentrated enough.
How does this benefit us?
Markov's famous inequality may be interpreted as:
a list of $s$ positive number which average $d$
has at most $s/q$ numbers larger than $qd$
for any positive number $q$.
We modify the algorithm so that at the end of each round $t$,
every vertex $u$ removes from its palette every color $c$ with
$d_{t+1}(u,c)$ larger than $q d_{t+1}$ for some constant $q$ larger than $1$.
Look at what happens in round $t=1$.
By a straightforward application of Markov's inequality, 
instead of equation \eqref{eq:inv_sk} we will have the less stringent property:
$\forall u \in V(G_t), \forall c \in S_t(u)$ \\
\begin{align}
\label{eq:inv2_sk}
\eta_t(u) \leq \eta_t(1+o(1)), \;\; s_t(u) \geq \frac {q-1}{q} s_t(1-o(1)), 
		\;\; d_t(u,c) \leq q d_t(1+o(1)).
\end{align}
with probability $1-o(1)$.
In fact, using a generalization of Markov's inequality,
the analysis will show that with a few more modifications our algorithm 
maintains a slightly stronger property(still weaker than equation \eqref{eq:inv_sk}).

{\ }\;\;\;\; Equation \eqref{eq:inv_sk} implies that the $\eta_t(u)$, $s_t(u)$, and $d_t(u)$
at all uncolored vertices $u$ are about the same.
It is a strong statement and helps in the proofs,
but is too much to maintain on graphs with $4$-cycles.
Equation \eqref{eq:inv2_sk} is weaker and is guaranteed by our algorithm and what is more,
it is sufficient to guarantee that all uncolored vertices at the end of the second stage
have palettes with more colors than the number of uncolored adjacent vertices.
This is a key idea in our algorithm.
\item[Second Modification: Using independent random variables for easier analysis.]
Instead of waking up a vertex with some probability,
and then choosing a color from its palette uniformly at random;
for each uncolored vertex $u$ and color $c$ in its palette,
we will assign $c$ to $u$ independently with some probability.
In case multiple colors remain assigned to the vertex after the conflict resolution phase,
we will arbitrarily choose one of them to permanently color the vertex.
This modification, adapted from Johansson \cite{J96},
will make concentration of our random variables simpler.
\end{description}
Next we provide a formal description of the algorithm we have just motivated.

\subsection{A Formal Description of the Algorithm}
\label{sec:algorithm_details}
In each round of the algorithm,
some vertices are colored.
The details of the coloring procedure vary depending on which of three stages the algorithm is in.
\begin{description}
\item[First Stage]
Let $q$ be a constant greater than $1$, 
and let $(\eta_t)$, $(d_t)$, and $(s_t)$ be sequences defined recursively as
\begin{align*}
\eta_0 	&:= \Delta		& \mbox{\;\;\;}	
		&	\eta_{t+1}:= \eta_t (1- \frac{q-1}{2q^3} e^{-1/q} \frac {s_t}{d_t}) \\
d_0 		&:= \Delta		&			
		&	d_{t+1}:= d_t (1- \frac{q-1}{2q^3} e^{-1/q} \frac {s_t}{d_t}) e^{-1/q} \\
s_0	 	&:= \Delta/k	&
		&	s_{t+1} := s_t e^{-1/q}. \\
\end{align*}
\begin{equation}
\label{eq:rec_stage_1}
{\ }
\end{equation}
For round $t$, vertex $u$, and color $c$,
\begin{align}
\label{eq:pr_u_dr_c}
	\mathcal{F}_t(u,c) &:= \{c \mbox{ is not assigned to any vertex adjacent to }u\mbox{ in round }t\}
\end{align}
is an event in the probability space generated by the random choices of the algorithm in round $t$,
given the state of all data structures at the beginning of the round.

Let 
\begin{align*}
Desired\_\mathcal{F}_t &:= e^{-1/q}.
\end{align*}
\begin{center}
\framebox{
\begin{minipage}{15cm}

{\bf Repeat at every round $t$, until $d_{t} / s_t < 1 / q^2$}\\
{\it Phase I - Coloring Attempt }
\\
For each vertex $u$ in $G_t$, and color $c$ in $S_t(u)$: \\
\hspace*{1cm}
	Assign $c$ to $u$ with probability $\frac 1{q^2} \frac 1 {d_t}.$ \\
\\
{\it Phase II - Conflict Resolution}
\\
For each vertex $u$ in $G_t$: \\
\hspace*{.5cm}
	{\it Phase II.1} \\
\hspace*{.5cm}
	Remove from $S_t(u)$, all colors assigned to adjacent vertices. \\
\hspace*{.5cm}
	{\it Phase II.2} \\
\hspace*{.5cm}
	For each color $c$ in $S_{t}(u)$, remove $c$ from $S_{t}(u)$ with probability
\begin{equation*}
1 - min(1, \frac{Desired\_\mathcal{F}_t(u,c)}{Pr(\mathcal{F}_t(u,c)}).
\end{equation*}
\hspace*{.5cm}
	If $S_t(u)$ has at least one color which is assigned to $u$, \\
\hspace*{1cm}
		then arbirarily pick an assigned color from $S_t(u)$ to permanently color $u$. \\
\\
{\it Phase III - Cleanup(discard all colors $c$ with $d_{t+1}(u,c) \gtrsim q d_{t+1}$ from palette)}
\\
For each vertex $u$ in $G_t$: \\
\hspace*{.5cm}
	$S_{t+1}(u) = S_t(u)$.\\
\hspace*{.5cm}
	Let $\alpha = 1 - |S_{t+1}(u)| / s_{t+1}.$ \\
\hspace*{.5cm}
	If $\alpha < 0$, then $\alpha = 0$, otherwise if $\alpha > 1/q$, then $\alpha = 1/q$. \\
\hspace*{.5cm}
	Let $\gamma$ be the smallest number in $[1, \infty)$ so that
\begin{equation*}
	Average_{c \in S_{t+1}(u)} d_{t+1}(u,c) \leq \frac {1 - q \alpha}{1 - \alpha} \gamma d_{t+1}.
\end{equation*}
\hspace*{.5cm}
	Remove all colors $c$ with $d_{t+1}(u,c) \geq q \gamma d_{t+1}$ from $S_{t+1}(u)$. \\
{\bf end repeat}

\end{minipage}}
\end{center}

\item[Second Stage]
We change the recursive equations defining the constants $\eta_t$, $d_t$, and $s_t$.
\begin{align*}
\eta_{t+1} &:= \eta_t (1- \frac{q-1}{2q^3} e^{-\frac 1{q} \frac {d_t}{s_t}}) \\
d_{t+1} &:= d_t (1- \frac{q-1}{2q^3} e^{- \frac 1{q} \frac {d_t}{s_t}})e^{-\frac 1{q} \frac {d_t}{s_t}} \\
s_{t+1} &:= s_t e^{-\frac 1{q} \frac {d_t}{s_t}}
\end{align*}
\begin{equation}
\label{eq:rec_stage_2}
{\ }
\end{equation}
Also
\begin{align*}
Desired\_\mathcal{F}_t &:= e^{-\frac 1{q} \frac {d_t}{s_t}}.
\end{align*}
All other details of the repeat-until loop of the first stage are the same for the second stage,
except that an uncolored vertex $u$ is assigned a color $c$ from its palette with probability
$$\frac 1 {q^2} \frac 1 {s_t},$$
and we repeat until
$$\eta_t < \frac{q-1}{q} s_t.$$
\item[Third Stage(Greedy Coloring)]
Color each uncolored vertex $u$,
with an arbitrary color from its palette which has not been used to color an adjacent vertex.
\end{description}

\section{The Main Theorem}
\label{sec:outline}

We say that a sequence $x(n)$ is $O(f(n))$ if there is a positive number $M$ 
such that $|x(n)| \leq M |f(n)|$.
All sequences in the big-oh are indexed by $n$, the number of vertices in graph $G$.
Remember that each occurrence of the big-oh comes with a distinct constant 
$M$ which may depend on the constant $\epsilon$.


\begin{theorem}[Main Theorem]
Given a triangle-free graph $G$ on $n$ vertices with maximum degree 
$\Delta \geq \log^{1+\epsilon} n$ for a positive constant $\epsilon$,
and $q=7$,
there is a positive constant $c_\epsilon$ such that starting with 
$c_{\epsilon} \Delta / \log \Delta$ colors,
our algorithm finds a proper coloring of the graph in 
$O(n \Delta^2 \log \Delta)$ time with probability $1-O(1/n)$.
\end{theorem}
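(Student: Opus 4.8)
The plan is to track, for every round $t$, the random variables $\eta_t(u)$, $s_t(u)$, $d_t(u,c)$ and to prove by induction on $t$ an invariant $\mathcal{I}_t$ asserting that these stay close to the deterministic sequences $\eta_t,s_t,d_t$ of \eqref{eq:rec_stage_1}--\eqref{eq:rec_stage_2}, in the one-sided form suggested by \eqref{eq:inv2_sk} (suitably strengthened), with error terms $\beta_t=o(1)$ that grow slowly over the $O(\log\Delta)$ rounds. First I would fix $k$ so that the palette has $c_\epsilon\Delta/\log\Delta$ colors, i.e. $k=\Theta(\log\Delta)$, and choose $c_\epsilon$ small enough that three things hold: (i) the Stage~1 loop lasts $O(k)=O(\log\Delta)$ rounds, which follows from $d_{t+1}/s_{t+1}=d_t/s_t-\tfrac{q-1}{2q^3}e^{-1/q}$, so $d_t/s_t$ falls linearly from $k$ to below $1/q^2$; (ii) the Stage~2 loop lasts $O(\log\Delta)$ rounds, since there $\eta_t$ contracts by a constant factor each round while $s_t$ contracts only by $e^{-\Theta(d_t/s_t)}$ with $d_t/s_t\to0$, so $\eta_t/s_t$ drops below $(q-1)/q$ in $O(\log\Delta)$ rounds; and (iii) the accumulated error $\beta_t$ is still $o(1)$ at termination. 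The value $q=7$ is used only to keep all constants appearing in \eqref{eq:rec_stage_1}--\eqref{eq:rec_stage_2} and in the Phase~III estimate below in the ranges the analysis needs.

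The inductive step splits into three parts. \emph{(a) Expectations match the recurrences.} Conditioning on the state at the start of round $t$ and assuming $\mathcal{I}_t$, I would compute $\mathbb{E}[\eta_{t+1}(u)]$ and $\mathbb{E}[s_{t+1}(u)]$ after Phase~II, and the conditional expectation of $\mathrm{Average}_{c}\,d_{t+1}(u,c)$. Since Phase~I assigns each color to each vertex independently with probability $\tfrac1{q^2d_t}$ (resp. $\tfrac1{q^2s_t}$ in Stage~2), one has $\Pr(\mathcal{F}_t(u,c))=(1-\tfrac1{q^2d_t})^{d_t(u,c)}$, and Phase~II.2 rescales the survival probability of every palette color to exactly $\mathrm{Desired}\_\mathcal{F}_t$; plugging in $\mathcal{I}_t$ shows these expectations equal $\eta_{t+1}$, $s_{t+1}$, $d_{t+1}$ up to factors $1\pm o(1)$. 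Triangle-freeness is essential here: $N_t(u)$ is an independent set, so for $v\in N_t(u)$ the event ``$v$ keeps color $c$'' depends on a block of Phase-I coins overlapping those of another neighbour only through the second neighbourhood, which is what makes the relevant first/second moments behave. \emph{(b) Concentration.} I would expose the Phase-I coins block by block (one block per vertex) and run a bounded-difference/Azuma argument: changing one block moves $\eta_{t+1}(u)$ and $s_{t+1}(u)$ by $O(1)$, and moves $\mathrm{Average}_{c}\,d_{t+1}(u,c)$ by only $O\!\big(\tfrac{1}{|S_{t+1}(u)|}\cdot(\text{number of affected }(c,v)\text{ pairs})\big)$, which is small because, again by triangle-freeness, one block can flip few such pairs. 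This gives failure probability $\exp(-\Delta^{\Omega(1)})$, and since $\Delta\ge\log^{1+\epsilon}n$ this is $o\big(1/(n\Delta\log\Delta)\big)$, enough for a union bound over all $u$, all $c$, and all $O(\log\Delta)$ rounds. It is crucial that the individual $d_{t+1}(u,c)$ need \emph{not} concentrate --- the $K_{\Delta,\Delta}$ example of Section~\ref{sec:sketch} shows they do not --- which is exactly why the invariant is the one-sided \eqref{eq:inv2_sk}-type statement about averages. \emph{(c) Phase~III bookkeeping.} Given that $\mathrm{Average}_{c\in S_{t+1}(u)}\,d_{t+1}(u,c)$ is close to $d_{t+1}$, Markov's inequality shows that deleting the colors with $d_{t+1}(u,c)\ge q\gamma d_{t+1}$ removes at most a $\tfrac1q\cdot\tfrac{1-q\alpha}{1-\alpha}$ fraction of the current palette; a short computation with $\alpha=1-|S_{t+1}(u)|/s_{t+1}$ (capped in $[0,1/q]$) then yields $s_{t+1}(u)\ge\tfrac{q-1}{q}s_{t+1}$ and $d_{t+1}(u,c)\le q\gamma d_{t+1}$ with $\gamma$ bounded --- that is, $\mathcal{I}_{t+1}$ --- so the invariant is self-propagating.

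With $\mathcal{I}_t$ at the termination of Stage~2, the deterministic sequences satisfy $\eta_t<\tfrac{q-1}{q}s_t$ with enough slack that the $o(1)$ errors do not close the gap, so every uncolored $u$ has $s_t(u)>\eta_t(u)$; hence its palette contains a color unused on any uncolored neighbour, and the greedy Stage~3 extends the partial coloring to a proper coloring of $G$. For the running time, each round costs $O(n\Delta^2)$ (dominated by recomputing all $d_{t+1}(u,c)$ and the $\Pr(\mathcal{F}_t(u,c))$), and there are $O(\log\Delta)$ rounds, giving $O(n\Delta^2\log\Delta)$; the $O(1/n)$ failure probability is the union bound over the $O(n\Delta\log\Delta)$ bad events of part~(b). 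I expect part~(b) for the averaged $d$-quantity to be the main obstacle: one must bound the Lipschitz constant of $\mathrm{Average}_{c}\,d_{t+1}(u,c)$ in the Phase-I coins tightly enough --- using triangle-freeness to limit how many $(c,v)$ contributions a single block can change --- that Azuma beats the union bound, while in part~(a) simultaneously verifying that the conditional expectation of that average really is $(1+o(1))d_{t+1}$ and is not distorted by the very second-neighbourhood correlations that destroy concentration of the individual $d_{t+1}(u,c)$.
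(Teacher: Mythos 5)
Your proposal follows essentially the same approach as the paper's own proof: a round-by-round induction maintaining a one-sided, $\alpha$-parameterized invariant on $\eta_t(u)$, $s_t(u)$, and the \emph{average} of $d_t(u,c)$ over the palette; expectations matched against the two-stage recurrences \eqref{eq:rec_stage_1}--\eqref{eq:rec_stage_2}; Azuma for concentration of $\tilde s_t(u)$, $\eta_{t+1}(u)$, and $\bar d_t(u)$ (this is exactly where the paper, like you, expends most effort --- Lemma~\ref{lem:tilde_a}); Markov's-inequality bookkeeping for Phase~III (the paper's Lemmas~\ref{lem:mu1}, \ref{lem:mu2} and \ref{lem:S}); and termination once $\eta_t < \frac{q-1}{q} s_t$ followed by greedy completion, with $O(n\Delta^2)$ work per round over $O(\log\Delta)$ rounds. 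Your identification of the key point --- that individual $d_t(u,c)$ need not concentrate (the $K_{\Delta,\Delta}$ example) while the palette-averaged quantity does, and that triangle-freeness is what keeps the $(c,v)$-overlap controllable --- matches the paper's reasoning precisely.
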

We need some lemmas to prove the Main Theorem and 
before that we need the following definition.
\begin{definition}
$d_{t}(v) = Average_{c \in S_{t}(v)} d_{t}(v,c)$\\
\end{definition}

\begin{lemma}[Main Lemma]
\label{lem:phase1}
Given a triangle-free graph $G$ on $n$ vertices with maximum degree 
$\Delta \geq \log^{1+\epsilon} n$ for a positive constant $\epsilon$, $q \geq 2$,
and $\psi > 1$, 
there are positive constants $\alpha_1$ and $\alpha_2$
such that for the sequences $(e_t)$ defined by
\begin{equation}
\label{eq:def_rece}
e_0 = 0, \mbox{\:\:\:} 
	e_{t+1} = \alpha_1(e_{t} + \sqrt{\frac{\psi}{d_{t}}}+ \sqrt{\frac{\psi}{s_{t}}}) \mbox{\;\; for } t > 0,
\end{equation}
and $(f_t)$ defined by
\begin{equation}
\label{eq:def_recf}
f_{t} = 1 - \alpha_2 t n^2 e^{-\psi},
\end{equation}
if $s_t \gg \psi$ and $e_t \ll 1$ at round $t$, then
$$\forall u \in V(G_t), \exists \alpha \in [0, 1/q], \forall c \in S_{t}(u),$$
\begin{align*}
s_t(u) &\geq (1-\alpha) s_{t}(1 - e_{t}) \\
d_{t}(u) &\leq \frac{1-q \alpha}{1- \alpha} d_{t}(1+ e_{t}) \\
d_{t}(u,c) &\leq q d_{t} (1+e_{t}) \\
\eta_{t}(u) &\leq \eta_{t}(1 + e_{t})
\end{align*}
with probability greater than $f_t$.
\end{lemma}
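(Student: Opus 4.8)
The plan is to prove the Main Lemma by induction on the round $t$, maintaining the four inequalities as the inductive invariant. The base case $t=0$ is trivial: all palettes have size exactly $s_0 = \Delta/k$, all degrees $\eta_0(u) \le \Delta$ and $d_0(u,c) \le \Delta$ match the recurrence values with $\alpha = 0$ and $e_0 = 0$. For the inductive step, I would assume the invariant holds at the start of round $t$ (this is the hypothesis "$s_t \gg \psi$ and $e_t \ll 1$ at round $t$" plus the structural conclusions, which should be read as supplied by the previous round's conclusion) and analyze how Phase I, Phase II, and Phase III of the round transform the random variables. The heart of the argument is: (i) compute the conditional expectation of each of $\eta_{t+1}(u)$, $s_{t+1}(u)$, $d_{t+1}(u,c)$, and the average $d_{t+1}(u)$, given the state at the beginning of round $t$; (ii) show each expectation is within a $(1 \pm O(e_t + \sqrt{\psi/d_t} + \sqrt{\psi/s_t}))$ factor of the deterministic recurrence value; (iii) prove concentration around these expectations with failure probability at most $O(n^2 e^{-\psi})$ per round, so that a union bound over $O(\Delta)$ vertices and colors and $t$ rounds gives the stated $f_t$.

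The expectation computations split by phase. For $s_{t+1}(u)$ after Phase II.1, a color $c \in S_t(u)$ survives iff no neighbor is assigned $c$; since assignments are independent with probability $\frac{1}{q^2}\frac{1}{d_t}$ per (neighbor, color) pair, $\Pr(\mathcal{F}_t(u,c)) = \prod_{v \in D_t(u,c)}(1 - \frac{1}{q^2 d_t})$, which by the inductive bound $d_t(u,c) \le q d_t(1+e_t)$ lies in the right window around $e^{-1/q}$. Phase II.2 then rescales each surviving color's retention probability to exactly $Desired\_\mathcal{F}_t = e^{-1/q}$, so $\mathbb{E}[s_{t+1}(u)]$ is close to $s_t(u) e^{-1/q}$; combined with the induction hypothesis this gives the $(1-\alpha)$ form, and Phase III only removes more colors — here the $\alpha$ bookkeeping (capping $\alpha$ at $1/q$, defining $\gamma$ via the Markov-type inequality) is what lets the palette-size lower bound and the average-degree upper bound coexist. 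For $\eta_{t+1}(u)$, a neighbor $v$ of $u$ gets permanently colored iff some color in its (post-Phase-II) palette is assigned to it and survives; the probability a given color is assigned is $\frac{1}{q^2 d_t}$ and it survives with probability $\ge Desired\_\mathcal{F}_t / $(something close to $Desired\_\mathcal{F}_t$), so the probability $v$ stays uncolored is close to $\prod_c (1 - \Theta(\frac{1}{q^2 d_t}))$ over $\approx s_t$ colors, i.e., $\approx e^{-s_t/(q^2 d_t)}$ — but since we are in the first stage with $d_t/s_t \ge 1/q^2$, the relevant quantity telescopes into the $(1 - \frac{q-1}{2q^3}e^{-1/q}\frac{s_t}{d_t})$ factor of the recurrence via $1 - x \approx e^{-x}$ estimates. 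For $d_{t+1}(u,c)$ and its average $d_{t+1}(u)$ one combines the survival of the color $c$ in neighbors' palettes with the survival of those neighbors as uncolored vertices; this is a product of two nearly-independent effects and is the messiest expectation.

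The main obstacle — and the whole reason the algorithm was redesigned relative to Grable et al. — is concentration of $d_{t+1}(u,c)$ for an individual color $c$. As the counterexample with $K_{\Delta,\Delta}$ shows, a single neighbor being colored $c$ can wipe out $D_{t+1}(u,c)$ entirely, so $d_{t+1}(u,c)$ is genuinely \emph{not} concentrated around its expectation on triangle-free graphs. The resolution, which I would carry out carefully, is to prove concentration only for the \emph{average} $d_{t+1}(u) = Average_{c \in S_t(u)} d_{t+1}(u,c)$: this average is a function of the (independent across (vertex, color) pairs) assignment indicators in round $t$, and changing one indicator moves the average by $O(1/s_t)$ times a bounded amount, so an Azuma/bounded-differences inequality (or Talagrand, handled in Section~\ref{sec:prelim}) gives deviation $O(\sqrt{\psi/s_t} + \sqrt{\psi/d_t})$ with probability $1 - O(e^{-\psi})$. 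The per-color upper bound $d_{t+1}(u,c) \le q d_t(1+e_t)$ is then \emph{not} obtained by concentration at all but is \emph{enforced} by Phase III's cleanup step together with the Markov-type inequality (a list of $s$ nonnegatives with average $D$ has at most $s/q$ entries exceeding $qD$), so that after cleanup every surviving color satisfies the bound by construction, at the cost of shrinking the palette by a factor absorbed into $\alpha$. The remaining variables $\eta_{t+1}(u)$ and $s_{t+1}(u)$ are concentrated by the same bounded-differences machinery since each depends on $O(\Delta)$ independent indicators each with influence $O(1)$. Finally I would assemble the error terms: the $\alpha_1$ in \eqref{eq:def_rece} is chosen large enough to dominate the multiplicative slack in the expectation estimates plus the additive concentration error, the $\alpha_2$ in \eqref{eq:def_recf} accounts for the $O(n^2)$ events (one per ordered pair for $d_{t+1}(u,c)$-type quantities) each failing with probability $O(e^{-\psi})$, and the hypotheses $s_t \gg \psi$, $e_t \ll 1$ guarantee the linearizations $1-x \approx e^{-x}$ and the concentration bounds are in their valid regime.
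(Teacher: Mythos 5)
Your proposal follows the same architecture as the paper's proof: induction on rounds, split into expectation estimates and Azuma-type concentration, with the crucial twist of proving concentration only for the average $d_{t+1}(u)$ and enforcing the per-color bound $d_{t+1}(u,c)\le q\gamma d_{t+1}$ by construction in Phase~III together with the Markov-type Lemmas~\ref{lem:mu1} and~\ref{lem:mu2}, absorbing the resulting palette shrinkage into $\alpha$; the role of Phase~II.2 in pinning the per-color survival probability near $Desired\_\mathcal{F}_t$, and the choice of $\alpha_1,\alpha_2$ via a union bound over $O(n^2)$ events and $t$ rounds, are exactly as in the paper.

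One technical point is worth flagging. For the concentration of $\bar d_t(u)$ the paper does \emph{not} expose the individual $(v,c)$ assignment indicators; it groups all assignments of a single color $c_i$ into one trial $T_i$, giving only $s_t(u)$ trials each with influence $O(d_t(u,c_i))$, whence $\sum\alpha_i^2 = O(s_t(u)\,d_t\,d_t(u))$ and a deviation of order $\sqrt{\psi d_t d_t(u)/s_t}$ after dividing by $\tilde s_t(u)$. If you instead martingalize over individual $(v,c)$ pairs as your sketch suggests, the per-indicator influence on $\tilde d_t(u)$ is not $O(1/s_t)$ times a bounded quantity: flipping a single assignment can flip whether a distance-two vertex $v$ is colored, which simultaneously removes $v$ from $\tilde D_t(u,c')$ for all $c'\in S_t(v)\cap S_t(u)$ — up to $\Theta(s_t)$ colors — so the effect on the average is $\Theta(1)$, and with $\Theta(\Delta^2)$ relevant indicators the bounded-differences sum is too large. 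The conclusion you want is still reachable, but only with the coarser per-color decomposition (or a Talagrand-type argument exploiting that a bad outcome is certified by few coordinates), so this is a place where the granularity of the trial sequence genuinely matters rather than being a free choice.
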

We will prove the Main Lemma in Section \ref{sec:details} and assume it in this section.
Using it we can immediately conclude the following.
\begin{corollary}
\label{cor:main_lemma}
Given the setup of the Main Lemma(Lemma \ref{lem:phase1}),
if $s_t \gg \psi$ and $e_t \ll 1$ at round $t$, then $\forall u \in V(G_t)$,
\begin{align*}
s_t(u) &\geq \frac {q-1}q s_t(1-e_t) \\
d_{t}(u) &\leq d_{t}(1+ e_{t})
\end{align*}
with probability greater than $f_t$.
\end{corollary}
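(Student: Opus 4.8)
The plan is to derive the Corollary directly from the Main Lemma by observing that the only unknown in the Main Lemma's conclusion, the parameter $\alpha$, is confined to the interval $[0,1/q]$, and that the two functions of $\alpha$ appearing as coefficients there, namely $1-\alpha$ and $\tfrac{1-q\alpha}{1-\alpha}$, are monotone over that interval in the direction we need. First I would invoke Lemma \ref{lem:phase1} with the same $q$, $\psi$, $\Delta$, and round $t$: under the hypotheses $s_t \gg \psi$ and $e_t \ll 1$, it produces, for every $u \in V(G_t)$, a single value $\alpha = \alpha(u) \in [0,1/q]$ for which all four displayed inequalities hold simultaneously on one event of probability greater than $f_t$. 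Everything below is carried out on that event, so the final probability bound is inherited verbatim; no union bound or extra loss is incurred.

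Next I would handle the palette bound. Since $\alpha \le 1/q$ we have $1-\alpha \ge 1 - \tfrac1q = \tfrac{q-1}{q}$, and since $1-e_t$ may be taken positive (as $e_t \ll 1$) we get
\begin{equation*}
s_t(u) \;\ge\; (1-\alpha)\,s_t(1-e_t) \;\ge\; \frac{q-1}{q}\,s_t(1-e_t),
\end{equation*}
which is the first assertion of the Corollary. For the second, I would argue that the coefficient $\tfrac{1-q\alpha}{1-\alpha}$ lies in $[0,1]$: because $\alpha \le 1/q$ the numerator satisfies $1-q\alpha \ge 0$, because $\alpha \le 1/q < 1$ the denominator satisfies $1-\alpha > 0$, and the inequality $\tfrac{1-q\alpha}{1-\alpha} \le 1$ is equivalent to $1-q\alpha \le 1-\alpha$, i.e.\ to $(q-1)\alpha \ge 0$, which holds since $q \ge 2 > 1$ and $\alpha \ge 0$. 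Hence
\begin{equation*}
d_t(u) \;\le\; \frac{1-q\alpha}{1-\alpha}\,d_t(1+e_t) \;\le\; d_t(1+e_t),
\end{equation*}
the second assertion. Combining the two on the common event of probability greater than $f_t$ completes the argument.

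I do not expect any genuine obstacle here: the work is entirely a matter of substituting the endpoint bounds on $\alpha$ into the Main Lemma's conclusion, and the only point requiring a line of care is checking that $\tfrac{1-q\alpha}{1-\alpha}$ is well-defined and bounded by $1$ on $[0,1/q]$, which reduces to the trivial inequality $(q-1)\alpha \ge 0$. The substantive content — producing $\alpha$ and establishing the four inequalities with the stated probability — is exactly what the Main Lemma supplies and is deferred to Section \ref{sec:details}.
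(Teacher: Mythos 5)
Your proposal is correct and matches the route the paper intends: the Corollary is stated as an immediate consequence of the Main Lemma, and your argument simply substitutes the endpoint bounds $1-\alpha \ge \tfrac{q-1}{q}$ and $\tfrac{1-q\alpha}{1-\alpha} \le 1$ valid for $\alpha \in [0,1/q]$, with the probability bound inherited from the single event the Main Lemma provides.
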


\begin{lemma}
\label{lem:t_stage_1}
The first stage finishes in $O(k)$ rounds.
\end{lemma}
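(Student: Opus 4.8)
The plan is to track the quantity $r_t := d_t/s_t$ across the first stage and show it decreases by a constant amount each round, so that the stopping condition $r_t < 1/q^2$ is reached after $O(k)$ rounds. First I would compute $r_{t+1}$ from the first-stage recurrences \eqref{eq:rec_stage_1}: dividing the recurrence for $d_{t+1}$ by that for $s_{t+1}$, the common factor $e^{-1/q}$ cancels, leaving
\begin{equation*}
r_{t+1} = \frac{d_{t+1}}{s_{t+1}} = \frac{d_t}{s_t}\Bigl(1 - \frac{q-1}{2q^3} e^{-1/q}\,\frac{s_t}{d_t}\Bigr) = r_t - \frac{q-1}{2q^3} e^{-1/q}.
\end{equation*}
Thus $r_t$ is an arithmetic progression: $r_t = r_0 - t\cdot\beta$ where $\beta := \frac{q-1}{2q^3} e^{-1/q}$ is a positive constant depending only on $q$, and $r_0 = d_0/s_0 = \Delta/(\Delta/k) = k$.

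Next I would read off the number of rounds. The loop runs while $r_t \geq 1/q^2$, i.e. while $k - t\beta \geq 1/q^2$, which fails once $t > (k - 1/q^2)/\beta$. Hence the first stage terminates after at most $\lceil (k - 1/q^2)/\beta \rceil = O(k)$ rounds, since $\beta$ and $q$ are absolute constants (for the Main Theorem, $q = 7$). This is essentially the same observation already made informally in Section \ref{sec:sketch}, where the identity $d_{t+1}/s_{t+1} = d_t/s_t - c_1$ was noted; here $c_1$ has been pinned down to the explicit constant $\beta$.

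One point that needs a remark rather than real work: the recurrences \eqref{eq:rec_stage_1} are deterministic sequences, so this lemma is a purely arithmetic fact about the defined constants $(d_t)$, $(s_t)$ — it does not depend on the probabilistic behaviour of the algorithm or on the Main Lemma. The only mild subtlety is ensuring the recurrences are well-defined throughout the stage, i.e. that the factor $1 - \beta\, s_t/d_t = 1 - \beta/r_t$ stays positive so that $d_t, s_t$ remain positive; since $r_t \geq 1/q^2$ for every round the loop actually executes, we have $\beta/r_t \leq \beta q^2 = \frac{q-1}{2q} e^{-1/q} < 1$, so positivity is automatic and there is no obstacle. The proof is therefore short; the "hard part" is merely bookkeeping the constants, and I would state the bound as $O(k)$ rounds with the implied constant $1/\beta + 1$.
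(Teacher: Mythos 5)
Your proof is correct and follows essentially the same route as the paper: divide the recurrences to show $d_{t+1}/s_{t+1} = d_t/s_t - \frac{q-1}{2q^3}e^{-1/q}$, then conclude from $d_0/s_0 = k$ that the threshold $1/q^2$ is reached in $O(k)$ rounds. The only addition beyond the paper's proof is your sanity check that the multiplicative factor stays positive, which is a harmless extra.
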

\begin{proof}
By the definition of sequences $(d_t)$ and $(s_t)$ in equation \eqref{eq:rec_stage_1}, we have
\begin{align*}
\frac{d_{t+1}}{s_{t+1}} &= \frac{d_{t}}{s_{t}} ( 1 - \frac {q-1} {2q^3} \frac {s_{t}}{d_{t}} e^{-1/q}) \\
				&= \frac{d_{t}}{s_{t}} - \frac {q-1} {2q^3} e^{-1/q}.
\end{align*}
Since $\frac {d_{0}}{s_{0}} = k$ we get $\frac {d_{t_1}}{s_{t_1}} \leq \frac 1 {q^2}$ for some round $t_1 = O(k)$.
\qed
\end{proof}
Let
$$t_1 = O(k)$$
be the last round of the first stage. 
Then the following lemma is a straightforward application of equation \eqref{eq:rec_stage_1}.
\begin{lemma}
\label{lem:s_t1}
\begin{equation*}
s_{t_1} = \frac {\Delta} k e^{-O(k)}
\mbox{   and   }  
\exists c > 0 \mbox{ such that if } k \leq c \log \Delta, \mbox{ then } s_{t_1} \gg 1.
\end{equation*}
\end{lemma}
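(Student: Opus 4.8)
The plan is to solve the recurrence for $(s_t)$ in closed form during the first stage and then substitute the bound on the stage length coming from Lemma~\ref{lem:t_stage_1}.

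First I would note that in the first stage the recurrence~\eqref{eq:rec_stage_1} gives $s_{t+1} = s_t e^{-1/q}$ with $s_0 = \Delta/k$, so a one-line induction yields $s_t = (\Delta/k)\,e^{-t/q}$ for every round $t$ of the first stage. Since Lemma~\ref{lem:t_stage_1} shows the first stage ends at a round $t_1 = O(k)$, and $q$ is a fixed constant, we have $t_1/q = O(k)$ and hence
$$s_{t_1} = \frac{\Delta}{k}\,e^{-t_1/q} = \frac{\Delta}{k}\,e^{-O(k)},$$
which is the first claim.

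For the second claim I would make the $O(k)$ bound explicit from the proof of Lemma~\ref{lem:t_stage_1}: there the ratio $d_t/s_t$ starts at $k$ and decreases by the fixed positive amount $\frac{q-1}{2q^3}e^{-1/q}$ each round until it drops below $1/q^2$, so $t_1 \le Ck$ for the constant $C := \frac{2q^3}{q-1}e^{1/q} + 1$ (the extra $1$ absorbs rounding, and dropping the additive $1/q^2$ only helps). Consequently $s_{t_1} \ge (\Delta/k)\,e^{-Ck/q}$. Now put $c := q/(2C)$. If $k \le c\log\Delta$ then $Ck/q \le \frac12\log\Delta$, so $e^{-Ck/q} \ge \Delta^{-1/2}$ and
$$s_{t_1} \ge \frac{\Delta^{1/2}}{k} \ge \frac{\Delta^{1/2}}{c\log\Delta}.$$
Since the hypothesis $\Delta \ge \log^{1+\epsilon} n$ forces $\Delta \to \infty$ as $n \to \infty$, the right-hand side tends to infinity, i.e.\ $s_{t_1} \gg 1$.

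I do not expect any real obstacle. The recurrence for $s_t$ decouples from those for $d_t$ and $\eta_t$ in the first stage, so solving it is immediate; the only mildly delicate point is keeping the constant $C$ in the $O(k)$ bound of Lemma~\ref{lem:t_stage_1} explicit enough that $c$ can be chosen small enough for the factor $e^{-Ck/q}$ not to swamp the $\Delta$ in the numerator — which is precisely why the statement needs the hypothesis $k \le c\log\Delta$.
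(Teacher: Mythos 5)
Your proof is correct and is essentially the argument the paper has in mind: the paper supplies no explicit proof, simply remarking that the lemma is ``a straightforward application of equation \eqref{eq:rec_stage_1},'' and your derivation ($s_t = (\Delta/k)e^{-t/q}$, substitute $t_1 = O(k)$, then make the constant in $O(k)$ explicit to choose $c$) is exactly that computation carried out carefully. The one thing you add that the paper leaves implicit is pinning down an explicit $C$ from Lemma~\ref{lem:t_stage_1} and deriving $c$ from it, which is a reasonable way to justify the existential claim.
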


\subsection{The Second Stage: Controlling the ratio of available colors to uncolored neighbors.}

We must show that $\eta_t$
decreases significantly faster than $s_t$ in the second stage.
Let
$$ \rho = 1 - \frac {q-2}{2q^3}.$$

\begin{lemma}
\label{lem:n_stage_2}
$$\eta_{t_1 + t} < \eta_{t_1} \rho^t$$
\end{lemma}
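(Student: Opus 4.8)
The plan is to show that in every round $t'$ of the second stage, the multiplicative factor by which $\eta$ shrinks is at most $\rho = 1 - \frac{q-2}{2q^3}$. From the second-stage recurrences in~\eqref{eq:rec_stage_2} we have
\begin{equation*}
\eta_{t'+1} = \eta_{t'}\Bigl(1 - \frac{q-1}{2q^3} e^{-\frac 1q \frac{d_{t'}}{s_{t'}}}\Bigr),
\end{equation*}
so it suffices to prove that $\frac{q-1}{2q^3} e^{-\frac 1q \frac{d_{t'}}{s_{t'}}} \ge \frac{q-2}{2q^3}$, i.e. that $e^{-\frac 1q \frac{d_{t'}}{s_{t'}}} \ge \frac{q-2}{q-1}$, throughout the second stage. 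Taking logarithms, this is equivalent to $\frac{d_{t'}}{s_{t'}} \le q\log\frac{q-1}{q-2}$. Once this pointwise bound is in hand, iterating gives $\eta_{t_1+t} \le \eta_{t_1}\rho^t$, which is the claim (with the strict inequality coming from the fact that the first factor is strictly less than $1$, or simply by noting the statement is an upper bound).

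The substance of the argument is therefore to control the ratio $d_{t'}/s_{t'}$ during the second stage. First I would record that the second stage begins, by the termination condition of the first stage (Lemma~\ref{lem:t_stage_1}), with $d_{t_1}/s_{t_1} \le 1/q^2 < q\log\frac{q-1}{q-2}$ for $q$ large (in particular $q=7$, but also for all $q\ge 2$ the needed inequality $1/q^2 \le q\log\frac{q-1}{q-2}$ can be checked directly). Next I would show the ratio does not increase as the second stage proceeds: from~\eqref{eq:rec_stage_2},
\begin{equation*}
\frac{d_{t'+1}}{s_{t'+1}} = \frac{d_{t'}}{s_{t'}}\Bigl(1 - \frac{q-1}{2q^3} e^{-\frac 1q \frac{d_{t'}}{s_{t'}}}\Bigr) \le \frac{d_{t'}}{s_{t'}},
\end{equation*}
since the parenthesized factor lies in $(0,1)$. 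Hence $d_{t'}/s_{t'}$ is monotonically non-increasing in the second stage and stays at most $d_{t_1}/s_{t_1} \le 1/q^2$, which is comfortably below the threshold $q\log\frac{q-1}{q-2}$. This makes $e^{-\frac 1q \frac{d_{t'}}{s_{t'}}} \ge e^{-1/q^3} \ge \frac{q-2}{q-1}$ (again a direct check for $q\ge 2$), giving the per-round bound.

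The main obstacle is not deep but bookkeeping: one must be careful that the recurrences~\eqref{eq:rec_stage_2} describe the deterministic scalar sequences $(\eta_t),(d_t),(s_t)$ — not the random quantities $\eta_t(u)$ etc. — so the lemma is a purely arithmetic statement about these sequences and no concentration input is needed here. The only real content is verifying the elementary inequality $e^{-x} \ge \frac{q-2}{q-1}$ for all $x \in [0, 1/q^2]$ and $q\ge 2$; since $e^{-1/q^2} \ge 1 - 1/q^2 = \frac{q^2-1}{q^2} = \frac{(q-1)(q+1)}{q^2}$ and one checks $\frac{(q-1)(q+1)}{q^2} \ge \frac{q-2}{q-1}$ reduces to $(q-1)^2(q+1) \ge (q-2)q^2$, i.e. $q^3 - q^2 - q + 1 \ge q^3 - 2q^2$, i.e. $q^2 - q + 1 \ge 0$, which always holds. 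Substituting back yields $1 - \frac{q-1}{2q^3}e^{-x} \le 1 - \frac{q-1}{2q^3}\cdot\frac{q-2}{q-1} = \rho$, and the induction on $t$ completes the proof.
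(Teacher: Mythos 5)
Your proof is correct and follows essentially the same route as the paper's: both rest on $e^{-x} \geq 1-x$ and the fact that $d_{t'}/s_{t'}$ stays small throughout the second stage. The one small improvement you make is explicitly noting that $d_{t'}/s_{t'}$ is non-increasing (so stays below $1/q^2$), which the paper uses implicitly when it invokes $d_t/s_t < 1$ without justification.
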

\begin{proof}
By the definition of sequence $(\eta_t)$ in equation \eqref{eq:rec_stage_2}, we have
\begin{align*}
\eta_{t+1}	&= \eta_{t} (1-\frac {q-1} {2q^3} e^{-\frac 1 {q} \frac {d_{t}}{s_{t}}}) & \\
		&< \eta_{t} (1 - \frac {q-1} {2q^3}(1 - \frac 1 {q} \frac{d_t}{s_t}))  
			& \langle e^x \geq 1 + x \rangle \\
		&< \eta_{t} (1 - \frac {q-1} {2q^3}(1 - \frac 1 {q}))  
			& \langle \frac{d_{t}}{s_{t}} < 1 \rangle \\
		&< \eta_{t} (1 - \frac {q-1} {2q^3} + \frac 1 {2q^3}) \\
		&= \eta_{t} (1 - \frac {q-2} {2q^3}) \\
		&= \eta_{t} \rho.
\end{align*}
Thus $\eta_{t_1+t} < \eta_{t_1} \rho^{t}.$
\qed
\end{proof}
We now study the ratio $d_t  / s_t$, which appears in the recursive equation 
\eqref{eq:rec_stage_2} for the sequences $(\eta_t)$, $(d_t)$, and $(s_t)$.
\begin{lemma}
\label{lem:ratio_stage_2}
$$\frac{d_{t_1+t}}{s_{t_1+t}} < \frac 1{q^2} \rho^{t}$$
\end{lemma}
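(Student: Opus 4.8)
The plan is to track the ratio $r_t := d_{t_1+t}/s_{t_1+t}$ directly from the second-stage recurrences in~\eqref{eq:rec_stage_2}, exactly as was done for the first stage in Lemma~\ref{lem:t_stage_1}. Writing out $d_{t+1}$ and $s_{t+1}$ from~\eqref{eq:rec_stage_2} and dividing, the common factor $e^{-\frac1q \frac{d_t}{s_t}}$ in $d_{t+1}$ and $s_{t+1}$ cancels, leaving
\begin{equation*}
\frac{d_{t+1}}{s_{t+1}} = \frac{d_t}{s_t}\Bigl(1 - \frac{q-1}{2q^3} e^{-\frac1q \frac{d_t}{s_t}}\Bigr).
\end{equation*}
So $r_{t}$ is multiplied each round by a factor of the form $1 - \frac{q-1}{2q^3}e^{-\frac1q r_t}$, which is strictly less than $1$ and, crucially, \emph{decreasing} is not automatic — the factor depends on $r_t$ itself.

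The first step is the base case: at the start of the second stage, $t=0$, Lemma~\ref{lem:t_stage_1} (equivalently the stopping condition of the first stage) gives $d_{t_1}/s_{t_1} \le 1/q^2$, so $r_0 \le 1/q^2 = \frac{1}{q^2}\rho^0$, which is the claim for $t=0$. For the inductive step, suppose $r_t \le \frac1{q^2}\rho^t$; in particular $r_t < 1$, so $e^{-\frac1q r_t} > e^{-1/q}$, and hence the multiplicative factor satisfies
\begin{equation*}
1 - \frac{q-1}{2q^3} e^{-\frac1q r_t} < 1 - \frac{q-1}{2q^3} e^{-1/q}.
\end{equation*}
Here I would want to bound this by $\rho = 1 - \frac{q-2}{2q^3}$. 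Since $e^{-1/q} \ge 1 - 1/q = \frac{q-1}{q}$, we get $\frac{q-1}{2q^3}e^{-1/q} \ge \frac{(q-1)^2}{2q^4}$; comparing $\frac{(q-1)^2}{2q^4}$ with $\frac{q-2}{2q^3} = \frac{q(q-2)}{2q^4}$ and noting $(q-1)^2 = q^2-2q+1 \ge q^2-2q = q(q-2)$, we conclude $\frac{q-1}{2q^3}e^{-1/q} \ge \frac{q-2}{2q^3}$, i.e. the factor is at most $\rho$. Therefore $r_{t+1} = r_t \cdot (\text{factor}) < \frac1{q^2}\rho^t \cdot \rho = \frac1{q^2}\rho^{t+1}$, completing the induction. (This also exactly parallels the computation in the proof of Lemma~\ref{lem:n_stage_2}, where the same inequalities $e^x \ge 1+x$ and $d_t/s_t < 1$ were used to bound the $\eta$-recurrence by $\rho$.)

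The only mild subtlety — and the step I would be most careful about — is the direction of the monotonicity argument: one must use $r_t < 1$ to lower-bound $e^{-\frac1q r_t}$ by $e^{-1/q}$ and hence \emph{upper}-bound the multiplicative factor by $\rho$, rather than the reverse. That $r_t < 1$ holds throughout the second stage follows from the induction hypothesis together with $\rho < 1$ and $1/q^2 < 1$, so there is no circularity. Everything else is the routine algebra sketched above, and no concentration or probabilistic input is needed here — the lemma is purely a statement about the deterministic control sequences.
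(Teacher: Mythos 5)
Your proposal is correct, and the core computation is the same one the paper uses: reduce to the per-round multiplicative factor $1 - \frac{q-1}{2q^3}e^{-\frac1q \frac{d_t}{s_t}}$, use $d_t/s_t < 1$ together with $e^{-x} \geq 1-x$ to bound it above by $\rho$, and iterate from the base case $d_{t_1}/s_{t_1} < 1/q^2$. The only difference is organizational: the paper notices that this multiplicative factor is \emph{literally} the same expression as $\eta_{t+1}/\eta_t$, so it cites Lemma~\ref{lem:n_stage_2} as a black box and telescopes, whereas you inline the proof of Lemma~\ref{lem:n_stage_2} into an explicit induction on $r_t$. Both are fine; the paper's route is a couple of lines shorter, while yours makes the dependence on $r_t < 1$ (and the absence of circularity, since the factor is always $<1$) a bit more explicit.
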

\begin{proof}
By the definition of sequences $(d_t)$ and $(s_t)$ in equation \eqref{eq:rec_stage_2}, we have
\begin{equation*}
\frac{d_{t+1}}{s_{t+1}} = \frac{d_{t}}{s_{t}} (1-\frac {q-1} {2q^3} e^{-\frac 1 {q} \frac {d_{t}}{s_{t}}}).
\end{equation*}
Since the ratio of $d_{t+1}/s_{t+1}$ to $d_{t}/s_{t}$
is the same as the ratio of $\eta_{t+1}$ to $\eta_t$,
we use Lemma \ref{lem:n_stage_2} to conclude that
\begin{equation*}
\frac{d_{t_1+t}}{s_{t_1+t}} < \frac{d_{t_1}}{s_{t_1}} \rho^t < \frac 1{q^2} \rho^{t}.
\end{equation*}
\qed
\end{proof}

\begin{lemma}
\label{lem:s_stage_2}
If $q$ is greater than $2$, then
$$s_{t_1+t} \geq s_{t_1} (1 - \frac {4} {q-2}).$$
\end{lemma}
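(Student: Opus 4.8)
The plan is to bound the product $\prod_{i=0}^{t-1} e^{-\frac{1}{q}\frac{d_{t_1+i}}{s_{t_1+i}}}$ from below, since by the recursion in equation \eqref{eq:rec_stage_2} we have $s_{t_1+t} = s_{t_1}\prod_{i=0}^{t-1} e^{-\frac{1}{q}\frac{d_{t_1+i}}{s_{t_1+i}}}$. Taking logarithms, it suffices to show that $\sum_{i=0}^{t-1}\frac{1}{q}\frac{d_{t_1+i}}{s_{t_1+i}} \leq \frac{4}{q-2}$ (or something of that order), because then $s_{t_1+t} \geq s_{t_1} e^{-4/(q-2)} \geq s_{t_1}(1-\frac{4}{q-2})$ by $e^{-x}\geq 1-x$. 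So the whole statement reduces to a bound on a geometric-type sum of the ratios $d_{t_1+i}/s_{t_1+i}$.

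This is exactly where Lemma \ref{lem:ratio_stage_2} comes in: it gives $\frac{d_{t_1+i}}{s_{t_1+i}} < \frac{1}{q^2}\rho^i$ with $\rho = 1 - \frac{q-2}{2q^3} < 1$. Therefore
\begin{equation*}
\sum_{i=0}^{t-1}\frac{1}{q}\frac{d_{t_1+i}}{s_{t_1+i}} < \frac{1}{q}\sum_{i=0}^{\infty}\frac{1}{q^2}\rho^i = \frac{1}{q^3}\cdot\frac{1}{1-\rho} = \frac{1}{q^3}\cdot\frac{2q^3}{q-2} = \frac{2}{q-2}.
\end{equation*}
Then $s_{t_1+t} = s_{t_1}\exp\!\left(-\sum_{i=0}^{t-1}\frac{1}{q}\frac{d_{t_1+i}}{s_{t_1+i}}\right) > s_{t_1}e^{-2/(q-2)} > s_{t_1}(1 - \frac{2}{q-2}) \geq s_{t_1}(1-\frac{4}{q-2})$, so in fact the claimed bound has room to spare; I would just carry the constant $4$ to match the statement (perhaps the author wants slack for the $q>2$ hypothesis to keep $\rho \in (0,1)$, or uses a cruder intermediate bound).

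I do not expect a real obstacle here — the only things to be careful about are: (i) confirming $\rho \in (0,1)$ when $q > 2$, so that $\sum \rho^i$ converges to $\frac{1}{1-\rho}$; (ii) being explicit that the partial sum up to $t-1$ is bounded by the full geometric series, so the bound is uniform in $t$; and (iii) the elementary inequality $e^{-x} \geq 1-x$ at the last step. The mild subtlety is purely bookkeeping: choosing whether to present the tight constant $2$ or inflate to $4$ to agree with the lemma as stated; I would state the computation with constant $2$ and then note $\frac{2}{q-2} \leq \frac{4}{q-2}$, which is what the lemma asserts.
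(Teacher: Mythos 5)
Your proof is correct and follows essentially the same route as the paper: iterate the recursion for $s_t$, invoke Lemma~\ref{lem:ratio_stage_2} to bound $d_{t_1+i}/s_{t_1+i}$, and sum the geometric series $\sum \rho^i = 1/(1-\rho) = 2q^3/(q-2)$. The only difference is that the paper first weakens $e^{-x}$ to $1-x$ and then converts back via $1-x \geq e^{-2x}$ (which is where the constant $4$ instead of $2$ comes from and where the hypothesis $q>2$ is used to keep $x$ small), whereas you keep the product in exponential form throughout and thereby obtain the sharper constant $2/(q-2)$ directly, noting at the end that $2/(q-2) \leq 4/(q-2)$ matches the stated bound.
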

\begin{proof}
By the definition of sequence $(s_t)$ in equation \eqref{eq:rec_stage_2}, we have
\begin{align*}
s_{t+1} &= s_{t} e^{-\frac 1 {q} \frac {d_{t}}{s_{t}}} \\
	&\geq s_{t} e^{- \frac 1 {q} \frac {d_{t}}{s_{t}}} \\
	&\geq s_{t} (1 - \frac 1 {q} \frac {d_{t}}{s_{t}}).
\end{align*}
Thus
\begin{align*}
s_{t_1+t} &\geq s_{t_1} \prod_{i=0}^{t} (1 - \frac 1 {q} \frac {d_{t}}{s_{t}}) & \\
	&\geq s_{t_1} \prod_{i=0}^{t} (1- \frac 1 {q^3} \rho^i) 
		& \langle \mbox{Lemma \ref{lem:ratio_stage_2}}\rangle \\
	&\geq s_{t_1} exp(- \frac 2 {q^3} \sum_{i=0}^{t} \rho^i) 
		& \langle \mbox{since } q > 2 \rangle\\
	&\geq s_{t_1} exp(- \frac 2 {q^3} \frac 1 {1-\rho})  
		& \langle \mbox{sum of geometric series}\rangle \\
	&= s_{t_1} exp(- \frac {4q^3} {q^3(q-2)}) & \\
	&= s_{t_1} exp(- \frac {4} {q-2}) & \\
	&\geq s_{t_1} (1 - \frac {4} {q-2}). 
\end{align*}
\qed
\end{proof}

\begin{lemma}
\label{lem:t_stage_2}
For any $\delta > 0$, there is a $c_{\delta} > 0$ such that if $q > 6$,
$k \leq c_{\delta} \log \Delta$, and $\Delta \gg 1$,
then the second stage takes at most $\delta \log \Delta$ rounds.
\end{lemma}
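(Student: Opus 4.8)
The plan is to bound the number of second-stage rounds by tracking when the termination condition $\eta_t < \frac{q-1}{q} s_t$ is reached, using the exponential decay of $\eta_t$ from Lemma~\ref{lem:n_stage_2} together with the near-stability of $s_t$ from Lemma~\ref{lem:s_stage_2}. Write $T = \delta \log \Delta$ for the target bound on the number of rounds. First I would recall that at the start of the second stage we have the trivial bounds $\eta_{t_1} \leq \Delta$ and, from Lemma~\ref{lem:ratio_stage_2} applied at $t=0$, $d_{t_1}/s_{t_1} < 1/q^2$, so in particular $s_{t_1} > q^2 d_{t_1} \geq$ (something), but more usefully I would track the ratio $\eta_t / s_t$ directly. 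By Lemma~\ref{lem:n_stage_2}, $\eta_{t_1+t} < \eta_{t_1}\rho^t$, and by Lemma~\ref{lem:s_stage_2}, $s_{t_1+t} \geq s_{t_1}(1 - \tfrac{4}{q-2})$; since $q > 6$ the factor $1 - \tfrac{4}{q-2}$ is a positive constant bounded away from $0$. Hence
\[
\frac{\eta_{t_1+t}}{s_{t_1+t}} < \frac{\eta_{t_1}}{s_{t_1}} \cdot \frac{\rho^t}{1 - \frac{4}{q-2}}.
\]

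The next step is to estimate the initial ratio $\eta_{t_1}/s_{t_1}$. Here I would invoke Lemma~\ref{lem:s_t1}, which gives $s_{t_1} = \frac{\Delta}{k} e^{-O(k)}$, and note $\eta_{t_1} \leq \Delta$ at all times; therefore $\eta_{t_1}/s_{t_1} \leq k e^{O(k)}$. With $k \leq c_\delta \log \Delta$ this is at most $\exp(O(c_\delta \log \Delta)) = \Delta^{O(c_\delta)}$. So the second stage terminates once $\rho^t$ has shrunk below $\frac{q-1}{q}(1 - \tfrac{4}{q-2})/\Delta^{O(c_\delta)}$, i.e.\ once
\[
t \geq \frac{O(c_\delta)\log\Delta + O(1)}{\log(1/\rho)}.
\]
Since $\rho = 1 - \tfrac{q-2}{2q^3}$ is a fixed constant in $(0,1)$ (for $q > 6$), $\log(1/\rho)$ is a fixed positive constant, so the number of rounds is at most $(O(c_\delta)/\log(1/\rho))\log\Delta + O(1)$. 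Choosing $c_\delta$ small enough that $O(c_\delta)/\log(1/\rho) < \delta/2$, say, and using $\Delta \gg 1$ to absorb the additive $O(1)$ term into the remaining $\delta/2 \cdot \log\Delta$, gives the claimed bound of $\delta \log\Delta$ rounds.

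The one point that needs care — and which I expect to be the main obstacle — is that the bounds $\eta_t \leq \Delta$ and $s_{t_1} = \tfrac{\Delta}{k} e^{-O(k)}$ are the \emph{idealized} recurrence values, whereas the algorithm's termination test compares the \emph{actual} quantities $\eta_t(u)$ and $s_t(u)$ at vertices. One must ensure that the concentration guarantees (Corollary~\ref{cor:main_lemma}: $s_t(u) \geq \tfrac{q-1}{q}s_t(1-e_t)$ and, implicitly via $\eta_t(u) \leq \eta_t(1+e_t)$) still hold throughout the second stage, which requires the hypotheses $s_t \gg \psi$ and $e_t \ll 1$ to persist for all $\delta\log\Delta$ rounds; this is where $k \leq c_\delta \log\Delta$ and $\Delta \gg 1$ are genuinely used, since they keep $s_t$ large (by Lemmas~\ref{lem:s_t1} and~\ref{lem:s_stage_2}) and keep the accumulated error $e_t$ small over the bounded number of rounds. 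Modulo that bookkeeping, the argument is just the elementary estimate above: a quantity decaying geometrically by a constant factor per round, starting from $\mathrm{poly}(\Delta)$, reaches a constant threshold in $O(\log\Delta)$ rounds, with the hidden constant tunable below any prescribed $\delta$ by shrinking $c_\delta$.
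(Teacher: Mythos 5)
Your proposal is correct and follows essentially the same line as the paper's proof: Lemma~\ref{lem:n_stage_2} for the geometric decay of $\eta_{t_1+t}$, Lemma~\ref{lem:s_stage_2} (with $q>6$) to keep $s_{t_1+t}$ within a constant factor of $s_{t_1}$, Lemma~\ref{lem:s_t1} to bound the initial ratio $\eta_{t_1}/s_{t_1}\le k e^{O(k)}$, and then the choice $k\le c_\delta\log\Delta$ to make this $\Delta^{O(c_\delta)}$ so that $O(\log\Delta)$ rounds of decay by the fixed factor $\rho$ suffice, with the constant tunable below $\delta$ by shrinking $c_\delta$. You merely spell out the computation the paper compresses into ``straightforward.'' One small clarification: the stopping rule $\eta_t < \frac{q-1}{q}s_t$ in the algorithm is stated in terms of the deterministic recurrence sequences $(\eta_t),(s_t)$, not the random quantities $\eta_t(u),s_t(u)$, so the concentration caveat you raise at the end, while sensible to keep in mind globally, is not actually needed for this particular lemma.
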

\begin{proof}
By Lemma \ref{lem:s_t1}, we have
$$s_{t_1} = \frac{\Delta} k e^{-O(k)}.$$
Since $q > 6$, the above equation and Lemma \ref{lem:s_stage_2} imply that 
$$s_{t_1+t} = \Omega(\frac{\Delta} k e^{-O(k)})$$ 
for all $t$.
Also, by Lemma \ref{lem:n_stage_2}, we have
$$\eta_{t_1+t} < \Delta \rho^t.$$
Since $\Delta \gg 1$, given a $\delta$ it is straightforward to find a $c_{\delta} > 0$
so that if $k \leq c_{\delta} \log \Delta$,
then after $\delta \log \Delta$ rounds $\eta_t < \frac{q-1}q s_t.$ 
\qed
\end{proof}

\subsection{Bounding the Error Estimate in all Concentration Inequalities}
Now we look at $d_{t}$, which is used to bound the error term $e_{t}$. Let
$$\mu = 1 - \frac 1 {2q^2}.$$
\begin{lemma}
\label{lem:d_stage_2}
$$d_{t_1+t} \geq d_{t_1} (1 - \frac {4}{q-2}) \mu^{t}$$
\end{lemma}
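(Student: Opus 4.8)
The plan is to mimic the analysis of Lemma~\ref{lem:s_stage_2}, but now tracking $d_t$ instead of $s_t$. Recall from equation \eqref{eq:rec_stage_2} that
$$d_{t+1} = d_t \Bigl(1 - \tfrac{q-1}{2q^3} e^{-\frac1q \frac{d_t}{s_t}}\Bigr) e^{-\frac1q \frac{d_t}{s_t}}.$$
The first factor $\bigl(1 - \tfrac{q-1}{2q^3} e^{-\frac1q \frac{d_t}{s_t}}\bigr)$ is exactly the ratio $\eta_{t+1}/\eta_t$, which by the proof of Lemma~\ref{lem:n_stage_2} is at least $\rho = 1 - \tfrac{q-2}{2q^3}$ at every round of the second stage. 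So the first step is to write $d_{t_1+t} \geq d_{t_1}\,\rho^t \prod_{i=0}^{t-1} e^{-\frac1q \frac{d_{t_1+i}}{s_{t_1+i}}}$, isolating the $\rho^t$ factor that we want in the conclusion.

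The second step is to bound the remaining product $\prod_{i=0}^{t-1} e^{-\frac1q \frac{d_{t_1+i}}{s_{t_1+i}}} = \exp\bigl(-\tfrac1q \sum_{i=0}^{t-1} \tfrac{d_{t_1+i}}{s_{t_1+i}}\bigr)$ from below. By Lemma~\ref{lem:ratio_stage_2} we have $\tfrac{d_{t_1+i}}{s_{t_1+i}} < \tfrac1{q^2}\rho^i$, so the exponent is bounded: $\sum_{i=0}^{t-1} \tfrac{d_{t_1+i}}{s_{t_1+i}} < \tfrac1{q^2}\sum_{i\ge0}\rho^i = \tfrac1{q^2}\cdot\tfrac1{1-\rho} = \tfrac{2q^3}{q^2(q-2)} = \tfrac{2q}{q-2}$. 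Hence the product is at least $\exp\bigl(-\tfrac1q\cdot\tfrac{2q}{q-2}\bigr) = \exp\bigl(-\tfrac{2}{q-2}\bigr)$.

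Combining, $d_{t_1+t} \geq d_{t_1}\,\rho^t\,\exp\bigl(-\tfrac{2}{q-2}\bigr)$, and then using $e^{-x} \geq 1-x$ gives $\exp\bigl(-\tfrac{2}{q-2}\bigr) \geq 1 - \tfrac{2}{q-2} \geq 1 - \tfrac{4}{q-2}$. This already yields the claimed bound with $\rho^t$ in place of $\mu^t$; since $\mu = 1 - \tfrac1{2q^2} \le \rho$ for $q$ large (indeed $\tfrac{q-2}{2q^3} \le \tfrac1{2q^2}$ iff $q-2 \le q$, always true), we have $\rho^t \ge \mu^t$, so replacing $\rho$ by $\mu$ only weakens the bound and the inequality still holds. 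Actually, to match the statement exactly I would just carry the $\mu^t$ through: note $\rho \ge \mu$, so $d_{t_1+t} \ge d_{t_1}(1-\tfrac4{q-2})\rho^t \ge d_{t_1}(1-\tfrac4{q-2})\mu^t$.

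I do not expect any real obstacle here — every ingredient (the $\eta$-recursion bound, the ratio bound, the geometric series, the $e^{-x}\ge 1-x$ estimate) is already established in the preceding lemmas, and the argument is a near-verbatim parallel of Lemma~\ref{lem:s_stage_2}. The only mild subtlety is checking the inequality $\rho \ge \mu$ so that one may legitimately state the conclusion with $\mu^t$; this is the one line worth spelling out, and it holds for all $q \ge 1$.
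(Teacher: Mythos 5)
There is a genuine error in your first step, although it is easily repaired. You claim that the factor $1 - \tfrac{q-1}{2q^3} e^{-\frac1q \frac{d_t}{s_t}} = \eta_{t+1}/\eta_t$ is \emph{at least} $\rho = 1 - \tfrac{q-2}{2q^3}$, citing the proof of Lemma~\ref{lem:n_stage_2}. But that lemma proves the \emph{opposite} direction: $\eta_{t+1}/\eta_t < \rho$ (it uses $e^{-x} \geq 1-x$ to push $\eta_{t+1}$ down, then drops further to get an upper bound). Concretely, the factor satisfies
$$\mu = 1 - \tfrac{1}{2q^2} \;<\; 1 - \tfrac{q-1}{2q^3} e^{-\frac1q \frac{d_t}{s_t}} \;<\; \rho = 1 - \tfrac{q-2}{2q^3},$$
so $\rho$ is an upper bound, not a lower bound, and extracting $\rho^t$ is unjustified. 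What you actually want --- and what the paper uses --- is the \emph{lower} bound coming from $e^{-x} \leq 1$: since $\tfrac{q-1}{2q^3} e^{-\frac1q \frac{d_t}{s_t}} \leq \tfrac{q-1}{2q^3} \leq \tfrac{1}{2q^2}$, the factor is at least $\mu$. That is precisely why $\mu$ (not $\rho$) appears in the lemma's statement; your closing paragraph, which downgrades from $\rho^t$ to $\mu^t$ via $\mu \leq \rho$, becomes unnecessary once the first step is fixed.

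With that substitution your proof goes through, and it is in fact a mild variant of the paper's. The paper observes that the remaining product of exponentials telescopes to $s_{t_1+t}/s_{t_1}$ and invokes Lemma~\ref{lem:s_stage_2}; you instead bound the exponent directly via Lemma~\ref{lem:ratio_stage_2} and a geometric series, getting $\exp(-\tfrac{2}{q-2}) \geq 1 - \tfrac{2}{q-2}$, which is a slightly sharper constant than the paper's $1 - \tfrac{4}{q-2}$ (the paper loses a factor of $2$ in passing through the bound $1 - x \geq e^{-2x}$ inside Lemma~\ref{lem:s_stage_2}). Either route works; yours is marginally cleaner for this particular lemma, but the paper's has the advantage of reusing Lemma~\ref{lem:s_stage_2} rather than redoing a parallel computation.
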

\begin{proof}
By the definition of sequence $(d_t)$ in equation \eqref{eq:rec_stage_2}, we have
\begin{align*}
d_{t+1} &= d_{t}(1 - \frac {q-1} {2q^3} e^{-\frac 1 {q} \frac {d_{t}}{s_{t}}})e^{-\frac 1 {q} \frac {d_{t}}{s_{t}}} \\
	&\geq d_{t}(1 - \frac {1} {2q^2})e^{-\frac 1 {q} \frac {d_{t}}{s_{t}}} \\
	&= d_{t} \mu \; e^{-\frac 1 {q} \frac {d_{t}}{s_{t}}}.
\end{align*}
Combining the inequality above with the definition of sequence $(s_t)$ in equation \eqref{eq:rec_stage_2}, we get
$$\frac {d_{t+1}}{d_t} \geq \mu \frac {s_{t+1}}{s_t}.$$ 
We now use Lemma \ref{lem:s_stage_2} to conclude that
$$ \frac {d_{t_1+t}}{d_{t_1}} \geq \mu^t \frac {s_{t_1+t}}{s_{t_1}} 
	\geq \mu^t (1 - \frac {4}{q-2}).$$
\qed
\end{proof}
Let $t_2$ be the number of rounds spent in the second stage.
\begin{lemma}
\label{lem:e_stage_1_2}
There is a positive constant $\alpha$ such that in the first two stages of our algorithm,
$$e_{t} \leq \alpha^{t} O(\sqrt{\frac {k \; e^{O(k)} \psi}{\Delta \mu^{t_2}}}).$$
\end{lemma}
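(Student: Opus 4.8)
The plan is to solve the linear recurrence \eqref{eq:def_rece} in closed form and then substitute uniform lower bounds on $s_i$ and $d_i$ valid throughout the first two stages. Unrolling $e_0 = 0$ and $e_{t+1} = \alpha_1(e_t + \sqrt{\psi/d_t} + \sqrt{\psi/s_t})$ gives
\[
e_t \;=\; \sum_{i=0}^{t-1} \alpha_1^{\,t-i}\Bigl(\sqrt{\psi/d_i} + \sqrt{\psi/s_i}\Bigr),
\]
so it suffices to exhibit a bound $B = O\bigl(\sqrt{k\,e^{O(k)}\psi/(\Delta\mu^{t_2})}\bigr)$ with $\sqrt{\psi/d_i} + \sqrt{\psi/s_i} \le B$ for every round $i$ of the first two stages, and then to estimate the resulting geometric sum.

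First I would handle the first stage. By \eqref{eq:rec_stage_1} the sequences $(s_t)$ and $(d_t)$ are non-increasing for $0 \le t \le t_1$, so both are minimized at $t = t_1$. Lemma \ref{lem:s_t1} gives $s_{t_1} = (\Delta/k)\,e^{-O(k)}$. For $d_{t_1}$ I would use the identity from the proof of Lemma \ref{lem:t_stage_1}, namely $d_{t+1}/s_{t+1} = d_t/s_t - \frac{q-1}{2q^3}e^{-1/q}$: since the round before $t_1$ still satisfied $d/s \ge 1/q^2$, we get $d_{t_1}/s_{t_1} \ge \frac{1}{q^2} - \frac{q-1}{2q^3}e^{-1/q} = \Omega(1)$ (for $q$ as in the Main Theorem), hence $d_{t_1} = \Omega(s_{t_1}) = \Omega\bigl((\Delta/k)e^{-O(k)}\bigr)$. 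Consequently $\sqrt{\psi/d_i} + \sqrt{\psi/s_i} = O\bigl(\sqrt{k\,e^{O(k)}\psi/\Delta}\bigr) \le B$ for all $i \le t_1$.

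For the second stage I would invoke the lemmas already proved. Lemma \ref{lem:s_stage_2} gives $s_{t_1+t} \ge s_{t_1}(1 - \frac{4}{q-2}) = \Omega(s_{t_1})$, and Lemma \ref{lem:d_stage_2} gives $d_{t_1+t} \ge d_{t_1}(1 - \frac{4}{q-2})\mu^{t} = \Omega\bigl(d_{t_1}\mu^{t_2}\bigr)$ since $\mu < 1$ and $t \le t_2$. Combining with $d_{t_1} = \Omega((\Delta/k)e^{-O(k)})$ and $s_{t_1} = (\Delta/k)e^{-O(k)}$, and noting $1/\mu^{t_2} \ge 1$ so that the $d$-term dominates the $s$-term, we again obtain $\sqrt{\psi/d_i} + \sqrt{\psi/s_i} = O\bigl(\sqrt{k\,e^{O(k)}\psi/(\Delta\mu^{t_2})}\bigr) = B$ for every round $i$ of the second stage. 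Hence $\sqrt{\psi/d_i} + \sqrt{\psi/s_i} \le B$ for all rounds $i$ of the first two stages.

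Finally, setting $\alpha = \max\{2, \alpha_1\}$, the closed form for $e_t$ yields $e_t \le B\sum_{i=0}^{t-1}\alpha_1^{\,t-i} \le B\sum_{i=0}^{t-1}\alpha^{\,t-i} \le \frac{\alpha}{\alpha-1}\,\alpha^{\,t}B \le 2\,\alpha^{\,t}B$, which is the claimed bound. The step I expect to be the main obstacle is the lower bound on $d_{t_1}$: one must observe that $d_t/s_t$ only decreases by a fixed constant per round, so it cannot undershoot $\Omega(1)$ at the instant the stopping rule $d_t/s_t < 1/q^2$ first fires; beyond that, the remaining work is the routine check that a single factor $e^{O(k)}$ (with a larger implied constant than in Lemma \ref{lem:s_t1}) together with $\mu^{t_2}$ absorbs all the constants produced by Lemmas \ref{lem:s_stage_2} and \ref{lem:d_stage_2}, which holds because $q = 7 > 2$.
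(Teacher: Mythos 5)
Your proof is correct and follows essentially the same route as the paper: both bound the additive term $\sqrt{\psi/d_i}+\sqrt{\psi/s_i}$ uniformly over the first two stages by $O(\sqrt{k\,e^{O(k)}\psi/(\Delta\mu^{t_2})})$ using monotonicity of $(s_t),(d_t)$ together with Lemmas \ref{lem:s_t1}, \ref{lem:s_stage_2}, and \ref{lem:d_stage_2}, then sum the resulting geometric series; you are in fact slightly more careful than the paper in that you derive the lower bound $d_{t_1}/s_{t_1} \geq 1/q^2 - \frac{q-1}{2q^3}e^{-1/q}$ explicitly, whereas the paper simply asserts $d_{t_1}=\Theta(s_{t_1})$. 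One small slip at the end: for the factor $1-4/(q-2)$ from Lemmas \ref{lem:s_stage_2} and \ref{lem:d_stage_2} to be a positive constant you need $q>6$, not merely $q>2$, but this is satisfied since $q=7$.
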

\begin{proof}
By Lemma \ref{lem:s_t1}, we have
$$s_{t_1} = \frac{\Delta} k e^{-O(k)}.$$
Note that in equation \eqref{eq:def_rece}, the recurrence for $e_{t}$,
the largest term is $O(\sqrt{\psi / s_t})$ in the first stage, 
while $O(\sqrt{\psi / d_t})$ is larger in the second.
At round $t_1$, the algorithm moves to the second stage and
$d_{t_1} = \Theta (s_{t_1})$.
The greedy stage starts at round $t_1 + t_2$. 
Since both sequences $(d_t)$ and $(s_t)$ are decreasing,
we use Lemma \ref{lem:d_stage_2} to conclude that
$$O(\sqrt{\frac{\psi}{d_{t_1+t_2}}}) = O(\sqrt{\frac {\psi}{s_{t_1} \mu^{t_2}}}) = 
O(\sqrt{\frac {k \; e^{O(k)} \psi}{\Delta \mu^{t_2}}})$$ 
is the maximum this error term can be. Thus we can simplify the recurrence for $e_{t}$ to
$$e_{t+1} = O(e_{t} + \sqrt{\frac {k \; e^{O(k)} \psi}{\Delta \mu^{t_2}}}).$$
Since $e_{0} = 0$, a simple upper bound for $e_t$ is given by
$$e_{t} \leq \alpha^{t} O(\sqrt{\frac {k \; e^{O(k)} \psi}{\Delta \mu^{t_2}}})$$
where $\alpha$ is some positive constant.
\qed
\end{proof}

\begin{lemma}
\label{lem:pre_main}
Given a triangle-free graph $G$ on $n$
vertices with maximum degree $\Delta \geq \log^{1+\epsilon} n$ for a positive constant $\epsilon$,
and $q = 7$,
there is a positive constant $c_{\epsilon}$ such that,
with $\Delta / k$ colors where $k \leq c_{\epsilon} \log \Delta$,
our algorithm reaches the greedy stage at round 
$\tau = O(\log \Delta)$ with $e_{\tau} \ll 1$, and
$\forall u \in V(G_{\tau})$
\begin{align*}
s_{\tau}(u) &\geq \frac {q-1} q s_{\tau}(1 - e_{\tau}) \\
\eta_{\tau}(u) &\leq \eta_{\tau}(1 + e_{\tau})
\end{align*}
with probability $1-O(1/n)$.
\end{lemma}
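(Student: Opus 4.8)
The plan is to assemble Lemma~\ref{lem:pre_main} from the pieces already established, treating the three stages in sequence and tracking two quantities throughout: the number of rounds elapsed, and the accumulated error $e_t$. First I would fix the constant $q=7$ (which satisfies $q\geq 2$, $q>6$, etc., so every earlier lemma applies) and choose $k\leq c_\epsilon\log\Delta$ for a constant $c_\epsilon$ to be pinned down at the end. By Lemma~\ref{lem:t_stage_1} the first stage lasts $t_1=O(k)=O(\log\Delta)$ rounds, and by Lemma~\ref{lem:t_stage_2}, for any $\delta>0$ we may shrink $c_\epsilon$ so that the second stage lasts at most $\delta\log\Delta$ rounds; hence the greedy stage begins at round $\tau=t_1+t_2=O(\log\Delta)$. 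This handles the ``$\tau=O(\log\Delta)$'' claim.

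Next I would bound $e_\tau$. By Lemma~\ref{lem:e_stage_1_2},
\[
e_\tau \leq \alpha^{\tau}\, O\!\left(\sqrt{\frac{k\,e^{O(k)}\psi}{\Delta\,\mu^{t_2}}}\right).
\]
Here I must choose $\psi$; the natural choice is $\psi=\Theta(\log n)$ (or a small power of $\log n$), large enough that the failure bound below is useful but small enough that the square root stays $o(1)$. Now $\alpha^{\tau}=\alpha^{O(\log\Delta)}=\Delta^{O(1)}$, $e^{O(k)}=\Delta^{O(1)}$ since $k=O(\log\Delta)$, and $\mu^{-t_2}=\mu^{-\delta\log\Delta}=\Delta^{O(\delta)}$; meanwhile $\Delta\geq\log^{1+\epsilon}n$ so $\psi/\Delta\leq (\log n)/\log^{1+\epsilon}n=o(1)$ polynomially. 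The point is that the exponents $O(1)$ multiplying $\log\Delta$ in $\alpha^\tau$ and $e^{O(k)}$ are \emph{fixed} absolute constants once $q$ is fixed, whereas the exponent $O(\delta)$ can be made arbitrarily small by the choice of $c_\epsilon$ in Lemma~\ref{lem:t_stage_2}, and the gain from $\Delta\geq\log^{1+\epsilon}n$ gives us a factor $n^{-\Omega(\epsilon)}$ to play with. So I would first select $\delta$ small relative to $\epsilon$, then select $c_\epsilon$ accordingly, and conclude $e_\tau=O(\Delta^{-c'})=o(1)$ for some $c'>0$; in particular $e_\tau\ll 1$.

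With $\tau=O(\log\Delta)$ and $e_\tau\ll 1$ in hand, I would apply Corollary~\ref{cor:main_lemma} at round $\tau$. To invoke it I need its hypotheses $s_\tau\gg\psi$ and $e_\tau\ll 1$: the latter is just established, and for the former I use Lemma~\ref{lem:s_t1} ($s_{t_1}=(\Delta/k)e^{-O(k)}=\Delta^{1-o(1)}$ after taking $k=O(\log\Delta)$) together with Lemma~\ref{lem:s_stage_2} (which only costs a constant factor $1-4/(q-2)$, positive since $q=7$), so $s_\tau=\Delta^{1-o(1)}\gg\psi=\mathrm{polylog}(n)$, again using $\Delta\geq\log^{1+\epsilon}n$. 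Corollary~\ref{cor:main_lemma} then gives, for all $u\in V(G_\tau)$,
\[
s_\tau(u)\geq\frac{q-1}{q}\,s_\tau(1-e_\tau),\qquad d_\tau(u)\leq d_\tau(1+e_\tau),
\]
with probability greater than $f_\tau=1-\alpha_2\tau n^2 e^{-\psi}$. I only need the first inequality together with the $\eta$-bound from the Main Lemma, $\eta_\tau(u)\leq\eta_\tau(1+e_\tau)$, which comes from the same application. Finally, by choosing $\psi=C\log n$ with $C$ large enough that $\alpha_2\tau n^2 e^{-\psi}=\alpha_2\cdot O(\log\Delta)\cdot n^2\cdot n^{-C}=O(1/n)$ (here $\tau=O(\log\Delta)=O(\log n)$ is absorbed), the success probability is $1-O(1/n)$. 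I would close by verifying that this choice of $\psi$ is consistent with the earlier requirement that $\psi$ be small enough to keep $e_\tau=o(1)$ — it is, since $\psi=\Theta(\log n)$ and $\Delta\geq\log^{1+\epsilon}n$ leave the polynomial-in-$\Delta$ slack intact.

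The main obstacle is the bookkeeping in the second paragraph: one must arrange the order of quantifier choices ($q$, then $\delta$, then $\psi$, then $c_\epsilon$) so that the arbitrarily-small exponent $O(\delta)$ from the length of the second stage dominates the fixed constant exponents hidden in $\alpha^\tau$ and $e^{O(k)}$, and so that the single budget $\Delta\geq\log^{1+\epsilon}n$ simultaneously pays for $e_\tau=o(1)$, for $s_\tau\gg\psi$, and for $f_\tau=1-O(1/n)$. No new inequality is needed — every estimate is already proved — but the interdependence of the constants is the delicate point, and I would state the choices explicitly in the order above to make the argument unambiguous.
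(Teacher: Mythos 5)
Your proposal follows essentially the same route as the paper: bound $\tau=t_1+t_2$ via Lemmas~\ref{lem:t_stage_1} and \ref{lem:t_stage_2}, bound $e_\tau$ via Lemma~\ref{lem:e_stage_1_2} and the budget from $\Delta\geq\log^{1+\epsilon}n$, verify $s_\tau\gg\psi$ via Lemmas~\ref{lem:s_t1} and \ref{lem:s_stage_2}, then apply the Main Lemma/Corollary~\ref{cor:main_lemma} at round $\tau$. You are even slightly more careful than the paper in two spots: noting that the $\eta_\tau(u)$ bound must be read off the Main Lemma itself rather than Corollary~\ref{cor:main_lemma}, and choosing $\psi=C\log n$ with $C$ large enough to absorb the $\tau=O(\log\Delta)$ factor so that $f_\tau=1-O(1/n)$ (the paper fixes $\psi=3\log n$).

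One caveat on your bookkeeping in the middle paragraph. The slack you get from $\Delta\geq\log^{1+\epsilon}n$ is a factor $\psi/\Delta\leq 3\log n/\log^{1+\epsilon}n=3/\log^{\epsilon}n$, i.e.\ a power of $\log n$ (equivalently a power of $\Delta$), \emph{not} $n^{-\Omega(\epsilon)}$. Correspondingly, you should not treat the exponent hidden in $\alpha^\tau=\Delta^{O(1)}$ as a fixed absolute constant: since $\tau=(O(c_\epsilon)+\delta)\log\Delta$, that exponent is $(O(c_\epsilon)+\delta)\log\alpha$ and shrinks with $c_\epsilon$ and $\delta$. It is precisely this controllability, not a polynomial-in-$n$ gain, that lets the $(\log n)^{-\Omega(\epsilon)}$ slack win. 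Your final choice order (first $\delta$, then $c_\epsilon$, then $\psi$) is the right one and the conclusion $e_\tau=O(\Delta^{-c'})=o(1)$ does hold, so the argument is sound; just replace the ``$n^{-\Omega(\epsilon)}$'' and ``fixed exponent'' phrasing with the correct accounting above.
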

\begin{proof}
Let $\psi = 3 \log n$,
and let $\tau = t_1 + t_2$ be the number of rounds to reach the greedy stage. 
Using Lemma \ref{lem:e_stage_1_2}, we get
$$ \exists \alpha, \beta > 0, \;\;\; 
	e_{\tau} \leq \alpha^{t_1} \beta^{t_2} O(\sqrt{\frac{k e^{O(k)}\psi}{\Delta}}).$$
Since $\Delta \geq \log^{1+\epsilon} n$,
it is straightforward to show that
$$\exists c_1 > 0, \;\;\; e_{\tau} \ll 1$$
if $t_1, t_2, k \leq c_1 \log \Delta$.
Since $t_1 = O(k)$, by Lemma \ref{lem:t_stage_2}
$$\exists c_2 > 0, \;\;\; t_1, t_2 \leq c_1 \log \Delta, \;\;\; \mbox{and} \;\;\; s_{\tau} \gg \psi$$
if $k \leq c_2 \log \Delta$.

Let $c_{\epsilon} = min(c_1,c_2)$.
The above computations show that if $k \leq c_{\epsilon} \log \Delta$,
then  $\tau = t_1 + t_2 = O(k + \log \Delta) = O(\log \Delta)$ and $e_{\tau} \ll 1$.
Applying Corollary \ref{cor:main_lemma} completes the proof.
\qed
\end{proof}
We may now prove the Main Theorem.
\begin{proof}[of the Main Theorem]
Using Lemma \ref{lem:pre_main}, we only need to compute the time complexity of our algorithm.
The number of steps taken in the greedy coloring stage
is dominated by that of the first and second stages.
In each round of these stages, 
we compute the probability of the event $\mathcal{F}_t(u,c)$ 
defined in equation \eqref{eq:pr_u_dr_c}.
This requires iterating through each uncolored vertex $u$ and color $c$ in its palette, 
and examining all adjacent vertices.

Thus each round takes $O(n \Delta^2)$ steps and by Lemma \ref{lem:pre_main},
the algorithm requires $\tau = O(\log \Delta)$ rounds to reach the greedy coloring stage.
With probability $1-O(1/n)$, we get
$$\forall u \in V(G_{\tau}), \;\;\; \eta_{t}(u) < s_{\tau}(u),$$
which implies that the greedy coloring stage will successfully complete the coloring.
\qed
\end{proof}

\section{Several Useful Inequalities}
\label{sec:prelim}
Now we look at some preliminaries which will be used in the proof details.
The next lemma describes what happens to the average value of a finite subset of
real numbers when large elements are removed. 
As shown in the statement of the lemma, 
it implies Markov's Inequality \cite{AS08}.
\begin{lemma}
\label{lem:mu1}
Consider a set of positive real numbers of size $n$ and average value $\mu$.
If we remove $\alpha n$ elements with value atleast $q \mu$ for some $q > 1$,
then the remaining points have average
$$ \mu' \leq \mu \frac{1-q \alpha}{1- \alpha}.$$
In particular, $\alpha \leq \frac 1 q$ since $\mu' \geq 0$.
\end{lemma}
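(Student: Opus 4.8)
The plan is to argue directly with sums; no machinery beyond arithmetic is needed. Let $A$ denote the set of $n$ positive reals, so that $\sum_{a \in A} a = n\mu$ by hypothesis, and let $B \subseteq A$ be the set of removed elements, with $|B| = \alpha n$ and $b \geq q\mu$ for each $b \in B$. The first step is to bound the removed mass from below: $\sum_{b \in B} b \geq (\alpha n)(q\mu) = q\alpha n\mu$.

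Next I would pass to the complement. The surviving set $A \setminus B$ has exactly $(1-\alpha)n$ elements, and its total is $\sum_{a \in A} a - \sum_{b \in B} b \leq n\mu - q\alpha n\mu = (1-q\alpha)n\mu$. Note that $\alpha < 1$: otherwise every element of $A$ would be at least $q\mu > \mu$, forcing the average to exceed $\mu$, a contradiction. Hence $(1-\alpha)n > 0$, and dividing gives $\mu' \leq \frac{(1-q\alpha)n\mu}{(1-\alpha)n} = \mu\frac{1-q\alpha}{1-\alpha}$, which is the claimed bound.

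Finally, for the ``in particular'' clause: every surviving element is positive, so $\mu' \geq 0$; combined with $\mu > 0$ and $1-\alpha > 0$ this forces $1-q\alpha \geq 0$, i.e.\ $\alpha \leq 1/q$. Read contrapositively, at most a $1/q$ fraction of the $n$ numbers can be as large as $q\mu$, which is precisely Markov's inequality for the uniform distribution on $A$, and this is the sense in which the lemma sharpens it. There is essentially no obstacle here: the only things to watch are the degenerate case $\alpha = 1$ and keeping inequality directions correct when dividing, both of which are clean because $0 \leq \alpha < 1$ and $\mu > 0$.
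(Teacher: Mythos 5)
Your proof is correct and follows essentially the same route as the paper's: both rest on the identity that the total mass splits as $n\mu = \sum_{B} b + \sum_{A\setminus B} a$, bound the removed mass below by $q\alpha n\mu$, and divide by $(1-\alpha)n$ to conclude. Your added remark that $\alpha < 1$ (justifying the division) is a small but worthwhile piece of rigor that the paper's one-line proof leaves implicit.
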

\begin{proof}
The conclusion is obtained by a trivial manipulation of the following inequality
which relates $\mu$ and $\mu'$.
$$ q \mu \alpha + \mu' (1 - \alpha ) \leq \mu $$
\qed
\end{proof}
The next lemma describes what  happens when we add large elements to a finite
subset of real numbers.
\begin{lemma}
\label{lem:mu2}
Given the setup of Lemma \ref{lem:mu1}, 
if we add $\alpha n$ points with value $q \mu$ to the sample,
then the resulting larger sample has average
$$ \mu' = \mu \frac {1+q \alpha}{1+\alpha}$$
\end{lemma}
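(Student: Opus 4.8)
The plan is to compute the new average directly from the definition, exactly mirroring the one-line argument behind Lemma~\ref{lem:mu1}. Write $T$ for the sum of the original $n$ points; by definition of the average, $T = n\mu$. Now we adjoin $\alpha n$ points, each of value exactly $q\mu$: this increases the running sum by $q\mu \cdot \alpha n$ and increases the number of points by $\alpha n$. Hence the enlarged sample consists of $(1+\alpha)n$ points with total value $n\mu + q\mu\,\alpha n = n\mu(1+q\alpha)$.

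Dividing the total by the new sample size yields $\mu' = \frac{n\mu(1+q\alpha)}{(1+\alpha)n} = \mu\,\frac{1+q\alpha}{1+\alpha}$, which is precisely the claimed identity. I would remark that, unlike Lemma~\ref{lem:mu1}, this comes out as an equality rather than an inequality: there the removed elements were only assumed to be \emph{at least} $q\mu$, so the bound on $\mu'$ had slack, whereas here the adjoined elements are assumed to equal $q\mu$ exactly, leaving nothing to estimate.

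There is essentially no obstacle to overcome — the statement is an arithmetic identity about a weighted mean. The only point that deserves a moment's care is the bookkeeping convention inherited from Lemma~\ref{lem:mu1}: one reads ``add $\alpha n$ points'' with $\alpha n$ a nonnegative integer, so that we really are taking the average of a genuine finite multiset; with that understood, the two-line computation above is a complete proof.
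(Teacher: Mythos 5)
Your computation is correct and is essentially identical to the paper's: the paper writes the defining relation $\mu'(1+\alpha) = \mu + q\mu\alpha$ and solves for $\mu'$, which is exactly your total-sum-over-total-count argument after cancelling the common factor of $n$. Your side remarks about equality versus inequality and integrality of $\alpha n$ are sensible but not needed for the proof.
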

\begin{proof}
The conclusion is easily obtained from the following equation relating $\mu$ and $\mu'$.
$$ \mu'(1+\alpha) = \mu + q \mu \alpha $$
\qed
\end{proof}
We use the following lemma for computations with error factors.
\begin{lemma}
\label{lem:O}
Let $(A_n)$ be a sequence such that $0 < A_n < c < 1$(where $c$ is a constant),
and let $(e_n)$ be another sequence. Then
$$ 1 - A_n(1 + e_n) = (1 - A_n)(1+O(e_n)) $$
\end{lemma}
\begin{proof}
\begin{eqnarray*}
1 - A_n(1+e_n) &=& (1-A_n)(1+e_n) - e_n \\
&=& (1-A_n)(1+e_n) - (1-A_n) \frac {e_n}{(1-A_n)} \\
&=& (1-A_n)(1+e_n) - (1-A_n) O(e_n) \\
&=& (1-A_n)(1+O(e_n))
\end{eqnarray*}
\qed
\end{proof}
We use the following version of Azuma's inequality \cite{MR01}
to prove concentration of random variables.
\begin{alphatheorem}[Azuma's inequality]
\label{thm:azuma}
Let $X$ be a random variable determined by $n$ trials
$T_1, \dots, T_n$, such that for each $i$,
and any two possible sequences of outcomes
$t_1, \dots, t_i$ and $t_1, \dots, t_{i-1}, t_i'$:
$$|E[X | T_1=t_1, \dots, T_i = t_i] - E[X | T_1=t_1, \dots, T_i = t_i']| \leq \alpha_i$$
then
$$Pr(|X - E[X]| > t) \leq 2 e^{-t^2/(\sum \alpha_i^2)}$$
\end{alphatheorem}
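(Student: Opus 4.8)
The plan is to run the standard exponential-moment (Chernoff-type) argument on the Doob martingale of the trials; this is a well-known result, and I would include the proof only for completeness. First I would set $Z_i := E[X \mid T_1, \dots, T_i]$ for $0 \le i \le n$, so that $Z_0 = E[X]$ is constant and $Z_n = X$, and let $D_i := Z_i - Z_{i-1}$ be the martingale differences. Since $Z_{i-1} = E[Z_i \mid T_1, \dots, T_{i-1}]$ by the tower property, $E[D_i \mid T_1, \dots, T_{i-1}] = 0$. The key structural point is that the hypothesis bounds the \emph{conditional range} of $D_i$: conditioned on $T_1 = t_1, \dots, T_{i-1} = t_{i-1}$, the variable $Z_i$ is a function of $T_i$ alone, namely $t_i \mapsto E[X \mid T_1 = t_1, \dots, T_i = t_i]$, and by assumption any two of these values differ by at most $\alpha_i$; since, on the same event, $Z_{i-1}$ is a convex average of these values, $D_i$ lies in an interval of length at most $\alpha_i$ and has conditional mean zero. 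Note no independence of the $T_i$ is needed for this.

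Next I would invoke Hoeffding's lemma: a mean-zero random variable supported in an interval of length $\ell$ has $E[e^{\lambda Y}] \le e^{\lambda^2 \ell^2/8}$. Applied conditionally on $T_1, \dots, T_{i-1}$, this yields $E[e^{\lambda D_i} \mid T_1, \dots, T_{i-1}] \le e^{\lambda^2 \alpha_i^2/8}$ for every $\lambda > 0$. Then I would peel the differences off one at a time using iterated conditioning,
\[
E\bigl[e^{\lambda (X - E[X])}\bigr] = E\Bigl[e^{\lambda \sum_{i=1}^{n-1} D_i}\, E[e^{\lambda D_n} \mid T_1, \dots, T_{n-1}]\Bigr] \le e^{\lambda^2 \alpha_n^2/8}\, E\bigl[e^{\lambda \sum_{i=1}^{n-1} D_i}\bigr],
\]
and iterating gives $E[e^{\lambda(X - E[X])}] \le \exp\bigl(\lambda^2 A/8\bigr)$ where $A := \sum_i \alpha_i^2$.

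Finally, Markov's inequality gives $\Pr(X - E[X] > t) \le e^{-\lambda t}\exp(\lambda^2 A/8)$, and choosing $\lambda = 4t/A$ produces the bound $e^{-2t^2/A}$, which is at least as strong as the claimed $e^{-t^2/A}$. Applying the same argument to $-X$ — whose trials satisfy the identical Lipschitz bounds $\alpha_i$ — controls the lower tail, and a union bound over the two one-sided events yields $\Pr(|X - E[X]| > t) \le 2 e^{-t^2/A}$, as required.

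The only non-routine step is the conditional range bound on $D_i$: one must argue carefully that the hypothesis, stated as a bound on how far a single-coordinate change moves the conditional expectation, is precisely what confines $D_i$ to an interval of length $\alpha_i$ after conditioning on the first $i-1$ trials, and in particular that conditioning does not require the $T_i$ to be independent. Everything downstream — Hoeffding's lemma, the telescoping of conditional exponential moments, and the optimization over $\lambda$ — is entirely standard.
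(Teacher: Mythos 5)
The paper states Theorem~A as a cited result from [MR01] and gives no proof of its own, so there is no in-paper argument to compare against; I will evaluate your proposal on its own terms. Your argument is the standard martingale proof of Azuma--Hoeffding and is correct: the Doob martingale $Z_i = E[X \mid T_1, \dots, T_i]$ converts the hypothesis into a conditional range bound of $\alpha_i$ on each difference $D_i = Z_i - Z_{i-1}$; Hoeffding's lemma controls each conditional exponential moment; iterated conditioning telescopes these into a global moment bound; and Markov plus optimization over $\lambda$ gives the tail, with the lower tail obtained by applying the same argument to $-X$. You correctly identify the one delicate step, namely that the coordinate-Lipschitz condition on conditional expectations (which imposes no independence on the $T_i$) is exactly what confines $D_i$ to an interval of conditional length at most $\alpha_i$ containing $0$, since, conditioned on $T_1, \dots, T_{i-1}$, the variable $Z_i$ takes values in a set of diameter $\le \alpha_i$ and $Z_{i-1}$ is a mixture of those values and hence lies in the same interval. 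The constant you obtain, $2e^{-2t^2/\sum \alpha_i^2}$, is in fact sharper than the stated $2e^{-t^2/\sum \alpha_i^2}$; different references normalize the exponent differently, and the paper only needs the weaker form, so this is fine.
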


\section{Proof of the Main Lemma}
\label{sec:details}

\begin{proof}[of the Main Lemma]
The proof is by induction on the round number $t$ using the lemmas that follow.
The base case, when $t=0$, is trivially true.
Lemmas \ref{lem:A}, \ref{lem:S}, and \ref{lem:D} give us the induction step for the 
first stage; Lemmas  \ref{lem:2A}, \ref{lem:2S}, and \ref{lem:2D} give
the induction step for the second. 
\qed
\end{proof}

All events below are in the probability space generated by our randomized algorithm
in round $t+1$,
given the state of all data structures at the beginning of the round.
The following assumptions are the induction hypothesis and are repeatedly used in all
the lemmas of this section.

\begin{assumption}
\label{ass:indhyp}
Assume
$$q \geq 2, s_t \gg \psi, e_t \ll 1$$
and
$$\forall u \in V(G_t), \forall c \in S_{t}(u), \exists \alpha \in [0, 1/q],$$
\begin{align*}
s_t(u) &\geq (1-\alpha) s_{t}(1 - e_{t}) \\
d_{t}(u) &\leq \frac{1-q \alpha}{1- \alpha} d_{t}(1+ e_{t}) \\
d_{t}(u,c) &\leq q d_{t} (1+e_{t}) \\
\eta_{t}(u) &\leq \eta_{t}(1 + e_{t}).
\end{align*}
\end{assumption}



\subsection{Expected Values and Concentration Inequalities for the First Stage}
\label{sec:first_stage}
We next consider the state of the palettes just before the cleanup phase of round $t$.

\begin{definition}
Let ${\tilde S}_t(u)$ be the list of colors in the palette of vertex $u$
in round $t$ just before the cleanup phase,
and let ${\tilde s}_t(u)$ be the size of ${\tilde S}_t(u)$. 
That is, ${\tilde S}_t(u)$ is obtained from $S_t(u)$
by removing colors discarded in the conflict resolution phase.
\end{definition}

Consider the event
\begin{equation}
\label{eq:def_tilde_s}
\mathcal{\tilde S} = \{\forall u \in V(G_{t+1}), s_{t}(u) e^{-1/q}(1 - O(e_t + \sqrt{\frac{\psi}{s_{t}}} + \frac{1}{d_{t}})) \leq \tilde s_{t}(u) \leq s_{t}(u) e^{-1/q}(1 + O(e_t + \sqrt{\frac{\psi}{s_{t}}} + \frac{1}{d_{t}}))\}.
\end{equation}

\begin{lemma}
\label{lem:sg}
Given Assumption \ref{ass:indhyp}, we have
$$ Pr(\mathcal{\tilde S}) \geq 1- e^{-\psi}O(n).$$
\end{lemma}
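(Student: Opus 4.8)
The plan is to fix a vertex $u \in V(G_{t+1})$, express $\tilde{s}_t(u)$ as the sum over colors $c \in S_t(u)$ of indicator variables $X_c$ for the event that $c$ survives Phases II.1 and II.2, compute $E[\tilde{s}_t(u)]$, show it equals $s_t(u)e^{-1/q}(1+O(e_t + \sqrt{\psi/s_t} + 1/d_t))$, then apply Azuma's inequality (Theorem \ref{thm:azuma}) to get concentration, and finally take a union bound over all $u$. First I would compute $E[X_c]$. The color $c$ survives Phase II.1 iff it is not assigned to any neighbor of $u$, which is exactly the event $\mathcal{F}_t(u,c)$; by independence of the per-(vertex,color) assignments in Phase I, $Pr(\mathcal{F}_t(u,c)) = (1 - \frac{1}{q^2 d_t})^{d_t(u,c)}$. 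Using the induction hypothesis $d_t(u,c) \le q d_t(1+e_t)$ and the elementary bound $(1-x)^m = e^{-mx}(1+O(mx^2))$, together with $s_t(u)$ being close to $s_t$ and $d_t \le \Delta$, one shows $Pr(\mathcal{F}_t(u,c))$ is between roughly $e^{-1/q}$ up to the stated $O(\cdot)$ factors — actually it can be as small as $e^{-1/q}$ only after combining with Phase II.2. Phase II.2 then independently keeps $c$ with probability $\min(1, Desired\_\mathcal{F}_t / Pr(\mathcal{F}_t(u,c)))$; provided $Pr(\mathcal{F}_t(u,c)) \ge Desired\_\mathcal{F}_t = e^{-1/q}$ (which follows from $d_t(u,c)$ being not too large, giving $Pr(\mathcal{F}_t) \ge e^{-1/q}$ up to lower-order corrections — this is where the error terms $\sqrt{\psi/s_t}$ and $1/d_t$ and $e_t$ enter), the product $Pr(\mathcal{F}_t(u,c)) \cdot \frac{Desired\_\mathcal{F}_t}{Pr(\mathcal{F}_t(u,c))} = e^{-1/q}$ exactly, so $E[X_c] = e^{-1/q}$ up to the claimed multiplicative error. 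Summing over the $s_t(u)$ colors and using $s_t(u) = s_t(1+O(e_t+\alpha))$ gives $E[\tilde{s}_t(u)] = s_t e^{-1/q}(1 + O(e_t + \sqrt{\psi/s_t} + 1/d_t))$, and replacing $s_t$ by $s_t(u)$ absorbs into the same error form.

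Next I would set up the Azuma martingale. The trials are the color-assignment choices of Phase I (for all vertices and colors) together with the Phase II.2 coin flips, but only the choices touching the neighborhood $N_t(u)$ and the Phase II.2 flips for $u$'s own palette actually affect $\tilde{s}_t(u)$. Exposing one trial — say the color chosen for a single neighbor $v \in N_t(u)$, or whether we keep a single color $c \in S_t(u)$ in Phase II.2 — can change $\tilde{s}_t(u)$ by at most a constant (in fact by $1$ for a Phase II.2 flip, and by at most $1$ for a Phase I choice at $v$, since altering $v$'s assigned color flips the survival status of at most one additional color in $u$'s palette — here triangle-freeness matters only indirectly, but the bounded-difference constant is $O(1)$ regardless). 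The number of relevant trials is $O(\eta_t(u) \cdot s_t + s_t(u)) = O(\Delta \cdot \Delta/k)$ or so, but the cleaner route is to use that $\sum \alpha_i^2 = O(s_t(u)) = O(s_t)$: indeed only about $s_t(u)$ colors can survive, and each Phase I trial affecting $u$ is "charged" to the at most one palette color it toggles, so the effective sum of squared Lipschitz constants is $O(s_t)$. Then Azuma gives $Pr(|\tilde{s}_t(u) - E[\tilde{s}_t(u)]| > t_0) \le 2e^{-t_0^2/O(s_t)}$; choosing $t_0 = \Theta(\sqrt{\psi s_t})$ makes this $\le 2e^{-\Omega(\psi)}$ and makes the deviation $t_0 = O(\sqrt{\psi/s_t}) \cdot s_t$, i.e. a relative error $O(\sqrt{\psi/s_t})$, which is exactly the term already present in \eqref{eq:def_tilde_s}. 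A union bound over the at most $n$ vertices $u \in V(G_{t+1})$ yields failure probability $\le 2n e^{-\Omega(\psi)} = e^{-\psi}O(n)$ (absorbing constants into the $O(\cdot)$ and using that $e^{-\psi}O(n)$ dominates after adjusting $\psi$ by a constant), giving $Pr(\mathcal{\tilde S}) \ge 1 - e^{-\psi}O(n)$ as claimed.

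The main obstacle I anticipate is pinning down the bounded-difference constants $\alpha_i$ carefully enough that $\sum \alpha_i^2$ is genuinely $O(s_t)$ rather than something like $O(\Delta \cdot s_t)$, which would be too weak; the resolution is the "charging" observation above — changing one neighbor's color affects the indicator $X_c$ for at most one color $c$ (the old or new color assigned), so across all trials each color $c$ is hit only $O(\eta_t(u))$ times but contributes a total squared-difference of $O(1)$ to $X_c$ itself, and $\tilde{s}_t(u) = \sum_c X_c$ is 1-Lipschitz in each $X_c$. A secondary subtlety is the case analysis around whether the $\min(1,\cdot)$ in Phase II.2 is active: when $Pr(\mathcal{F}_t(u,c)) < Desired\_\mathcal{F}_t$ the factor is $1$ and $E[X_c] = Pr(\mathcal{F}_t(u,c)) < e^{-1/q}$; one must check this only costs a one-sided error consistent with the $(1 - O(\cdot))$ lower bound in \eqref{eq:def_tilde_s}, which it does since the induction hypothesis controls how far below $e^{-1/q}$ this probability can fall. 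I would handle this by bounding $Pr(\mathcal{F}_t(u,c)) \ge e^{-1/q}(1 - O(e_t + 1/d_t))$ directly from $d_t(u,c) \le q d_t(1+e_t)$ and the definition of $d_{t+1}$, so both the upper and lower estimate on $E[\tilde{s}_t(u)]$ carry the same $O(e_t + \sqrt{\psi/s_t} + 1/d_t)$ error, matching the event $\mathcal{\tilde S}$.
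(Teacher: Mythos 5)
Your computation of $E[\tilde{s}_t(u)]$ matches the paper's proof: you compute $Pr(\mathcal{F}_t(u,c))$ from the independent per-(vertex, color) Bernoullis, note that Phase II.2 caps the survival probability at $e^{-1/q}$, and handle the $\min(1,\cdot)$ branch by observing that the induction hypothesis $d_t(u,c)\le q d_t(1+e_t)$ forces $Pr(\mathcal{F}_t(u,c))\ge e^{-1/q}(1-O(e_t+1/d_t))$; in both cases $Pr\{c\in\tilde S_t(u)\}=e^{-1/q}(1+O(e_t+1/d_t))$, and linearity of expectation finishes. This is exactly the paper's route, merely spelled out more explicitly.

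The concentration step, however, has a real gap. With the trials taken to be the individual Phase I Bernoullis (plus the Phase II.2 coin flips), the number of trials that influence $\tilde s_t(u)$ is about $\sum_{c\in S_t(u)} d_t(u,c)=s_t(u)\,d_t(u)=\Theta(s_t d_t)$, each with worst-case difference $\alpha_i=1$, so $\sum\alpha_i^2=\Theta(s_t d_t)$. In the first stage $d_t\ge s_t/q^2$, so Azuma would give a deviation of order $s_t\sqrt{\psi}$, i.e.\ a relative error $\Omega(\sqrt{\psi})$, which is useless. Your proposed ``charging'' of each trial to the one palette color it toggles is an accounting device, not a reduction of $\sum\alpha_i^2$: Theorem~\ref{thm:azuma} sums the worst-case differences over \emph{trials}, and relabelling them by color cannot shrink that sum. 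What makes the argument work is to \emph{regroup} the trials: take $T_i$ to be the random set of vertices assigned color $c_i$ in round $t$ (together, if you like, with the Phase~II.2 coin flip for $c_i$ at $u$). There are only $m=s_t(u)$ such $T_i$; they are independent since different colors use disjoint Bernoullis; and each $T_i$ changes $\tilde s_t(u)$ by at most $1$, because it only controls whether $c_i$ survives in $u$'s palette. Hence $\sum\alpha_i^2\le s_t(u)$, Azuma with threshold $\sqrt{\psi s_t(u)}$ gives failure probability $e^{-\psi}O(1)$, and a union bound over the $n$ vertices yields the stated bound. (A variance-form martingale inequality such as Freedman's would rescue the ungrouped setup, since the per-trial conditional variance is $O(1/d_t)$ rather than $O(1)$; but the paper's tool is the McDiarmid-type Azuma, which needs the grouping.)
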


\begin{proof}
Suppose $u$ is an uncolored vertex at the beginning of round $t$, 
and $c$ a color in its palette.
\begin{align*}
\lefteqn{Pr\{\text{$c$ is removed from $S_{t}(u)$ in phase II.1}\}}  \;\;\;\;\;\; \\
	&= 1 - Pr\{\text{no neighbor of $u$ is assigned $c$}\} \\
	&= 1 - \prod_{v \in D_{t}(u,c)}(1 - Pr\{ \text{$v$ is assigned $c$} \}) \\
	&= 1 - \prod_{v \in D_{u,c}}(1 - \frac 1 {q^2} \frac 1 {d_{t}} ) \\
	&\leq 1 - (1 - \frac 1 {q^2} \frac 1 {d_{t}})^{d_{t}(u,c)} \\
	&\leq 1 - (1- \frac 1 {q^2} \frac 1 {d_{t}})^{q d_{t}(1+e_{t})} \\
	&\leq 1 - e^{\log (1 - \frac 1 {q^2} \frac 1 {d_{t}})q d_{t}(1+e_{t})} 
		& \langle \log(1+x) = x + O(x^2) \rangle \\
	&\leq 1 - e^{(-\frac 1 {q^2} \frac 1 {d_{t}} + O(\frac 1 {d_{t}})^2) q d_{t}(1+e_{t})} 
		& \langle \mbox{Assumption \ref{ass:indhyp}} \rangle \\
	&\leq 1 - e^{-1/q}(1+O(e_{t} + \frac{1}{d_{t}}))
\end{align*}
In phase II.2 of round $t+1$ we remove colors from the palette using an appropriate bernoulli variable, to get
\begin{equation}
\label{eq:u_dr_c_naturally}
Pr\{c \notin \tilde S_{t}(u)\} = 1 - e^{-1/q}(1 + O(e_t + \frac {1}{d_{t}})).
\end{equation}
Using linearity of expectation
$$ \forall u \in V(G_{t+1}), E[\tilde s_{t}(u)] = s_{t}(u) e^{-1/q}(1+O(e_t + \frac{1}{d_{t}})).$$

For concentration of $\tilde s_t(u)$,
suppose $s_t(u) = m$.
Let $c_1, \dots, c_m$ be the colors in $S_t(u)$.
Then $\tilde S_t(u)$ may be considered a random variable determined by $m$
trials $T_1, \dots, T_m$ where $T_i$ is the set of vertices in $G_t$
that are assigned color $c_i$ in round $t$. Observe that $T_i$
affects $\tilde S _t(u)$ by at most 1 given $T_1, \dots, T_{i-1}$.
Now using Theorem \ref{thm:azuma} we get,
$$Pr\{|\tilde s_t(u) - E[\tilde s_t(u)]| \geq \sqrt{\psi s_t(u)}\} \leq e^{-\psi} O(1).$$
We end the proof using the union bound for probabilities.
\qed
\end{proof}
We now focus on the sets $D_t(u,c)$. The following two lemmas will help.
\begin{lemma}
\label{lem:u_cl_c}
Let $u$ be an uncolored vertex, and $c$ a color in its palette at the beginning of round $t$.
Then given Assumption \ref{ass:indhyp}, we have
$$Pr\{\text{$u$ is assigned  $c$ and $c \in \tilde S_t(u)$}\} = 
	\frac 1 {q^2} \frac 1 {d_{t}} e^{-1/q}(1+O(e_{t} + \frac {1}{d_{t}})).$$
\end{lemma}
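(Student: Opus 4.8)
The plan is to write the event in question as the intersection of two nearly-independent events and multiply their probabilities, accounting carefully for the conditioning. First I would observe that the event ``$u$ is assigned $c$'' depends only on the single Bernoulli trial in Phase I that decides whether color $c$ is put on $u$; by the definition of Phase I in the first stage, this occurs with probability exactly $\frac{1}{q^2}\frac{1}{d_t}$, independently of all other color-assignment trials in round $t$. The event ``$c \in \tilde S_t(u)$'' means $c$ survives both parts of the conflict-resolution phase: it is not removed in Phase II.1 (no neighbor of $u$ gets color $c$) and it is not removed by the Bernoulli thinning in Phase II.2. Crucially, whether a neighbor $v$ of $u$ is assigned $c$ is governed by trials disjoint from the trial ``$u$ is assigned $c$'' (these are different vertex--color pairs), and since $G$ is triangle-free the neighbors of $u$ are pairwise non-adjacent, so their assignments do not interfere with one another either. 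Hence the two events are independent, and I would write
\[
Pr\{\text{$u$ is assigned $c$ and $c \in \tilde S_t(u)$}\}
= Pr\{\text{$u$ is assigned $c$}\}\cdot Pr\{c \in \tilde S_t(u)\}
= \frac{1}{q^2}\frac{1}{d_t}\cdot Pr\{c \in \tilde S_t(u)\}.
\]

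Next I would substitute the value of $Pr\{c \in \tilde S_t(u)\}$. This was essentially computed inside the proof of Lemma \ref{lem:sg}: equation \eqref{eq:u_dr_c_naturally} gives $Pr\{c \notin \tilde S_t(u)\} = 1 - e^{-1/q}(1 + O(e_t + \frac{1}{d_t}))$, so $Pr\{c \in \tilde S_t(u)\} = e^{-1/q}(1 + O(e_t + \frac{1}{d_t}))$. The only subtlety is that in Lemma \ref{lem:sg} this probability is unconditional, whereas here it should be computed conditioned on $u$ being assigned $c$; but by the independence noted above, the conditioning changes nothing — the chain of inequalities in Lemma \ref{lem:sg} used only Assumption \ref{ass:indhyp} (the bound $d_t(u,c) \le q d_t(1+e_t)$) and the Phase II thinning, none of which is affected by whether $u$ itself received $c$. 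Multiplying the two factors yields
\[
\frac{1}{q^2}\frac{1}{d_t}\, e^{-1/q}\Bigl(1 + O\bigl(e_t + \tfrac{1}{d_t}\bigr)\Bigr),
\]
which is exactly the claimed expression.

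The main obstacle — and the step I would write out most carefully — is justifying the independence cleanly in the language of the algorithm's probability space, i.e. checking that the event $\{c \in \tilde S_t(u)\}$ truly involves no randomness tied to the trial ``$u$ gets $c$.'' Phase II.1 removal depends on assignments to $N_t(u)$; Phase II.2 removal depends on a fresh Bernoulli variable with a parameter ($Pr(\mathcal F_t(u,c))$ vs.\ $Desired\_\mathcal F_t$) that is itself a function of the Phase I assignments to neighbors of $u$, not of the assignment to $u$. Triangle-freeness is what guarantees that a neighbor $v$ of $u$ is not also adjacent to another neighbor $v'$, so conditioning on $v$'s color does not perturb the distribution of $v'$'s color; this is the only place the hypothesis is used, and I would flag it explicitly. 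Everything after the independence reduction is a direct quotation of the computation already done for Lemma \ref{lem:sg}, so no new estimates are needed.
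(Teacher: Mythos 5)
Your proof is correct and matches the paper's: split the event into the single Phase-I trial for the pair $(u,c)$ and the survival event $\{c\in\tilde S_t(u)\}$, observe their independence, and quote equation \eqref{eq:u_dr_c_naturally} for the second factor. One small caveat worth flagging: triangle-freeness plays no role in this particular lemma --- the Phase~I trials are mutually independent by construction of the algorithm (a product measure over all vertex--color pairs), not because the neighbors of $u$ are pairwise non-adjacent --- so your closing paragraph misattributes where the hypothesis is used; it enters only later, e.g.\ in Lemma~\ref{lem:tilde_a}, where the absence of common neighbors of $u$ and $v$ is what makes $\{v\notin\tilde D_t(u,c)\}$ nearly independent of the conditioning event $\{c\in\tilde S_t(u)\}$.
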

\begin{proof}
\begin{align*}
	\lefteqn{Pr\{ \text{$u$ is assigned  $c$ and $c \in \tilde S_t(u)$} \}} \;\;\;\;\;\; \\ 
		&= Pr\{ \text{$u$ is assigned $c$}\} Pr\{ c \in \tilde S_t(u)\} \\
		&= \frac 1 {q^2} \frac 1 {d_{t}} e^{-1/q}(1+O(e_{t} + \frac{1}{d_{t}}))
			& \langle \mbox{Equation \eqref{eq:u_dr_c_naturally}} \rangle
\end{align*}
\qed
\end{proof}
The following lemma is a consequence of the previous one.
\begin{lemma}
\label{lem:u_cl}
Let $u$ be an uncolored vertex at the beginning of round $t$. 
Then given Assumption \ref{ass:indhyp}, we have
$$Pr\{u\text{ is colored}\} 
	\geq \frac {q-1}{2 q^3} \frac {s_t} {d_{t}} e^{-1/q}(1+O(e_{t} + \frac {1}{d_{t}})).$$
\end{lemma}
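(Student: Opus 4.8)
The plan is to lower-bound the probability that $u$ receives a permanent color by the probability that \emph{some particular favorable scenario} occurs, namely that exactly one color $c \in S_t(u)$ survives the conflict resolution phase while also being assigned to $u$. More directly, I would bound $Pr\{u \text{ is colored}\}$ from below using inclusion--exclusion: if $A_c$ denotes the event ``$u$ is assigned $c$ and $c \in \tilde S_t(u)$'' (the event analyzed in Lemma \ref{lem:u_cl_c}), then $u$ is colored whenever $\bigcup_{c \in S_t(u)} A_c$ occurs, so by Bonferroni
\begin{equation*}
Pr\{u \text{ is colored}\} \;\geq\; \sum_{c \in S_t(u)} Pr(A_c) \;-\; \sum_{c \neq c'} Pr(A_c \cap A_{c'}).
\end{equation*}
The first sum is handled immediately by Lemma \ref{lem:u_cl_c}: it equals $s_t(u) \cdot \frac{1}{q^2}\frac{1}{d_t} e^{-1/q}(1+O(e_t + 1/d_t))$, and applying the induction hypothesis (Assumption \ref{ass:indhyp}) in the form $s_t(u) \geq (1-\alpha)s_t(1-e_t) \geq \frac{q-1}{q} s_t(1-e_t)$ gives a leading term of order $\frac{q-1}{q^3}\frac{s_t}{d_t} e^{-1/q}$, which is twice what we want.

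The main work is therefore to show the second-order correction sum is at most about half of the first sum, so that the difference is $\geq \frac{q-1}{2q^3}\frac{s_t}{d_t} e^{-1/q}(1+O(\cdot))$. For $c \neq c'$, the event $A_c \cap A_{c'}$ requires $u$ to be assigned both $c$ and $c'$, which are independent events each of probability $\frac{1}{q^2 d_t}$; hence $Pr(A_c \cap A_{c'}) \leq \left(\frac{1}{q^2 d_t}\right)^2$, and summing over the at most $s_t(u)^2 \leq (s_t)^2$ ordered pairs gives a bound of order $\frac{s_t^2}{q^4 d_t^2}$. Comparing to the first sum (order $\frac{s_t}{q^2 d_t}$), the ratio is $O\!\left(\frac{s_t}{q^2 d_t}\right)$, which is small precisely because the first stage runs only while $d_t/s_t \geq 1/q^2$, i.e. $s_t/d_t \leq q^2$ — wait, that gives ratio $O(1)$, not $o(1)$. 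So the cleaner route is to absorb the constant: choose the split so that $\frac{1}{q^2}$ in the first sum minus the $O(1/q^2)$-type correction still leaves at least $\frac{q-1}{2q^3}$; since $q \geq 2$ the slack factor of $\frac12$ (and the extra $1/q$ from $s_t(u)/s_t \geq (q-1)/q$) is built in exactly for this. I would make this precise by writing $Pr\{u \text{ colored}\} \geq \sum_c Pr(A_c)\big(1 - \max_c \sum_{c' \neq c} Pr(A_{c'} \mid A_c)\big)$ and bounding the conditional-sum by $s_t(u)\cdot \frac{1}{q^2 d_t} \leq \frac{1}{q^2}\cdot\frac{s_t(u)}{d_t}$, then noting this is bounded by a constant less than $1/2$ once the stage-termination condition and the choice $q \geq 2$ are used — this is where the specific constant $\frac{q-1}{2q^3}$ (rather than $\frac{q-1}{q^3}$) pays off.

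The step I expect to be the genuine obstacle is controlling that conditional sum $\sum_{c' \neq c} Pr(A_{c'} \mid A_c)$ uniformly: conditioning on ``$u$ is assigned $c$ and $c$ survives'' could in principle correlate with whether $c'$ survives, since both survival events depend on the colors assigned to $N_t(u)$. The saving grace is triangle-freeness combined with the structure of the assignment: the event ``$u$ is assigned $c$'' is independent of everything happening at the neighbors, and ``$c$ survives at $u$'' depends only on which neighbors got color $c$, whereas ``$c'$ survives at $u$'' depends on which neighbors got color $c'$ — and a single neighbor is assigned at most... actually a neighbor can be assigned several colors, so these are not independent, but the dependence only helps (a neighbor assigned $c$ is no more likely to also be assigned $c'$, by the independent Bernoulli design of Phase I). Hence $Pr(A_{c'}\mid A_c) \leq Pr\{u \text{ assigned } c'\} \cdot Pr\{c' \text{ survives}\} / (\text{something} \geq 1)$, and in fact $\leq \frac{1}{q^2 d_t}\cdot e^{-1/q}(1+O(e_t+1/d_t))$ by the same computation as in Lemma \ref{lem:u_cl_c} after checking independence of the assignment of $c'$ to $u$ from the $c$-related conditioning. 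Making this monotonicity/independence argument airtight — rather than just plausible — is the crux; once it is in hand, the rest is the routine arithmetic sketched above, collecting the $O(e_t + 1/d_t)$ error terms via Lemma \ref{lem:O}.
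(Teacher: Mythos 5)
Your plan is in the right spirit, but there is a real gap at exactly the step you flag as the crux, and it is a gap the paper sidesteps entirely. The events $A_c = \{u\text{ is assigned }c\} \cap \{c \in \tilde S_t(u)\}$ are not merely ``plausibly weakly correlated''; they are \emph{mutually independent}, and this is the whole point of the Second Modification described in Section~\ref{sec:sketch}. In Phase~I each pair (vertex, color) gets its own independent Bernoulli coin, so $\{u\text{ assigned }c\}$ and $\{u\text{ assigned }c'\}$ are independent for $c \neq c'$; the survival event $\{c \in \tilde S_t(u)\}$ is a function of the $(v,c)$ coins for $v \in N_t(u)$ together with a separate Phase~II.2 Bernoulli for color $c$, while $\{c' \in \tilde S_t(u)\}$ is a function of the disjoint set of $(v,c')$ coins and the $c'$-Bernoulli. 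No coin is shared, so the $A_c$ are independent over $c$, full stop --- your worry about ``a neighbor being assigned several colors'' is a red herring, since the assignments of distinct colors to the same neighbor are themselves independent coins. The paper uses this independence directly to write
\begin{equation*}
Pr\{u\text{ is colored}\} \;=\; 1 - \prod_{c \in S_t(u)}\bigl(1 - Pr(A_c)\bigr),
\end{equation*}
then applies Lemma~\ref{lem:u_cl_c}, the lower bound $s_t(u) \geq \frac{q-1}{q}s_t(1-e_t)$ from Assumption~\ref{ass:indhyp}, $(1-x)^m \leq e^{-mx}$, and finally $1 - e^{-y} \geq y/2$ for $y \leq 1$ (which holds since $s_t/d_t \leq q^2$ in the first stage and $q \geq 2$); the factor $\frac12$ in the statement is there precisely to absorb the last inequality.

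Granting independence, your Bonferroni route can be made to work, but it is strictly more effort and introduces a requirement the paper does not need: to show $\frac12\sum_c Pr(A_c) \leq \frac12$ you must upper-bound $\sum_c Pr(A_c) \approx s_t(u)/(q^2 d_t)$, which needs an \emph{upper} bound $s_t(u) \lesssim s_t$ in addition to the lower bound in Assumption~\ref{ass:indhyp}. Such an upper bound is morally true (it follows by iterating the upper half of the event $\mathcal{\tilde S}$ in Lemma~\ref{lem:sg}), but it is not part of the stated induction hypothesis, so you would have to carry it along separately. The paper's route needs only the lower bound on $s_t(u)$ because $1-(1-x)^m$ is \emph{increasing} in $m$, so shrinking the exponent only weakens the claim in the right direction. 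In short: recognize the exact independence, use the product formula, and the ``routine arithmetic'' you sketch becomes the entire proof.
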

\begin{proof}
Consider the event
$$ \{u \mbox{ is colored}\} = 
	\bigcup_{c \in S_t(u)} \{u \mbox{ is assigned } c \mbox{ and } c \in \tilde S_t(u)\}.$$
Since the events in the union on the right hand side of the equation above are independent,
$$Pr\{u \mbox{ is colored}\} = 1 - 
	\prod_{c \in S_t(u)}
		(1 - Pr\{u \mbox{ is assigned } c \mbox{ and } c \in \tilde S_t(u)\}).$$
Now using Lemma \ref{lem:u_cl_c}, we get
\begin{align*}
\lefteqn{Pr \{u \mbox{ is colored}\}} \;\;\;\;\;\; \\ 
	&\geq 1 - (1 - \frac 1 {q^2} \frac 1 {d_t} e^{-1/q}(1 + O(e_t + \frac 1 {d_t})))^{s_t(u)}\\
	&\geq 1 - (1 - \frac 1 {q^2} \frac 1 {d_t} e^{-1/q}
		(1 + O(e_t + \frac 1 {d_t})))^{\frac {q-1}q s_t(1-e_t)} 
		& \langle \mbox{Assumption \ref{ass:indhyp}} \rangle \\
	&\geq 1 - exp(- \frac {q-1} {q^3} \frac {s_t} {d_t} e^{-1/q}(1 + O(e_t + \frac 1 {d_t}))) \\
	&\geq 1 - (1 - \frac {q-1} {2q^3} \frac {s_t} {d_t} e^{-1/q}(1 + O(e_t + \frac 1 {d_t}))) 
		& \langle \mbox{since $q \geq 2$} \rangle \\
	&=  \frac {q-1} {2q^3} \frac {s_t} {d_t} e^{-1/q}(1 + O(e_t + \frac 1 {d_t})).
\end{align*}
\qed
\end{proof}
\begin{samepage}
We will need the following definitions.
\begin{definition} {\ } \\
\begin{itemize}
\item
Let $\tilde D_t(u,c)$ be the set of uncolored vertices that have color $c$
in their palettes  and are uncolored in round $t$, just before the cleanup phase. That is,
$$\tilde D_t(u,c) = D_t(u,c) \setminus (\{v | c \notin \tilde S _t(v)\} \cup \{v | v \mbox{ is colored in round } t\}).$$
\item
Let $\tilde d_t(u,c)$ be the size of $\tilde D_t(u,c)$.
\item
$ \bar d_{t}(u) := \sum_{c \in \tilde S_{t}(u)} \tilde d_{t}(u,c) = \sum_{c \in S_{t}(u)} 1_{\{c \in \tilde S_{t}(u)\}} \tilde d_{t}(u,c) $
\item
$ \tilde d_{t}(u) := \frac{\bar d_{t}(u)}{\tilde s_{t}(u)}$
\end{itemize}
\end{definition}
\end{samepage}
Now consider the event
\begin{equation}
\label{eq:def_tilde_a}
\mathcal{\tilde A} := \{\forall u \in V(G_{t+1}), \tilde d_{t}(u) 
\leq d_{t}(u) (1 - \frac {q-1}{2q^3} \frac {s_{t}}{d_{t}} e^{-1/q})e^{-1/q}(1 + O(e_{t} + \sqrt{\frac{\psi} {s_{t}}} + \frac{1}{d_{t}} + \sqrt{\frac{\psi d_{t}}{s_{t} d_{t}(u)}}))\}.
\end{equation}

\begin{lemma}
\label{lem:tilde_a}
Given Assumption \ref{ass:indhyp}, we have
$$Pr(\mathcal{\tilde A}) \geq 1 - e^{-\psi}O(n^2).$$
\end{lemma}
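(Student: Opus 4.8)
The plan is to prove concentration of $\tilde d_t(u)$ for each fixed $u \in V(G_{t+1})$ and then take a union bound over the (at most $n$) uncolored vertices, which accounts for the $O(n^2)$ factor once we also absorb an extra union-bound cost internal to each vertex. First I would compute the expectation $E[\bar d_t(u)]$, where $\bar d_t(u) = \sum_{c \in S_t(u)} 1_{\{c \in \tilde S_t(u)\}} \tilde d_t(u,c)$. For a fixed color $c \in S_t(u)$ and a fixed neighbor $v \in D_t(u,c)$, the vertex $v$ contributes to $\tilde d_t(u,c)$ exactly when $c \in \tilde S_t(v)$ and $v$ is not colored in round $t$; since $G$ is triangle-free, the color-assignment events around $u$ and around $v$ involve disjoint vertex sets (no common neighbor), so one can multiply $\Pr\{c \in \tilde S_t(v)\} = e^{-1/q}(1+O(e_t + 1/d_t))$ from \eqref{eq:u_dr_c_naturally} against the complement of the coloring probability from Lemma \ref{lem:u_cl}, namely $1 - \frac{q-1}{2q^3}\frac{s_t}{d_t}e^{-1/q}(1+O(\cdot))$ (the $\geq$ in Lemma \ref{lem:u_cl} will need to be matched by a corresponding upper bound, or one argues that the extra colored vertices only decrease $\tilde d_t$, which is the direction we want). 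Summing over $c$ weighted by $1_{\{c \in \tilde S_t(u)\}}$ and dividing by $\tilde s_t(u)$ — using the already-established concentration of $\tilde s_t(u)$ from Lemma \ref{lem:sg} — gives the claimed bound $d_t(u)(1 - \frac{q-1}{2q^3}\frac{s_t}{d_t}e^{-1/q})e^{-1/q}$ up to the error factor, where the $d_t(u)$ appears because $\sum_{c \in \tilde S_t(u)} d_t(u,c) \approx \tilde s_t(u) \cdot e^{-1/q} \cdot d_t(u) \cdot (\text{coloring loss})$ after noting $d_t(u) = \mathrm{Average}_{c \in S_t(u)} d_t(u,c)$ and controlling the reweighting from $S_t(u)$ to $\tilde S_t(u)$.

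The concentration step is the heart of the argument. I would apply Azuma's inequality (Theorem \ref{thm:azuma}) to the random variable $\bar d_t(u)$ with the trials being the color assignments: trial indexed by each pair $(w, c')$, or more conveniently by each vertex $w$ whose assigned set of colors $T_w$ is revealed one at a time. The Lipschitz constant of $\bar d_t(u)$ with respect to a single such trial needs care: a vertex $w$ at distance $1$ from $u$ can be a member of many $\tilde D_t(u,c)$ (one for each color in its palette), so changing $T_w$ could in principle move $\bar d_t(u)$ by up to $|S_t(w)| = O(s_t)$; a vertex $w$ at distance $2$ from $u$ that gets assigned color $c$ can knock out $c$ from the palettes of up to $\Delta$ common-with-$u$-neighbors... except that triangle-freeness forbids this, so a distance-$2$ vertex influences at most one vertex in any $D_t(u,c)$, and across colors its influence is $O(1)$ per affected neighbor. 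The dominant Lipschitz terms are thus from the $\eta_t(u) \leq \eta_t(1+e_t)$ neighbors of $u$, each with bounded influence, plus the distance-$2$ vertices. Plugging $\sum \alpha_i^2 = O(\eta_t \cdot \text{poly}) $ into Azuma and deviation $t = \Theta(\sqrt{\psi \cdot \bar d_t(u)})$ — matching the error term $\sqrt{\psi d_t / (s_t d_t(u))}$ that multiplies through after dividing by $\tilde s_t(u) \approx s_t(u)$ — should yield failure probability $e^{-\psi}O(1)$ per vertex, hence $e^{-\psi}O(n)$, and combined with the $e^{-\psi}O(n)$ from invoking Lemma \ref{lem:sg} gives $e^{-\psi}O(n)$; the stated $O(n^2)$ is a safe over-estimate (possibly the author reveals trials per-color-per-vertex and unions over $n$ colors as well).

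The main obstacle I anticipate is pinning down the Azuma Lipschitz constants cleanly — specifically, arguing that no single color-assignment trial moves $\bar d_t(u)$ by more than a modest amount even though a neighbor $w$ of $u$ sits in many of the sets $\tilde D_t(u,c)$ simultaneously. The resolution must exploit triangle-freeness twice: once to decouple the palette-survival events of $u$'s neighbors from $u$'s own neighborhood, and once to bound how a distance-$2$ vertex's single assigned color propagates. A secondary subtlety is that $\bar d_t(u)$ is a sum of $\tilde s_t(u)$-many terms where $\tilde s_t(u)$ is itself random; I would handle this by working on the event $\mathcal{\tilde S}$ and noting that conditioning on the full outcome (not just $\tilde s_t(u)$) keeps $\bar d_t(u)$ a legitimate function of the trials, then dividing by the concentrated $\tilde s_t(u)$ at the end. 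The error term $\sqrt{\psi d_t/(s_t d_t(u))}$ in \eqref{eq:def_tilde_a} is exactly the normalized Azuma deviation $\sqrt{\psi \bar d_t(u)}/\bar d_t(u)$ rewritten using $\bar d_t(u) \approx \tilde s_t(u) \tilde d_t(u) \approx s_t(u) d_t(u) e^{-2/q}/q^2 \cdot (\ldots)$ and $s_t(u) \approx \frac{q-1}{q} s_t$, so once the expectation and Lipschitz bounds are in hand the error bookkeeping is routine.
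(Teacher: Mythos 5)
Your expectation computation follows the paper's route closely: you decompose $\{v\notin\tilde D_t(u,c)\}$ into the coloring event and the palette-removal event, invoke triangle-freeness to decouple the neighborhoods of $u$ and of $v\in D_t(u,c)$, pick up one $e^{-1/q}$ from $\Pr\{c\in\tilde S_t(u)\}$ and another from $\Pr\{c\in\tilde S_t(v)\}$, and correctly observe that the one-sided bound from Lemma~\ref{lem:u_cl} is in the favorable direction (more coloring only shrinks $\tilde d_t(u,c)$). So far this matches the paper.

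The concentration step is where you genuinely diverge, and there you have a gap you name but do not close. You set up Azuma with trials indexed by vertices, revealing $T_w$ (the colors assigned to $w$) one vertex at a time, and then you note, correctly, that a single neighbor $w$ of $u$ appears in $\tilde D_t(u,c)$ for every $c\in S_t(u)\cap S_t(w)$, so flipping $T_w$ can move $\bar d_t(u)$ by as much as $O(s_t)$. Nothing in what follows dissolves this: ``each with bounded influence'' is a wish, not an estimate, and if you actually plug $\alpha_w=O(s_t)$ for the $\eta_t(u)$ neighbors of $u$ you get $\sum\alpha_i^2=O(\eta_t s_t^2)$, hence a deviation of order $s_t\sqrt{\psi\eta_t}$ for $\bar d_t(u)$, i.e.\ about $\sqrt{\psi\eta_t}$ for $\tilde d_t(u)$ after dividing by $\tilde s_t(u)\approx s_t e^{-1/q}$. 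That swamps the target absolute error $d_t(u)\sqrt{\psi d_t/(s_t d_t(u))}=\sqrt{\psi d_t d_t(u)/s_t}$ once $\eta_t$ and $d_t$ separate. Your alternative guess $\Theta(\sqrt{\psi\bar d_t(u)})\approx\sqrt{\psi s_t(u)d_t(u)}$ for the deviation is not derivable from the Lipschitz constants you stated, and is also off from the paper's by a factor of $\sqrt{d_t}$, so the bookkeeping at the end of your proposal does not actually reproduce the error term in the definition of $\mathcal{\tilde A}$.

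The paper sidesteps the whole problem by choosing the transposed decomposition: the trials are $T_1,\dots,T_m$ with $m=s_t(u)$, where $T_i$ is the set of vertices assigned color $c_i\in S_t(u)$. Since assignments of distinct colors are independent, this is a legitimate exposure martingale, and the per-trial effect on $\bar d_t(u)$ is $\alpha_i\leq d_t(u,c_i)$ (revealing an entire color at once localizes the damage to the single summand $1_{\{c_i\in\tilde S_t(u)\}}\tilde d_t(u,c_i)$). This gives $\sum\alpha_i^2\leq\sum_{c\in S_t(u)}d_t^2(u,c)$, which the paper then bounds with an extremal argument: subject to the cap $d_t(u,c)\leq qd_t(1+e_t)$ from Assumption~\ref{ass:indhyp} and the constraint $\sum_c d_t(u,c)=s_t(u)d_t(u)$, the sum of squares is maximized when each $d_t(u,c)$ sits at $0$ or at $qd_t$, yielding $\sum\alpha_i^2=O(s_t(u)\,d_t\,d_t(u))$ and hence the error term $\sqrt{\psi d_t/(s_t d_t(u))}$ exactly as it appears in $\mathcal{\tilde A}$. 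That color-indexed exposure together with the extremal bound on $\sum_c d_t^2(u,c)$ is the missing ingredient; without it the concentration estimate in your proposal does not close.
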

\begin{proof}
Let $u$ be an uncolored vertex at the beginning of round $t$,
and let $c$ be a color in its palette.
For a vertex $v$ in $D_{t}(u,c)$, 
Lemma \ref{lem:u_cl_c} implies that $Pr\{v$ is colored with $d\} = O(1 / {d_t})$
for any color $d$ in $S_t(v)$.
Thus,
\begin{eqnarray*}
Pr(\{c \notin \tilde S_t(v) \} \cap \{v \mbox{ is colored}\}) 
	&=& \sum_{d \in S_t(v)} Pr(\{c \notin \tilde S_t(v) \} \cap \{v \mbox{ is colored with } d\}) \\
	&=& \sum_{d \in S_t(v)} Pr\{c \notin \tilde S_t(v) | v \mbox{ is colored with } d \} 
			Pr\{v \mbox{ is colored with } d\} \\
	&=& Pr\{c \notin \tilde S_t(v)\}(1 + O(\frac 1 {d_t})) \sum_{d \in S_t(v)}  
			Pr\{v \mbox{ is colored with } d\} \\
	&=& Pr\{c \notin \tilde S_t(v)\}Pr\{v \mbox{ is colored}\}(1 + O(\frac 1 {d_t})).
\end{eqnarray*}
A straightforward computation now shows that
\begin{equation}
\label{eq:v_dr_c_n_cln}
Pr(\{c \notin \tilde S_t(v) \} \cap \{v \mbox{ is not colored}\}) = \\
	Pr\{c \notin \tilde S_t(v)\}Pr\{v \mbox{ is not colored}\}(1 + O(\frac 1 {d_t})).
\end{equation}
Now, $v$ is removed from the set $D_t(u,c)$ if either it is colored or color $c$
is removed from its palette. This means that event
\begin{eqnarray*}
\{v \notin \tilde D_t(u,c) \} = \{\text{$v$ is colored}\} 
	\cup (\{c \notin \tilde S_t(v)\} \cap \{ \text{$v$ is not colored}\}).
\end{eqnarray*}
Since $G$ is triangle-free, $u$ and $v$ do not have any common neighbors.
This implies that
\begin{align*}
\lefteqn{Pr\{v \notin \tilde D_t(u,c) | c \in \tilde S_t(u)\}} \;\;\;\;\;\; \\
	&= Pr\{v \notin \tilde D_t(u,c)\}(1+O(\frac 1 {d_t})) \\
	&= (Pr\{\text{$v$ is colored}\}
		+ Pr(\{c \notin \tilde S_t(v) \} \cap \{ \text{$v$ is not colored}\}))(1+O(\frac 1 {d_t})) \\
	&= (Pr\{\text{$v$ is colored}\} 
		+ Pr\{c \notin \tilde S_t(v) \} Pr\{ \text{$v$ is not colored}\}) (1+O(\frac 1 {d_t})) 
		& \langle \mbox{equation \eqref{eq:v_dr_c_n_cln}}\rangle \\
	&= (Pr\{\text{$v$ is colored}\} 
		+ (1 - e^{-1/q})(1 - Pr\{\text{$v$ is colored}\}))(1 + O(e_t + \frac {1}{d_{t}})) 
		& \langle \mbox{equation \eqref{eq:u_dr_c_naturally}}\rangle \\		
	&= (1 - (1 - Pr\{\text{$v$ is colored}\}) e^{-1/q})(1 + O(e_t + \frac {1}{d_{t}})) \\
	&\geq (1 - (1 - \frac {q-1} {2q^3} \frac {s_{t}} {d_{t}} e^{-1/q}) e^{-1/q})
											(1+ O(e_{t} + \frac {1}{d_{t}}))
		& \langle \mbox{Lemma \ref{lem:u_cl}}\rangle.
\end{align*}
Using linearity of expectation
\begin{equation}
E[\tilde d_{t}(u,c) | c \in \tilde S_{t}(u)] 
= E[\tilde d_{t}(u,c)](1+O(\frac 1{d_t})) 
\leq d_{t}(u,c)(1 - \frac {q-1}{2q^3} \frac {s_{t}} {d_{t}} e^{-1/q}) e^{-1/q}(1+ O(e_{t} + \frac {1}{d_{t}})).
\end{equation}
Now using the above bound
\begin{eqnarray*}
E[\bar d_{t}(u)] &=& \sum_{c \in S_{t}(u)} Pr\{c \in \tilde S_{t}(u)\} E[\tilde d_{t}(u,c) | c \in \tilde S_{t}(u)] \\
	&\leq& e^{-1/q} \sum_{c \in S_{t}(u)} d_{t}(u,c)(1 - \frac {q-1}{2q^3} \frac {s_{t}}{d_{t}} e^{-1/q}) e^{-1/q}(1+ O(e_{t} + \frac {1}{d_{t}})) \\
	&\leq& e^{-1/q} s_{t}(u) d_{t}(u)(1 - \frac {q-1}{2q^3} \frac {s_{t}}{d_{t}} e^{-1/q}) e^{-1/q}(1+ O(e_{t} + \frac {1}{d_{t}}))
\end{eqnarray*}

For concentration of $\bar d_t(u)$,
suppose $s_t(u) = m$.
Let $c_1, \dots, c_m$ be the colors in $S_t(u)$.
Then $\bar d_t(u)$ may be considered a random variable determined by the random trials
$T_1, \dots, T_m$, where $T_i$ is the set of vertices in $G_t$ that are assigned color $c_i$
in round $t$.
Observe that $T_i$ affects $\bar d_t(u)$ by at most $d_t(u,c)$.

Thus $\sum \alpha_i^2$ in the statement of Theorem \ref{thm:azuma}
is less that $\sum_{c \in S_t(u)} d_t^2(u,c)$.
This upperbound is maximized when the $d_{t}(u,c)$ take the extreme values of
$q d_{t}$ and $0$ subject to $d_{t}(u) = \frac 1 {s_{t}(u)} \sum_{c \in S_{t}(u)} d_{t}(u,c)$.
Thus
$$\sum \alpha_i^2 \leq O( (d_{t})^2 d_{t}(u) s_{t}(u) / d_{t}) \leq O(s_{t}(u) d_{t} d_{t}(u))$$
Using Theorem \ref{thm:azuma}, we get
\begin{align*}
Pr\{& \bar d_{t}(u) 
- e^{-1/q} s_t(u) d_{t}(u)(1 - \frac {q-1} {2q^3} \frac {s_{t}} {d_{t}} e^{-1/q}) e^{-1/q}(1 + O(e_{t} + \frac {1}{d_{t}})) \geq O(\sqrt{\psi s_t(u)d_{t} d_{t}(u)}) \} \\
	&\;\;\;\;\;\;\;\;\;\;\;\;  \leq e^{-\psi} O(1).
 \end{align*}
Lemma \ref{lem:sg} says that the event $\mathcal{\tilde S}$ 
occurs with probability $1-e^{-\psi}O(n)$. Thus
\begin{align*}
Pr\{& \frac{\bar d_{t}(u)}{\tilde s_{t}(u)} 
- d_{t}(u)(1 - \frac {q-1} {2q^3} \frac {s_{t}} {d_{t}} e^{-1/q}) e^{-1/q}(1 + O(e_{t} + \frac {1}{d_{t}} + \sqrt{\frac{\psi}{s_{t}}})) \geq O(\sqrt{\frac{\psi d_{t} d_{t}(u)}{s_{t}}}) \} \\
	& \;\;\;\;\;\;\;\;\;\;\;\; \leq e^{-\psi} O(n).
 \end{align*}
Therefore
$$Pr\{\frac{\bar d_{t}(u)}{\tilde s_{t}(u)} 
\geq d_{t}(u)(1 - \frac {q-1} {2q^3}\frac {s_{t}}{d_{t}} e^{-1/q}) e^{-1/q}
(1 + O(e_{t} + \sqrt{\frac{\psi} {s_{t}}} + \frac{1}{d_{t}} + \sqrt{\frac{\psi d_{t}}{s_{t} d_{t}(u)}}))\} 
\leq e^{-\psi} O(n).$$
We end the proof using the union bound for probabilities.
\qed
\end{proof}

Note that $\frac{\bar d_{t}(u)}{\tilde s_{t}(u)}$ is the average $|\tilde D_t(u,c)|$ 
at a vertex $u$ at the end phase II.
Phase III only brings this average down by removing colors with large $d_{u,c}$.
Thus we get the next lemma almost immediately.
Consider the event
$$\mathcal{A} := \{\forall u \in V(G_{t+1}), d_{t+1}(u) \leq d_{t+1}(1 +O( e_{t} + \sqrt{\frac{\psi} {s_{t}}} + \frac{1}{d_{t}})\}.$$

\begin{lemma}
\label{lem:A}
Given Assumption \ref{ass:indhyp}, we have
$$Pr(\mathcal{A}) \geq 1 - e^{-\psi} O(n^2).$$
\end{lemma}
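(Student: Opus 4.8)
The plan is to read Lemma~\ref{lem:A} off Lemma~\ref{lem:tilde_a} with no new concentration inequality: I would simply condition on the event $\mathcal{\tilde A}$ of \eqref{eq:def_tilde_a}, which by Lemma~\ref{lem:tilde_a} holds with probability at least $1 - e^{-\psi}O(n^2)$, and then run a short deterministic argument carrying the pre-cleanup average $\tilde d_t(u)$ down to the post-cleanup average $d_{t+1}(u)$. The probability bound in the conclusion is then inherited verbatim, with no further union bound.

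First I would record two facts about Phase~III of round~$t$. (i) Phase~III only deletes colors from palettes and never uncolors a vertex, so $D_{t+1}(u,c) \subseteq \tilde D_t(u,c)$ and hence $d_{t+1}(u,c) \le \tilde d_t(u,c)$ for every $u$ and $c$. (ii) The palette that enters Phase~III at vertex $u$ is exactly $\tilde S_t(u)$, and Phase~III removes from it precisely the colors $c$ with $d_{t+1}(u,c) \ge q\gamma d_{t+1}$; since $\gamma \ge 1$, $q \ge 1$, and $\gamma$ is chosen so that $\tfrac{1-q\alpha}{1-\alpha}\gamma d_{t+1}$ already bounds $Average_{c \in \tilde S_t(u)} d_{t+1}(u,c)$ with $\tfrac{1-q\alpha}{1-\alpha}\le 1$, the threshold $q\gamma d_{t+1}$ is at least that average, so deleting those colors can only lower it — this is Lemma~\ref{lem:mu1}. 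Combining (i) and (ii), on $\mathcal{\tilde A}$ and for every $u \in V(G_{t+1})$,
$$d_{t+1}(u) \ \le\ Average_{c \in \tilde S_t(u)} d_{t+1}(u,c) \ \le\ \frac{1}{\tilde s_t(u)} \sum_{c \in \tilde S_t(u)} \tilde d_t(u,c) \ =\ \tilde d_t(u).$$

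It then remains to shape the bound on $\tilde d_t(u)$ supplied by $\mathcal{\tilde A}$ into the stated form. By Assumption~\ref{ass:indhyp}, $d_t(u) \le \tfrac{1-q\alpha}{1-\alpha}d_t(1+e_t) \le d_t(1+e_t)$ because $q \ge 2$ forces $\tfrac{1-q\alpha}{1-\alpha}\le 1$, and by the recurrence~\eqref{eq:rec_stage_1}, $d_t\bigl(1-\tfrac{q-1}{2q^3}\tfrac{s_t}{d_t}e^{-1/q}\bigr)e^{-1/q} = d_{t+1}$. Substituting the defining inequality of $\mathcal{\tilde A}$ and splitting off its last error term, the main part is at most $d_{t+1}(1+e_t)\bigl(1+O(e_t+\sqrt{\psi/s_t}+1/d_t)\bigr) = d_{t+1}\bigl(1+O(e_t+\sqrt{\psi/s_t}+1/d_t)\bigr)$, while the stray part $d_t(u)\bigl(1-\tfrac{q-1}{2q^3}\tfrac{s_t}{d_t}e^{-1/q}\bigr)e^{-1/q}\,O\bigl(\sqrt{\psi d_t/(s_t d_t(u))}\bigr)$ is absorbed by pulling the outer factor $d_t(u)$ under the root: it becomes $\tfrac{d_{t+1}}{d_t}\,O\bigl(\sqrt{\psi d_t\, d_t(u)/s_t}\bigr)$, and then $d_t(u) = O(d_t)$ turns this into $d_{t+1}\,O(\sqrt{\psi/s_t})$, which is of the desired shape. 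I expect this last bookkeeping — verifying that the error term $\sqrt{\psi d_t/(s_t d_t(u))}$, which blows up when $d_t(u)$ is small, really does collapse into $O(\sqrt{\psi/s_t})$ once multiplied by the $d_t(u)$ standing in front of it — together with the careful justification in fact~(ii) that Phase~III never raises a palette average, to be the only nonroutine points; everything else is direct substitution.
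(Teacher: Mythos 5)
Your proposal is correct and follows essentially the same route as the paper. You condition on $\mathcal{\tilde A}$, observe that Phase III can only lower the average $d$-value over the palette (the paper states this as a remark immediately before Lemma~\ref{lem:A}, while you justify it via the inclusion $D_{t+1}(u,c)\subseteq\tilde D_t(u,c)$ and the fact that the threshold $q\gamma d_{t+1}$ sits above the pre-cleanup average), and then perform the same error-term bookkeeping: peel off $O\bigl(\sqrt{\psi d_t/(s_t\,d_t(u))}\bigr)$, push $d_t(u)$ under the root, and use $d_t(u)\le d_t(1+e_t)$ to collapse it into $O(\sqrt{\psi/s_t})$. No new concentration or union bound is introduced, so $Pr(\mathcal{A})\ge Pr(\mathcal{\tilde A})\ge 1-e^{-\psi}O(n^2)$, matching the paper. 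Your treatment of the Phase~III monotonicity is slightly more careful than the paper's one-line remark, but the underlying argument is identical.
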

\begin{proof}
Assume the occurrence event $\mathcal{\tilde A}$, as defined in equation \eqref{eq:def_tilde_a}, and let $u$ be a vertex in $V(G_{t+1})$.
Then
\begin{align*}
\lefteqn{d_{t}(u)(1 - \frac {q-1} {2q^3} \frac {s_{t}}{d_{t}} e^{-1/q}) e^{-1/q}
 (1 + O(e_{t} + \sqrt{\frac{\psi} {s_{t}}} + \frac{1}{d_{t}} + \sqrt{\frac{\psi d_{t}}{s_{t} d_{t}(u)}})) }
 	\;\;\;\;\;\; \\
&= d_{t}(u)(1 - \frac {q-1} {2q^3} \frac {s_{t}}{d_{t}} e^{-1/q}) e^{-1/q}
(1 + O(e_{t} + \sqrt{\frac{\psi} {s_{t}}} + \frac{1}{d_{t}})) + O(\sqrt{\frac{\psi d_{t}(u) d_{t}}{s_{t}}}) \\
&= d_{t}(u)(1 - \frac {q-1} {2q^3} \frac {s_{t}}{d_{t}} e^{-1/q}) e^{-1/q}
(1 + O(e_{t} + \sqrt{\frac{\psi} {s_{t}}} + \frac{1}{d_{t}})) + d_{t} O(\sqrt{\frac{\psi d_{t}(u)}{s_{t} d_{t}}}) \\
&\leq d_{t}(1 - \frac {q-1} {2q^3} \frac {s_{t}}{d_{t}} e^{-1/q}) e^{-1/q}
(1 + O(e_{t} + \sqrt{\frac{\psi} {s_{t}}} + \frac{1}{d_{t}})) + d_{t} O(\sqrt{\frac{\psi}{s_{t}}}) \\
&= d_{t}(1 - \frac {q-1} {2q^3} \frac {s_{t}}{d_{t}} e^{-1/q}) e^{-1/q}
(1 + O(e_{t} + \sqrt{\frac{\psi} {s_{t}}} + \frac{1}{d_{t}})).
\end{align*}
Thus the event
\begin{align*}
\lefteqn{\{\frac{\bar d_{t}(u)}{\tilde s_{t}(u)} 
\geq d_{t}(1 - \frac {q-1} {2q^3} \frac {s_{t}}{d_{t}} e^{-1/q}) e^{-1/q}
(1 + O(e_{t} + \sqrt{\frac{\psi} {s_{t}}} + \frac{1}{d_{t}}))\} }
 	\;\;\;\;\;\; \\
&\subseteq \{\frac{\bar d_{t}(u)}{\tilde s_{t}(u)} 
\geq d_{t}(u)(1 - \frac {q-1} {2q^3} \frac {s_{t}}{d_{t}} e^{-1/q}) e^{-1/q}
(1 + O(e_{t} + \sqrt{\frac{\psi} {s_{t}}} + \frac{1}{d_{t}} + \sqrt{\frac{\psi d_{t}}{s_{t} d_{t}(u)}}))\}.
\end{align*}
Therefore
$$Pr(\mathcal{A}) \geq Pr(\mathcal{\tilde A}) \geq e^{-\psi} O(n^2).$$
\qed
\end{proof}

Next we show that in the cleanup phase of round $t$, 
no vertex discards so many colors that its palette size in round $t+1$ 
becomes less than
$\frac {q-1}q s_{t+1}(1-e_{t+1})$.
Consider the event
\begin{align}
\mathcal{S} := 
\{
\forall v \in V(G_{t+1}), & \exists \alpha \in [0, \frac 1 q] \text{ such that } \\
	 & s_{t+1}(u) \geq (1-\alpha) s_{t+1}(1 + O(e_{t} + \sqrt{\frac{\psi}{s_t}} + \frac{1} {d_{t}})), \\
	& d_{t+1}(u) \leq \frac{1-q \alpha}{1-\alpha} d_{t+1}(1+ O(e_{t} + \sqrt{\frac{\psi}{s_t}} + \frac{1} {d_{t}}))
	\}.
\end{align}
\begin{lemma}
\label{lem:S}
Given Assumption \ref{ass:indhyp}, we have
$$Pr(\mathcal{S}) \geq 1 - e^{-\psi} O(n^2).$$
\end{lemma}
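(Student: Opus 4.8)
The plan is to track a single vertex $u \in V(G_{t+1})$ through Phase~III of round $t$ and show that the $\alpha$ it ends up removing lands in $[0,1/q]$ with the claimed palette-size and average-degree bounds; a union bound over the at most $n$ vertices then finishes the proof. Recall from Lemma~\ref{lem:sg} that with probability $1 - e^{-\psi}O(n)$ the event $\mathcal{\tilde S}$ holds, so before cleanup $\tilde s_t(u) = s_t(u) e^{-1/q}(1 + O(e_t + \sqrt{\psi/s_t} + 1/d_t))$, and from Lemma~\ref{lem:tilde_a}/Lemma~\ref{lem:A} with probability $1 - e^{-\psi}O(n^2)$ the pre-cleanup average $\bar d_t(u)/\tilde s_t(u)$ is at most $d_{t+1}(1 + O(e_t + \sqrt{\psi/s_t} + 1/d_t))$ after using the recurrence \eqref{eq:rec_stage_1} to identify $d_t(u)(1 - \frac{q-1}{2q^3}\frac{s_t}{d_t}e^{-1/q})e^{-1/q}$ with $d_{t+1}\cdot d_t(u)/d_t$ and the hypothesis $d_t(u) \le d_t(1+e_t)$. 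Condition on both events; everything below is deterministic given the state after Phase~II.

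First I would read off the definition of $\alpha$ in Phase~III: $\alpha = \max(0,\min(1/q,\,1 - \tilde s_t(u)/s_{t+1}))$, so by construction $\alpha \in [0,1/q]$, and $s_{t+1}(u) = \tilde s_t(u) \ge (1-\alpha)s_{t+1}$ — in fact since $s_{t+1} = s_t e^{-1/q}$ and $\tilde s_t(u) = s_t(u)e^{-1/q}(1 + O(\cdot))$, clamping $\alpha$ only ever \emph{increases} the right-hand slack, giving $s_{t+1}(u) \ge (1-\alpha)s_{t+1}(1 + O(e_t + \sqrt{\psi/s_t} + 1/d_t))$, which is the first clause of $\mathcal{S}$. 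Next, $\gamma$ is chosen as the least element of $[1,\infty)$ with $\mathrm{Average}_{c \in S_{t+1}(u)} d_{t+1}(u,c) \le \frac{1-q\alpha}{1-\alpha}\gamma d_{t+1}$, where this average is taken \emph{before} the colors with $d_{t+1}(u,c) \ge q\gamma d_{t+1}$ are removed — i.e.\ it equals $\bar d_t(u)/\tilde s_t(u)$. Since that pre-cleanup average is $\le d_{t+1}(1 + O(\cdot))$ and $\frac{1-q\alpha}{1-\alpha} \ge 1$ whenever $\alpha \le 1/q$, the defining inequality for $\gamma$ is already satisfied with $\gamma = 1(1+O(\cdot))$, so the smallest valid $\gamma$ is $1 + O(e_t + \sqrt{\psi/s_t} + 1/d_t)$. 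Finally, applying Lemma~\ref{lem:mu1} with ``$\mu$'' $= \frac{1-q\alpha}{1-\alpha}\gamma d_{t+1}$, sample size $\tilde s_t(u)$, and the removed fraction (which is at most $1/(q\gamma) \le 1/q$, hence $\le \alpha'$ for the appropriate bookkeeping), the post-cleanup average $d_{t+1}(u)$ is at most $\frac{1-q\alpha}{1-\alpha} d_{t+1}$ times the cleanup shrink factor, and in any case $d_{t+1}(u) \le \frac{1-q\alpha}{1-\alpha}\gamma d_{t+1} = \frac{1-q\alpha}{1-\alpha} d_{t+1}(1 + O(e_t + \sqrt{\psi/s_t} + 1/d_t))$ since the average can only drop in Phase~III. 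That is the second clause of $\mathcal{S}$.

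The delicate point — and what I expect to be the main obstacle — is the interplay between the clamped $\alpha$ used in the $\gamma$-constraint $\frac{1-q\alpha}{1-\alpha}$ and the \emph{actual} fraction of colors removed by the threshold $q\gamma d_{t+1}$, which need not equal $\alpha$. One has to check that the choice of $\alpha$ in Phase~III (defined purely from the palette-size deficit $1 - |S_{t+1}(u)|/s_{t+1}$) is consistent with the $\alpha$ that makes the average bound in $\mathcal{S}$ hold, i.e.\ that the same $\alpha$ works in both the $s_{t+1}(u)$ and $d_{t+1}(u)$ inequalities simultaneously. The resolution is that $\alpha$ is \emph{fixed first} by the size deficit, and then $\gamma$ is chosen to accommodate \emph{that} $\alpha$: since $\frac{1-q\alpha}{1-\alpha}$ is decreasing in $\alpha$ on $[0,1/q]$ and the true removed fraction $1/(q\gamma)$ is automatically $\le \alpha$ is \emph{not} needed — all we use is $\frac{1-q\alpha}{1-\alpha}\ge 1$ to guarantee $\gamma$ exists and is $1+o(1)$, and Lemma~\ref{lem:mu1} then only ever helps. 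The remaining care is the error-term arithmetic, handled by Lemma~\ref{lem:O} to convert the $e^{-1/q}(1 - \frac{q-1}{2q^3}\frac{s_t}{d_t}e^{-1/q})$ products into clean multiplicative $(1+O(\cdot))$ factors matching $s_{t+1}$ and $d_{t+1}$ from \eqref{eq:rec_stage_1}; I would defer that to a one-line invocation rather than expand it.
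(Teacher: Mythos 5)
There is a genuine gap, and it is located exactly where you flag the ``delicate point'' but then wave it away. The witness $\alpha$ you choose — the algorithm's clamped quantity $\alpha = \max(0,\min(1/q,\,1-\tilde s_t(u)/s_{t+1}))$ — does \emph{not} satisfy the first clause of $\mathcal{S}$. You assert $s_{t+1}(u)=\tilde s_t(u)$, but that equality is false: Phase~III removes from $S_{t+1}(u)$ every color $c$ with $d_{t+1}(u,c)\ge q\gamma d_{t+1}$, and by Markov applied to an average of roughly $\frac{1-q\alpha}{1-\alpha}\gamma d_{t+1}$ that can be a constant fraction of the palette, not an $O(e_t+\sqrt{\psi/s_t}+1/d_t)$ fraction. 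Since $(1-\alpha)s_{t+1}$ essentially equals $\tilde s_t(u)$ by construction of $\alpha$, you would need $s_{t+1}(u)\ge \tilde s_t(u)\bigl(1+O(\cdot)\bigr)$, which is simply not available. Your attempted ``resolution'' also rests on a false inequality: $\frac{1-q\alpha}{1-\alpha}\ge 1$ holds only at $\alpha=0$; for $q>1$ the function is strictly decreasing in $\alpha$ and is $\le 1$ throughout $[0,1/q]$.

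The idea the proof actually needs — and the one the paper uses — is the padding trick combining Lemma~\ref{lem:mu2} with Lemma~\ref{lem:mu1}. One takes the $\alpha_0$ furnished by the induction hypothesis at round $t$, imagines adding $\frac{\alpha_0}{1-\alpha_0}\tilde s_t(u)$ dummy colors each of value $q\gamma d_{t+1}$ to $\tilde S_t(u)$, so the padded palette has size $\ge s_{t+1}(1+O(\cdot))$ and, by Lemma~\ref{lem:mu2}, average $\le \gamma d_{t+1}$. Phase~III's threshold $q\gamma d_{t+1}$ then removes all dummies together with the oversized real colors, and Lemma~\ref{lem:mu1}, applied to the padded set with $\mu=\gamma d_{t+1}$, produces a \emph{new} fraction $\beta\le 1/q$ (the fraction of the padded set removed) for which $s_{t+1}(u)\ge(1-\beta)s_{t+1}(1+O(\cdot))$ and $d_{t+1}(u)\le\frac{1-q\beta}{1-\beta}d_{t+1}(1+O(\cdot))$ hold simultaneously. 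This $\beta$, not the algorithm's clamped $\alpha$, is the witness for $\mathcal{S}$. Without the padding step, you have no way to charge the Phase~III deletions and the round-$t$ deficit to a \emph{single} $\alpha$ that makes both inequalities of $\mathcal{S}$ true; your proof leaves that charge unpaid.

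Your handling of $\gamma$ and the final bound $d_{t+1}(u)\le\frac{1-q\alpha}{1-\alpha}\gamma d_{t+1}$ is fine as far as it goes (removal only lowers an average), but it is inert without the correct $\alpha$: pairing the algorithm's clamped $\alpha$ with a correct bound on $d_{t+1}(u)$ still fails $\mathcal{S}$, because $\mathcal{S}$ demands that the \emph{same} $\alpha$ work in both clauses, and the size clause breaks.
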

\begin{proof}

Consider vertex $u \in V(G_{t+1})$. 
Using Assumption \ref{ass:indhyp}, at round $t$, $\exists \alpha \in [0, \frac 1 q]$ such that $ s_{t}(u) \geq (1-\alpha) s_{t}(1 - e_{t}) $
and $d_{t}(u) \leq \frac {1-q\alpha}{1-\alpha} d_{t}(1 + e_{t})$. 
Assuming the occurrence of event $\mathcal{\tilde S}$, as defined in equation \eqref{eq:def_tilde_s}, we get
\begin{eqnarray*}
\tilde s_{t}(u) &=& s_{t}(u) e^{-1/q}(1+ O(e_t + \sqrt{\frac{\psi}{s_t}} + \frac{1} {d_{t}})) \\
	&\geq& (1-\alpha)s_t e^{-1/q}(1+ O(e_t + \sqrt{\frac{\psi}{s_t}} + \frac{1} {d_{t}})) \\
	&\geq& (1-\alpha) s_{t+1}(1+ O(e_{t} + \sqrt{\frac{\psi}{s_t}} + \frac{1} {d_{t}})).
\end{eqnarray*}
Assuming event $\mathcal{\tilde A}$ occurs,
\begin{align*}
\tilde d_{t}(u) &\leq d_{t}(u)(1-\frac {q-1} {2q^3} \frac {s_{t}}{d_{t}} e^{-1/q})e^{-1/q} 
(1+  O(e_{t} + \sqrt{\frac {\psi}{s_{t}}} + \frac{1} {d_{t}})) \\
	&\leq \frac {1-q\alpha}{1-\alpha} d_{t}(1-\frac {q-1} {2q^3} \frac {s_{t}}{d_{t}} e^{-1/q})e^{-1/q} 
(1+  O(e_{t} + \sqrt{\frac {\psi}{s_{t}}} + \frac{1} {d_{t}})) \\
	&\leq \frac {1-q\alpha}{1-\alpha} \gamma d_{t+1}.
\end{align*}
where $\gamma$ is the smallest number in $[1,\infty)$ for which the above inequality is true.
Combining the preceding inequalities, we get
$$\gamma = 1+  O(e_{t} + \sqrt{\frac{\psi}{s_{t}}} + \frac{1} {d_{t}}).$$

In the cleanup phase of our algorithm(given in Section \ref{sec:algorithm_details}), 
the change in palette is equivalent to the following process.
\begin{enumerate}
\item
Add $\frac{\alpha}{1-\alpha} \tilde s_t(u)$ arbitrary colors to 
$u$'s palette, with $d_{u,c} = q \gamma d_{t+1}$. 
This adjusts the palette size to $ \tilde s_{t}(u) \geq s_{t+1}(1+ O(e_{t} + \sqrt{\psi/s_{t}} + 1/ d_t ))$.
Lemma \ref{lem:mu2} ensures that the adjusted new average is 
$ \tilde d_{t}(u) \leq \gamma d_{t+1}$
\item
Remove all the colors with $d_{t}(u,c) \geq q \gamma d_{t+1}$. 
\end{enumerate}
Now we use Lemma \ref{lem:mu1}, setting $\mu$ to $\gamma d_{t+1}$ and $q\mu$ to $q \gamma d_{t+1}$, to get
$$s_{t+1}(u) \geq (1-\alpha) s_{t+1}(1 + O(e_{t} + \sqrt{\frac{\psi}{s_t}} + \frac{1} {d_{t}}))$$
and 
$$d_{t+1}(u) \leq \frac{1-q \alpha}{1-\alpha} d_{t+1}(1+ O(e_{t} + \sqrt{\frac{\psi}{s_t}} + \frac{1} {d_{t}})).$$
The result is obtained using Lemmas \ref{lem:sg} and \ref{lem:tilde_a} to get
$$Pr(\mathcal{\tilde S} \cap \mathcal{\tilde A}) \geq 1 - e^{-\psi}O(n^2),$$
and noting that the preceding inequalities for $s_{t+1}(u)$ and $d_{t+1}(u)$ are true for every vertex $u$ in $V(G_{t+1})$,
given events $\mathcal{\tilde S}$ and $\mathcal{\tilde A}$ occur.

\qed
\end{proof}
Now consider the event
$$\mathcal{D} := \{ \forall v \in V(G_{t+1}), \eta_{t+1}(v) \leq \eta_{t+1}
	(1 + O(e_{t} + \sqrt{\frac {\psi}{d_{t}}} + \frac 1 {d_t}))\}.$$

\begin{lemma}
\label{lem:D}
Given Assumption \ref{ass:indhyp}, we have
$$ Pr(\mathcal{D}) \geq 1 - e^{-\psi} O(n).$$
\end{lemma}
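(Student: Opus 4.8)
The plan is to write, for a fixed $v$, $\eta_{t+1}(v)$ as the number of $w\in N_t(v)$ that remain uncoloured in round $t$, prove an upper‑tail bound for this count, and then take a union bound over $v\in V(G_{t+1})$. I would \emph{not} try the naive approach of exposing the random choices of one vertex at a time and applying Theorem \ref{thm:azuma}: that is doomed here, because a single vertex $x$ at distance $2$ from $v$ (the second vertex of a $4$‑cycle through $v$) can decide whether \emph{many} neighbours of $v$ get coloured, so its martingale difference can be of order $\deg(v)$, and summing squares gives a useless error of order $\Delta\sqrt{\psi/\Delta}\cdot\sqrt\Delta$ in the complete bipartite case. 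This is exactly the $K_{\Delta,\Delta}$ phenomenon of Section \ref{sec:sketch}.

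The fix exploits triangle‑freeness twice. First, $N_t(v)$ is an independent set, and for every $w\in N_t(v)$ one has $N_t(w)\cap N_t(v)=\emptyset$ (a common neighbour of adjacent $v,w$ would be a triangle). Second, since $S_t(w)$ loses in phase II.1 precisely the colours attempted by neighbours of $w$, the event ``$w$ is coloured'' equals $\bigcup_{c\in F_w}\{w\text{ attempts }c\text{ and }c\text{ survives phase II.2 for }w\}$, where $F_w$ is the set of colours of $S_t(w)$ attempted by no neighbour of $w$. So let $\mathcal G$ be the $\sigma$‑algebra generated by the phase‑I attempts of all vertices \emph{outside} $N_t(v)$ (together, harmlessly, with all phase‑II.2 coins of vertices outside $N_t(v)$). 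Conditioned on $\mathcal G$, every $F_w$ is fixed, and ``$w$ uncoloured'' depends only on $w$'s own fresh randomness; since the $N_t(v)$‑vertices are pairwise non‑adjacent, the events $\{w\text{ uncoloured}\}_{w\in N_t(v)}$ are \emph{conditionally independent} given $\mathcal G$. Theorem \ref{thm:azuma} applied to the $|N_t(v)|$ unit‑influence trials then gives $\eta_{t+1}(v)\le M(\mathcal G)+O(\sqrt{\eta_t(v)\,\psi})$ except with probability $e^{-\psi}O(1)$, where $M(\mathcal G)=\sum_{w\in N_t(v)}Pr(w\text{ uncoloured}\mid\mathcal G)$; by Assumption \ref{ass:indhyp}, $\eta_{t+1}=\Theta(\eta_t)$, and the fact that $\eta_t\ge d_t$ in the first stage, this additive slack is $O(\eta_{t+1}\sqrt{\psi/d_t})$.

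It remains to bound $M(\mathcal G)$. Now $Pr(w\text{ uncoloured}\mid\mathcal G)=\prod_{c\in F_w}\bigl(1-\tfrac1{q^2 d_t}\,\mathrm{surv}_{w,c}\bigr)$ with each $\mathrm{surv}_{w,c}=1-O(e_t+1/d_t)$ (the survival probability computed in the proof of Lemma \ref{lem:sg}), so it is a decreasing function of $|F_w|$. The quantity $|F_w|$ is $\tilde s_t(w)$ before the phase‑II.2 removals, and is concentrated by exactly the Azuma‑over‑colours argument of Lemma \ref{lem:sg}: by the induction hypothesis $E[|F_w|]\ge s_t(w)e^{-1/q}(1+O(e_t+1/d_t))$ (using $d_t(w,c)\le q d_t(1+e_t)$) and $|F_w|$ is within $\sqrt{\psi s_t(w)}$ of its mean with probability $1-e^{-\psi}O(1)$. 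Let $\mathcal E_F$ be the intersection over all $w\in V(G_t)$ of these events; then $\mathcal E_F$ is $\mathcal G$‑measurable, independent of $v$, and $Pr(\mathcal E_F)\ge1-e^{-\psi}O(n)$. On $\mathcal E_F$, substituting the lower bound $|F_w|\ge\tfrac{q-1}{q}s_t(1-e_t)e^{-1/q}(1-O(e_t+1/d_t+\sqrt{\psi/s_t}))$ into the product, and using $s_t\le q^2 d_t$ in the first stage (so $Pr(w\text{ uncoloured}\mid\mathcal G)=1-\Theta(s_t/d_t)$ and the relative $\sqrt{\psi/s_t}$‑error in $|F_w|$ becomes a relative $O(\tfrac{s_t}{d_t}\sqrt{\psi/s_t})=O(\sqrt{\psi/d_t})$‑error, since $\sqrt{s_t/d_t}\le q$) together with $q\ge2$ to recover the factor $\tfrac{q-1}{2q^3}$, yields $Pr(w\text{ uncoloured}\mid\mathcal G)\le\bigl(1-\tfrac{q-1}{2q^3}\tfrac{s_t}{d_t}e^{-1/q}\bigr)\bigl(1+O(e_t+\sqrt{\psi/d_t}+1/d_t)\bigr)$, i.e.\ the conditional analogue of Lemma \ref{lem:u_cl}. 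Summing over $w\in N_t(v)$, applying $\eta_t(v)\le\eta_t(1+e_t)$ and Lemma \ref{lem:O}, gives $M(\mathcal G)\le\eta_{t+1}\bigl(1+O(e_t+\sqrt{\psi/d_t}+1/d_t)\bigr)$ on $\mathcal E_F$.

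To assemble: $\mathcal D$ fails only if $\mathcal E_F$ fails (probability $e^{-\psi}O(n)$) or, on $\mathcal E_F$, some $v\in V(G_{t+1})$ has $\eta_{t+1}(v)>M(\mathcal G)+O(\sqrt{\eta_t(v)\psi})$; since $\mathcal E_F\in\mathcal G$, the conditional Azuma bound gives probability $\le e^{-\psi}$ for each such $v$, and a union over the at most $n$ vertices of $V(G_{t+1})$ gives total failure $e^{-\psi}O(n)$, as claimed. \textbf{The main obstacle} is precisely the step just described: one must discard the single‑vertex martingale (which the $4$‑cycle structure wrecks) in favour of exposing \emph{all} the ``outside'' randomness first, making the per‑neighbour colouring events conditionally independent, and then separately control the fluctuation of the conditional mean through the palette‑size concentration of Lemma \ref{lem:sg}. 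The secondary bookkeeping point — that the error emerges as $\sqrt{\psi/d_t}$ rather than the larger $\sqrt{\psi/s_t}$ — hinges on the colouring probability being $\Theta(s_t/d_t)$ and on $s_t\le q^2 d_t$ throughout the first stage.
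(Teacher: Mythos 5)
Your overall strategy — expose the randomness of all vertices outside $N_t(v)$ first (the $\sigma$-algebra $\mathcal G$), use triangle-freeness to make the events $\{w\text{ uncoloured}\}_{w\in N_t(v)}$ conditionally independent given $\mathcal G$, apply Azuma with unit influences, and separately control the conditional mean $M(\mathcal G)$ via the concentration of the free-colour count — is sound, and in fact more explicit than the paper's argument, which applies Theorem \ref{thm:azuma} directly to the indicator trials $T_i=\{v_i\text{ coloured}\}$ with a one-line assertion that each has influence $O(s_t/d_t)$. However, there is a genuine gap in your bound on $M(\mathcal G)$: the claim that each $\mathrm{surv}_{w,c}=1-O(e_t+1/d_t)$ is false. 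From Phase II.2, the keep-probability of colour $c$ at $w$ is $\min\bigl(1,\, e^{-1/q}/Pr(\mathcal F_t(w,c))\bigr)$, and $Pr(\mathcal F_t(w,c))=(1-\tfrac1{q^2 d_t})^{d_t(w,c)}$. The induction hypothesis supplies only an \emph{upper} bound $d_t(w,c)\le q d_t(1+e_t)$, so $Pr(\mathcal F_t(w,c))$ ranges over $[\,e^{-1/q}(1-O(e_t+1/d_t)),\,1\,]$ and hence $\mathrm{surv}_{w,c}$ ranges over essentially $[e^{-1/q},1]$: it equals $e^{-1/q}$ when $D_t(w,c)=\emptyset$, and is bounded away from $1$ for any $c$ with $d_t(w,c)$ appreciably below $q d_t$ — which is the typical case, since the \emph{average} $d_t(w,c)$ is only $d_t(1+e_t)$. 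Plugging the uniform worst-case bound $\mathrm{surv}_{w,c}\ge e^{-1/q}$ into your product yields $Pr(w\text{ coloured}\mid\mathcal G)\gtrsim\tfrac{q-1}{2q^3}\tfrac{s_t}{d_t}e^{-2/q}$, short of the required $\tfrac{q-1}{2q^3}\tfrac{s_t}{d_t}e^{-1/q}$ by a constant factor $e^{-1/q}$; that is a $\Theta(1)$ error, not the $O(e_t+\sqrt{\psi/d_t}+1/d_t)$ needed to close the induction.

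The repair is small and stays entirely within your framework: do not decouple $|F_w|$ from the weights. Work instead with the $\mathcal G$-measurable variable $Z_w:=\sum_{c\in F_w}\mathrm{surv}_{w,c}$, so that $Pr(w\text{ uncoloured}\mid\mathcal G)\le\exp\bigl(-Z_w/(q^2 d_t)\bigr)$. Its mean is exactly what you wanted for $|F_w|$, namely
$$E[Z_w]=\sum_{c\in S_t(w)}Pr(\mathcal F_t(w,c))\cdot\mathrm{surv}_{w,c}=\sum_{c\in S_t(w)}\min\bigl(Pr(\mathcal F_t(w,c)),e^{-1/q}\bigr)=s_t(w)\,e^{-1/q}\bigl(1+O(e_t+1/d_t)\bigr),$$
and the colour-by-colour Azuma argument of Lemma \ref{lem:sg} applies to $Z_w$ verbatim because each $\mathrm{surv}_{w,c}\le1$, so revealing the trial for one colour moves $Z_w$ by at most $1$; thus $Z_w\ge E[Z_w]-\sqrt{\psi\,s_t(w)}$ off an event of probability $e^{-\psi}O(1)$. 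With that substitution your assembly of $M(\mathcal G)$, the conditional Azuma step, and the union bound all go through unchanged and recover the stated bound.
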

\begin{proof}
Suppose $u$ is an uncolored vertex at the beginning of round $t$.
By Lemma \ref{lem:u_cl} we have
\begin{eqnarray*}
Pr\{\text{$u$ is colored}\} &\geq \frac {q-1} {2q^3} \frac {s_{t}} {d_{t}} e^{-1/q}
							(1+O(e_{t} + \frac 1 {d_t})).
\end{eqnarray*}
Using linearity of expectation, $\forall u$ in $V(G_{t+1})$
\begin{align*}
E[\eta_{t+1}(u)] 	&\leq \eta_{t}(u)(1 - \frac {q-1}{2q^3} \frac {s_{t}}{d_{t}} e^{-1/q})
					(1 + O(e_{t}+ \frac 1 {d_t})) & \langle \mbox{Lemma \ref{lem:O}}\rangle \\
			&\leq \eta_{t+1}(1 + O(e_{t} + \frac 1 {d_t})).
\end{align*}

We again resort to Theorem \ref{thm:azuma} to study concentration of $\eta_{t+1}(u)$.
Suppose $\eta_t(u) = m$.
Let $v_1, \dots, v_m$ be the vertices in $N_t(u)$.
Then $\eta_{t+1}(u)$ may be considered a random variable determined by
$T_1, \dots, T_m$, where $T_i$ is a random variable which indicates that $v_i$
is colored in round $t$ or not.
The affect of each $T_i$ given $T_1, \dots, T_{i-1}$ is at most $O(\frac {s_t}{d_t})$.
Thus $\sum \alpha_i^2$ in the statement of Theorem \ref{thm:azuma}
is
$$O(\frac {\eta_t s_t^2}{d_t^2}) = O(\frac{\eta_t s_t}{d_t}).$$
Using Theorem \ref{thm:azuma}, we get
$$ Pr\{\eta_{t+1}(u) \geq \eta_{t+1}(1 + O(e_{t} + \frac 1 {d_t})) 
	+ O(\sqrt{\psi \frac{s_{t} \eta_{t}}{d_{t}}})\} \leq e^{-\psi}.$$
Thus
$$ Pr\{\eta_{t+1}(u) \geq \eta_{t+1}(1 
	+ O(e_{t} + \sqrt{\psi \frac{s_{t}}{d_{t} \eta_{t}}} + \frac 1 {d_t}))\} \leq e^{-\psi}.$$
Since the above is true for every vertex $u$ in $V(G_{t+1})$,
the theorem is proved by applying the union bound on probabilties.
\qed
\end{proof}

\subsection{Expected Values and Concentration Inequalities for the Second Stage}

We now focus on the second stage of the algorithm, where $s_{t} \geq q^2 d_{t}$.
The pattern of analysis for the second stage is similar to that of the first stage.
But simply reproducing the previous section, with changes here and there,
would make it difficult to understand the differences.
With this in mind, we proceed much faster now and focus on the
expectations of variables, leaving out all concentration calculations.
These can be filled in by using the corresponding proofs in 
Section \ref{sec:first_stage} as templates.

Consider now the event
$$ \mathcal{\tilde S} := \{\forall u \in V(G_{t+1}), s_{t}(u) e^{-\frac 1 {q} \frac {d_{t}}{s_{t}}}
		(1 - O(e_t + \sqrt{\frac{\psi}{s_{t}}} + \frac{1}{s_{t}})) 
	\leq \tilde s_{t}(u) \leq s_{t}(u) e^{-\frac 1 {q} \frac {d_{t}}{s_{t}}}
		(1 + O(e_t + \sqrt{\frac{\psi}{s_{t}}} + \frac{1}{s_{t}}))\}.$$

\begin{lemma}
\label{lem:2sg}
Given Assumption \ref{ass:indhyp}, we have
$$ Pr(\mathcal{\tilde S}) \geq 1- e^{-\psi}O(n).$$
\end{lemma}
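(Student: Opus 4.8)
The plan is to mirror the proof of Lemma~\ref{lem:sg} essentially verbatim, replacing the exponent $1/q$ by $\frac{1}{q}\frac{d_t}{s_t}$ and the assignment probability $\frac{1}{q^2}\frac{1}{d_t}$ by $\frac{1}{q^2}\frac{1}{s_t}$ throughout, and checking that the error terms come out as stated (note that $\frac{1}{d_t}$ is replaced by $\frac{1}{s_t}$ in the second stage because the assignment probability now has $s_t$ in the denominator). First I would fix an uncolored vertex $u$ at the start of round $t$ and a color $c$ in $S_t(u)$, and compute the probability that $c$ survives phases II.1 and II.2. For phase II.1,
\begin{align*}
Pr\{c \text{ removed in II.1}\} &= 1 - \prod_{v \in D_t(u,c)}\left(1 - \tfrac{1}{q^2}\tfrac{1}{s_t}\right)
= 1 - \left(1 - \tfrac{1}{q^2}\tfrac{1}{s_t}\right)^{d_t(u,c)}.
\end{align*}
Using Assumption~\ref{ass:indhyp}, $d_t(u,c) \leq q d_t(1+e_t)$, together with $\log(1+x) = x + O(x^2)$ and $d_t/s_t < 1/q^2$ (we are in the second stage), this becomes $1 - e^{-\frac{1}{q}\frac{d_t}{s_t}}(1 + O(e_t + \frac{1}{s_t}))$, where the $O(1/s_t)$ absorbs the quadratic error $O((1/(q^2 s_t))^2)\cdot q d_t = O(d_t/s_t^2) = O(1/s_t)$ since $d_t/s_t < 1$. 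Then phase II.2 multiplies the survival probability by $\min(1, Desired\_\mathcal{F}_t(u,c)/Pr(\mathcal{F}_t(u,c)))$; since $Desired\_\mathcal{F}_t = e^{-\frac{1}{q}\frac{d_t}{s_t}}$ and $Pr(\mathcal{F}_t(u,c)) = e^{-\frac{1}{q}\frac{d_t}{s_t}}(1+O(e_t+\frac{1}{s_t}))$, this forces $Pr\{c \in \tilde S_t(u)\} = e^{-\frac{1}{q}\frac{d_t}{s_t}}(1 + O(e_t + \frac{1}{s_t}))$, exactly the analogue of equation~\eqref{eq:u_dr_c_naturally}. Linearity of expectation then gives $E[\tilde s_t(u)] = s_t(u)e^{-\frac{1}{q}\frac{d_t}{s_t}}(1+O(e_t+\frac{1}{s_t}))$.

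For concentration, I would repeat the Azuma argument from Lemma~\ref{lem:sg} without change: writing $s_t(u) = m$ and letting $T_i$ be the set of vertices of $G_t$ assigned color $c_i$ in round $t$, the random variable $\tilde s_t(u)$ is determined by $T_1, \dots, T_m$ and each $T_i$ changes it by at most $1$, so $\sum \alpha_i^2 \leq m = s_t(u) \leq \Delta/k = O(s_t)$. Theorem~\ref{thm:azuma} with deviation $\sqrt{\psi s_t(u)}$ yields
$$Pr\{|\tilde s_t(u) - E[\tilde s_t(u)]| \geq \sqrt{\psi s_t(u)}\} \leq e^{-\psi}O(1).$$
Dividing the deviation by $E[\tilde s_t(u)] = \Theta(s_t(u)e^{-\frac{1}{q}\frac{d_t}{s_t}}) = \Theta(s_t(u))$ (bounded since $d_t/s_t < 1$) turns the additive error $\sqrt{\psi s_t(u)}$ into the multiplicative error $O(\sqrt{\psi/s_t(u)}) = O(\sqrt{\psi/s_t})$ after using $s_t(u) = \Theta(s_t)$ from Assumption~\ref{ass:indhyp} and $e_t \ll 1$. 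Combining the expectation estimate with the concentration bound gives, for a single $u$,
$$ s_t(u) e^{-\frac{1}{q}\frac{d_t}{s_t}}\Bigl(1 - O(e_t + \sqrt{\tfrac{\psi}{s_t}} + \tfrac{1}{s_t})\Bigr) \leq \tilde s_t(u) \leq s_t(u) e^{-\frac{1}{q}\frac{d_t}{s_t}}\Bigl(1 + O(e_t + \sqrt{\tfrac{\psi}{s_t}} + \tfrac{1}{s_t})\Bigr)$$
with probability $1 - e^{-\psi}O(1)$. Finally a union bound over all $u \in V(G_{t+1})$ (at most $n$ vertices) gives $Pr(\mathcal{\tilde S}) \geq 1 - e^{-\psi}O(n)$, as claimed.

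I do not expect any serious obstacle here: the lemma is deliberately the ``second-stage twin'' of Lemma~\ref{lem:sg}, and the paper itself announces (at the start of Section~5.2) that such proofs should be filled in using the first-stage arguments as templates. The only points requiring a little care are (i) confirming that every occurrence of $1/d_t$ in the first-stage error term should become $1/s_t$, which follows because the assignment probability's denominator changed from $d_t$ to $s_t$, and the quadratic-error contributions scale accordingly; and (ii) making sure that the bound $d_t/s_t < 1/q^2 < 1$ valid in the second stage is what keeps $e^{-\frac{1}{q}\frac{d_t}{s_t}}$ bounded away from $0$, so that additive-to-multiplicative error conversions are legitimate. Both are immediate from the definition of the second stage.
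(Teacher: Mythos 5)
Your proposal is correct and follows essentially the same route as the paper's own proof: compute the Phase II.1 removal probability using $d_t(u,c)\leq q d_t(1+e_t)$ from Assumption~\ref{ass:indhyp}, observe that Phase II.2 pins the survival probability to $e^{-\frac{1}{q}\frac{d_t}{s_t}}(1+O(e_t+1/s_t))$, take expectations, then run the same Azuma argument as in Lemma~\ref{lem:sg}; the paper's proof states this identically (including the replacement of $1/d_t$ by $1/s_t$ in the error term) and then explicitly defers the concentration step to the template of Lemma~\ref{lem:sg}. The only cosmetic slip is the aside ``$s_t(u)\leq\Delta/k=O(s_t)$'' (the right-hand quantity is $s_0$, not $s_t$), but it is never used: the Azuma step only needs $\sum\alpha_i^2\leq s_t(u)$ and the lower bound $s_t(u)=\Omega(s_t)$ from Assumption~\ref{ass:indhyp} to convert $\sqrt{\psi s_t(u)}$ into the relative error $O(\sqrt{\psi/s_t})$, exactly as you do.
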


\begin{proof}
Suppose $u$ is an uncolored vertex at the beginning of round $t$, and $c$ a color in its palette.
\begin{eqnarray*}
Pr\{\text{$c$ is removed from $S_t(u)$ in phase II.1}\} 
	&=& 1 - \prod_{v \in D_{t}(u,c)}(1 - Pr\{ \text{$v$ is assigned $c$} \}) \\
	&=& 1 - \prod_{v \in D_{u,c}}(1 - \frac 1 {q^2} \frac 1 {s_{t}} (1+O(e_{t} + \frac{1}{s_{t}}))) \\
	&\leq& 1 - e^{- \frac 1 {q} \frac {d_{t}}{s_{t}}}(1 + O(e_{t} + \frac {1}{s_{t}}))
\end{eqnarray*}
In phase II.2 we remove colors from the palette using an appropriate bernoulli variable, to get
$$Pr\{c \notin \tilde S_t(u)\} = 
	1 - e^{-\frac 1 {q} \frac {d_{t}}{s_{t}}}(1 + O(e_{t} + \frac {1}{s_{t}})).$$
Using linearity of expectation
$$ \forall u \in V(G_{t+1}), E[\tilde s_{t}(u)] = s_{t}(u) e^{-\frac 1 {q} \frac {d_{t}}{s_{t}}}
									(1 + O(e_{t} + \frac {1}{s_{t}})).$$

The rest of the proof follows that of Lemma \ref{lem:sg}.
\qed
\end{proof}
We now focus on the sets $D_t(u,c)$. The following two lemmas will help.
\begin{lemma}
\label{lem:u_cl_c_2}
Let $u$ be an uncolored vertex at the beginning of round $t$, and $c$ a color in its palette.
Then given Assumption \ref{ass:indhyp}, we have
$$Pr\{\text{$u$ is assigned $c$ and $c \in \tilde S_t(u)$}\} 
	\geq \frac 1 {q^2} \frac 1 {s_{t}} e^{- \frac 1 q \frac {d_t}{s_t}}
				(1+O(e_{t} + \frac {1}{s_{t}})).$$
\end{lemma}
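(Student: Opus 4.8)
The plan is to mirror the proof of Lemma~\ref{lem:u_cl_c} from the first stage, adjusting only the probability with which colors are assigned. Recall that in the second stage, for each uncolored vertex $u$ and each color $c \in S_t(u)$, we assign $c$ to $u$ with probability $\frac{1}{q^2}\frac{1}{s_t}$, and that $Desired\_\mathcal{F}_t = e^{-\frac{1}{q}\frac{d_t}{s_t}}$. The two events ``$u$ is assigned $c$'' and ``$c \in \tilde S_t(u)$'' are independent: the former depends on the coin flipped for the pair $(u,c)$, while the latter depends on the coins flipped for pairs $(v,c)$ with $v \in D_t(u,c)$ (phase~II.1) together with the Bernoulli thinning in phase~II.2, and these involve disjoint sets of coins since $u \notin D_t(u,c)$. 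Therefore
$$Pr\{\text{$u$ is assigned $c$ and $c \in \tilde S_t(u)$}\} = Pr\{\text{$u$ is assigned $c$}\}\cdot Pr\{c \in \tilde S_t(u)\}.$$

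First I would substitute $Pr\{\text{$u$ is assigned $c$}\} = \frac{1}{q^2}\frac{1}{s_t}$. Then I would invoke Lemma~\ref{lem:2sg}, whose proof already establishes that $Pr\{c \in \tilde S_t(u)\} = e^{-\frac{1}{q}\frac{d_t}{s_t}}(1 + O(e_t + \frac{1}{s_t}))$ (this is the displayed identity $Pr\{c \notin \tilde S_t(u)\} = 1 - e^{-\frac{1}{q}\frac{d_t}{s_t}}(1 + O(e_t + \frac{1}{s_t}))$ in that proof, rearranged). Multiplying the two factors gives exactly
$$\frac{1}{q^2}\frac{1}{s_t} e^{-\frac{1}{q}\frac{d_t}{s_t}}(1 + O(e_t + \tfrac{1}{s_t})),$$
and since the lemma only claims a lower bound ($\geq$), the $(1 + O(\cdot))$ factor poses no difficulty once we note the error term is $1 - o(1)$ under Assumption~\ref{ass:indhyp} (where $s_t \gg \psi$ and $e_t \ll 1$).

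There is essentially no obstacle here; the only subtlety worth a sentence is justifying the independence claim carefully, since in phase~II.2 the thinning probability $1 - \min(1, Desired\_\mathcal{F}_t(u,c)/Pr(\mathcal{F}_t(u,c)))$ nominally depends on the algorithm's state, but $Pr(\mathcal{F}_t(u,c))$ is computed from the data structures at the \emph{beginning} of the round and is hence deterministic given the induction hypothesis, so the phase~II.2 coin for $(u,c)$ is still independent of the phase~I coin for $(u,c)$. Thus the proof is a two-line calculation referencing Lemma~\ref{lem:2sg}, exactly analogous to the first-stage argument.
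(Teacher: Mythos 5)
Your proposal is correct and follows essentially the same route as the paper: factor the joint probability using the independence of the phase-I coin for $(u,c)$ from the coins governing whether $c$ survives phases II.1 and II.2, substitute the phase-I probability $\frac{1}{q^2}\frac{1}{s_t}$, and bound the survival probability of $c$. The only cosmetic difference is that you cite the identity $Pr\{c \in \tilde S_t(u)\} = e^{-\frac{1}{q}\frac{d_t}{s_t}}(1+O(e_t + \frac{1}{s_t}))$ established in the proof of Lemma~\ref{lem:2sg} (which correctly folds in the phase~II.2 thinning), whereas the paper writes out the product $\prod_{v \in D_t(u,c)}(1-\frac{1}{q^2}\frac{1}{s_t})$ directly and then lower-bounds it, so if anything your version is slightly more explicit about the role of phase~II.2.
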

\begin{proof}
\begin{eqnarray*}
	Pr\{\text{$u$ is assigned $c$ and $c \in \tilde S_t(u)$}\}
		&=& Pr\{ \text{$u$ is assigned $c$}\} Pr\{ c \in \tilde S_t(u)\} \\
		&=& \frac 1{q^2} \frac{1}{s_{t}} \prod_{v \in D_{t}(u,c)} (1 - \frac 1{q^2}\frac{1}{s_{t}}) \\
		&\geq& \frac 1 {q^2} \frac 1 {s_{t}} e^{-\frac 1 {q} \frac {d_{t}}{s_{t}}}
									(1+O(e_{t} + \frac{1}{s_{t}}))
\end{eqnarray*}
\qed
\end{proof}
The following lemma is a consequence of the previous one.
\begin{lemma}
\label{lem:u_cl_2}
Let $u$ be an uncolored vertex at the beginning of round $t$. 
Then given Assumption \ref{ass:indhyp}, we have
$$Pr\{u\text{ is colored}\} 
	\geq \frac {q-1}{2 q^3} e^{-\frac 1 q \frac {d_t}{s_t}}(1+O(e_{t} + \frac {1}{s_{t}})).$$
\end{lemma}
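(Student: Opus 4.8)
The plan is to follow the proof of Lemma \ref{lem:u_cl} verbatim, with the first-stage assignment probability $\frac{1}{q^{2}}\frac{1}{d_{t}}$ replaced everywhere by the second-stage probability $\frac{1}{q^{2}}\frac{1}{s_{t}}$ and with Lemma \ref{lem:u_cl_c_2} used in place of Lemma \ref{lem:u_cl_c}. First I would write
$$\{u\text{ is colored}\}=\bigcup_{c\in S_{t}(u)}\{u\text{ is assigned }c\text{ and }c\in\tilde S_{t}(u)\},$$
and note that the events on the right are mutually independent: each is a function of the Phase~I coin for the pair $(u,c)$, the Phase~I coins for the pairs $(v,c)$ with $v$ a neighbour of $u$, and the Phase~II.2 deletion coin for $(u,c)$, and for distinct colours $c$ these coin sets are disjoint. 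Therefore
$$Pr\{u\text{ is colored}\}=1-\prod_{c\in S_{t}(u)}\bigl(1-Pr\{u\text{ is assigned }c\text{ and }c\in\tilde S_{t}(u)\}\bigr).$$

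Next I would insert the lower bound of Lemma \ref{lem:u_cl_c_2} into each factor, and the induction hypothesis (Assumption \ref{ass:indhyp}), which gives $s_{t}(u)\ge(1-\alpha)s_{t}(1-e_{t})\ge\frac{q-1}{q}s_{t}(1-e_{t})$ since $\alpha\le 1/q$, to control the number of factors. Using $1-x\le e^{-x}$ and absorbing the factor $(1-e_{t})$ into the error term, this yields
$$Pr\{u\text{ is colored}\}\ \ge\ 1-\exp\!\Bigl(-\tfrac{q-1}{q^{3}}\,e^{-\frac{1}{q}\frac{d_{t}}{s_{t}}}\bigl(1+O(e_{t}+\tfrac{1}{s_{t}})\bigr)\Bigr).$$
Because $q\ge 2$ and $e^{-\frac{1}{q}\frac{d_{t}}{s_{t}}}\le 1$, the argument of the exponential has absolute value at most $1$, so the elementary inequality $1-e^{-x}\ge x-\tfrac{x^{2}}{2}\ge x/2$ for $0\le x\le 1$ applies and gives
$$Pr\{u\text{ is colored}\}\ \ge\ \tfrac{q-1}{2q^{3}}\,e^{-\frac{1}{q}\frac{d_{t}}{s_{t}}}\bigl(1+O(e_{t}+\tfrac{1}{s_{t}})\bigr),$$
which is the assertion; the loss of the factor $2$ is exactly what the ``since $q\ge2$'' step in the proof of Lemma \ref{lem:u_cl} buys.

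I do not expect a genuine obstacle; the only two points deserving a sentence of care are the independence claim that legitimises the product formula, and the verification that the relative error $1+O(e_{t}+1/s_{t})$ is unharmed by being raised to a power $\Theta(s_{t})$ and then passed through $1-e^{-x}\ge x/2$. The latter is the same bookkeeping as in Section \ref{sec:first_stage}: since $e_{t}\ll1$ and $s_{t}\gg\psi\gg1$ under Assumption \ref{ass:indhyp}, the exponent $\tfrac{q-1}{q^{3}}e^{-\frac1q\frac{d_t}{s_t}}(1+O(e_{t}+1/s_{t}))$ differs from $\tfrac{q-1}{q^{3}}e^{-\frac1q\frac{d_t}{s_t}}$ by a $(1+o(1))$ factor, and Lemma \ref{lem:O} can be cited if one wants the manipulation $1-A(1+e)=(1-A)(1+O(e))$ made fully explicit.
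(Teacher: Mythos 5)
Your proposal is correct and matches the paper's proof, which simply says ``Following the proof of Lemma \ref{lem:u_cl}, but using Lemma \ref{lem:u_cl_c_2} instead of Lemma \ref{lem:u_cl_c}'' and records the same two displayed lines. The only differences are cosmetic: you spell out the independence justification and the $1-e^{-x}\ge x/2$ step explicitly, which the paper leaves implicit, and you correctly write $e^{-\frac1q\frac{d_t}{s_t}}$ inside the product where the paper's display has an apparent sign typo.
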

\begin{proof}
Following the proof of Lemma \ref{lem:u_cl}, but using Lemma \ref{lem:u_cl_c_2} instead of Lemma \ref{lem:u_cl_c}, we get
\begin{align*}
Pr \{u \mbox{ is colored}\} 
	&\geq 1 - (1 - \frac 1 {q^2} \frac 1 {s_t} e^{\frac 1 q \frac {d_t}{s_t}}
		(1 + O(e_t + \frac 1 {s_t})))^{s_t(u)} \\
	&\geq  \frac {q-1} {2q^3} e^{-\frac 1 q \frac {d_t} {s_t}}(1 + O(e_t + \frac 1 {s_t})).
\end{align*}
\qed
\end{proof}
Consider the event
$$\mathcal{\tilde A} := \{\forall u \in V(G_{t+1}), \tilde d_{t}(u) 
\leq d_{t}(u) (1 - \frac {q-1}{2q^3} e^{-\frac 1 {q} \frac {d_{t}}{s_{t}}})
	e^{-\frac 1 {q} \frac {d_{t}}{s_{t}}}
	(1 + O(e_{t} + \sqrt{\frac{\psi} {s_{t}}} + \frac{1}{s_{t}} + \sqrt{\frac{\psi d_{t}}{s_{t} d_{t}(u)}}))\}.$$

\begin{lemma}
Given Assumption \ref{ass:indhyp}, we have
$$Pr(\mathcal{\tilde A}) \geq 1 - e^{-\psi}O(n^2).$$
\end{lemma}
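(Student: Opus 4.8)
The plan is to run the proof of Lemma~\ref{lem:tilde_a} almost verbatim, changing only the quantities that differ in the second stage: the per-color assignment probability is $\frac1{q^2}\frac1{s_t}$ rather than $\frac1{q^2}\frac1{d_t}$; a color survives Phase~II with probability $e^{-\frac1q\frac{d_t}{s_t}}$ rather than $e^{-1/q}$ (Lemma~\ref{lem:2sg}); the probability a vertex gets colored is governed by Lemmas~\ref{lem:u_cl_c_2} and~\ref{lem:u_cl_2} in place of Lemmas~\ref{lem:u_cl_c} and~\ref{lem:u_cl}; and every error factor $\frac1{d_t}$ becomes $\frac1{s_t}$.

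First I would fix an uncolored vertex $u$ and a color $c\in S_t(u)$. For $v\in D_t(u,c)$, Lemma~\ref{lem:u_cl_c_2} gives $Pr\{v\text{ is colored with }d\}=O(1/s_t)$ for each $d\in S_t(v)$, which is the only quantitative input the factorization arguments of Lemma~\ref{lem:tilde_a} require; summing over the color used to color $v$ yields
$$Pr(\{c\notin\tilde S_t(v)\}\cap\{v\text{ is colored}\}) = Pr\{c\notin\tilde S_t(v)\}\,Pr\{v\text{ is colored}\}\,(1+O(1/s_t)),$$
and the analogous identity with ``not colored''. Triangle-freeness of $G$ (no common neighbour of $u,v$) lets me drop the conditioning on $c\in\tilde S_t(u)$ at a cost of $1+O(1/s_t)$. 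Feeding these into the decomposition $\{v\notin\tilde D_t(u,c)\}=\{v\text{ is colored}\}\cup(\{c\notin\tilde S_t(v)\}\cap\{v\text{ is not colored}\})$, together with the second-stage value of $Pr\{c\notin\tilde S_t(v)\}$ from Lemma~\ref{lem:2sg} and the lower bound of Lemma~\ref{lem:u_cl_2}, gives
$$Pr\{v\notin\tilde D_t(u,c)\mid c\in\tilde S_t(u)\}\ \geq\ \Bigl(1-\bigl(1-\tfrac{q-1}{2q^3}e^{-\frac1q\frac{d_t}{s_t}}\bigr)e^{-\frac1q\frac{d_t}{s_t}}\Bigr)\bigl(1+O(e_t+\tfrac1{s_t})\bigr).$$
Linearity of expectation then bounds $E[\tilde d_t(u,c)\mid c\in\tilde S_t(u)]$, and summing over $c$ with weights $Pr\{c\in\tilde S_t(u)\}=e^{-\frac1q\frac{d_t}{s_t}}(1+O(e_t+\tfrac1{s_t}))$ produces the stated estimate for $E[\bar d_t(u)]$.

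For concentration I would transcribe the Azuma step of Lemma~\ref{lem:tilde_a} unchanged: $\bar d_t(u)$ is a function of the $s_t(u)$ trials $T_i$ recording which vertices get color $c_i$, each $T_i$ moves $\bar d_t(u)$ by at most $d_t(u,c_i)$, so $\sum\alpha_i^2\leq\sum_c d_t^2(u,c)\leq O(s_t(u)\,d_t\,d_t(u))$ (extremal when each $d_t(u,c)$ is $qd_t$ or $0$). Theorem~\ref{thm:azuma} then fixes $\bar d_t(u)$ to its mean up to an additive $O(\sqrt{\psi\,s_t(u)\,d_t\,d_t(u)})$ with failure probability $e^{-\psi}O(1)$; dividing by $\tilde s_t(u)$ and using Lemma~\ref{lem:2sg} (valid with probability $1-e^{-\psi}O(n)$) to replace $\tilde s_t(u)$ by $s_t(u)e^{-\frac1q\frac{d_t}{s_t}}(1+O(e_t+\sqrt{\psi/s_t}+1/s_t))$ converts the additive slack into the term $\sqrt{\psi d_t/(s_t d_t(u))}$ and assembles exactly the inequality defining $\mathcal{\tilde A}$. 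A union bound over the at most $n$ vertices of $V(G_{t+1})$ then gives $Pr(\mathcal{\tilde A})\geq 1-e^{-\psi}O(n^2)$.

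I expect the main obstacle to be the error bookkeeping rather than any new idea: one must check that the triangle-free ``no common neighbour'' conditioning costs only a $1+O(1/s_t)$ factor uniformly in the second-stage regime $s_t\geq q^2 d_t$, and that after division by $\tilde s_t(u)$ the Azuma slack collapses to precisely the $\sqrt{\psi d_t/(s_t d_t(u))}$ error appearing in $\mathcal{\tilde A}$ and not something larger. With those factors verified, the remainder is a mechanical copy of the first-stage proof.
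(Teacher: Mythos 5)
Your proposal matches the paper's own proof: the paper also transcribes the argument of Lemma~\ref{lem:tilde_a} with the second-stage substitutions (assignment probability $\frac1{q^2 s_t}$, survival probability $e^{-\frac1q\frac{d_t}{s_t}}$, Lemmas~\ref{lem:2sg}, \ref{lem:u_cl_c_2}, \ref{lem:u_cl_2} replacing their first-stage counterparts, and $1/d_t\to 1/s_t$ in the error terms), computes the bound on $Pr\{v\notin\tilde D_t(u,c)\mid c\in\tilde S_t(u)\}$ and hence on $E[\bar d_t(u)]$, and then defers the Azuma concentration and union bound to the first-stage proof, exactly as you spell out. The only nitpick is that Lemma~\ref{lem:u_cl_c_2} as stated is a lower bound, so the $O(1/s_t)$ upper bound on $Pr\{v\text{ is colored with }d\}$ really comes from the trivial bound $Pr\{v\text{ is assigned }d\}=\frac1{q^2 s_t}$ rather than from that lemma; but this is immaterial to the argument.
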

\begin{proof}
Let $u$ be an uncolored vertex at the beginning of round $t$,
and let $c$ be a color in its palette.
For a vertex $v$ in $D_{t}(u,c)$, the event
\begin{eqnarray*}
\{v \notin \tilde D_t(u,c) \} = \{\text{$v$ is colored}\} \cup (\{ \text{$v$ is not colored}\} \cap 
\{c \notin \tilde S_t(v)\}).
\end{eqnarray*}
Then as in the proof of Lemma \ref{lem:tilde_a}, we get
\begin{eqnarray*}
Pr\{v \notin \tilde D_t(u,c) | c \in \tilde S_t(u)\} &=& Pr\{v \notin \tilde D_t(u,c)\}(1+O(\frac 1 {s_t})) \\
	&=& (Pr\{\text{$v$ is colored}\} + (1 - e^{-\frac 1 {q} \frac {d_{t}}{s_{t}}})
			(1 - Pr\{\text{$v$ is colored}\}))(1 + O(\frac {1}{s_{t}})) \\
	&=& (1 - (1 - Pr\{\text{$v$ is colored}\}) e^{-\frac 1 {q} \frac {d_{t}}{s_{t}}})
		(1+ O(e_{t} + \frac {1}{s_{t}})) \\
	&\geq& (1 - (1 - \frac {q-1} {2q^3} e^{-\frac 1 {q} \frac {d_{t}}{s_{t}}}) e^{-\frac 1 {q} \frac {d_{t}}{s_{t}}})(1+ O(e_{t} + \frac {1}{s_{t}})).
\end{eqnarray*}
By linearity of expectation, $\forall u \in V(G_{t+1}), \forall c \in \tilde s_{t}(u)$
\begin{equation*}
E[\tilde d_{t}(u,c) | c \in \tilde s_{t}(u)] 
\leq E[\tilde d_{t}(u,c)](1+O(\frac 1{s_t})) 
\leq d_{t}(u,c)(1 - \frac {q-1} {2q^3} e^{-\frac 1 {q} \frac {d_{t}}{s_{t}}}) e^{-\frac 1 {q} \frac {d_{t}}{s_{t}}}(1+ O(e_{t} + \frac {1}{s_{t}})).
\end{equation*}
Now using the above bound
\begin{eqnarray*}
E[\bar d_{t}(u)] &=& \sum_{c \in s_{t}(u)} Pr\{c \in \tilde s_{t}(u)\} E[\tilde d_{t}(u,c) | c \in \tilde s_{t}(u)] \\
	&\leq& e^{-\frac 1 {q} \frac {d_{t}}{s_{t}}} \sum_{c \in s_{t}(u)} d_{t}(u,c)(1 -\frac {q-1} {2q^3} e^{-\frac 1 {q} \frac {d_{t}}{s_{t}}}) e^{-\frac 1 {q} \frac {d_{t}}{s_{t}}}(1+ O(e_{t} + \frac {1}{s_{t}})) \\
	&\leq& e^{-\frac 1 {q} \frac {d_{t}}{s_{t}}} s_{t}(u) d_{t}(u) (1 - \frac {q-1} {2q^3} e^{-\frac 1 {q} \frac {d_{t}}{s_{t}}}) e^{-\frac 1 {q} \frac {d_{t}}{s_{t}}}(1+ O(e_{t} + \frac {1}{s_{t}})).
\end{eqnarray*}
The rest of the proof follows that of Lemma \ref{lem:tilde_a}.
\qed
\end{proof}
Consider the event
$$\mathcal{A} := \{\forall u \in V(G_{t+1}) d_{t+1}(u) \leq d_{t+1}(1 +O( e_{t} + \sqrt{\frac{\psi} {s_{t}}} + \frac{1}{s_{t}})\}.$$

\begin{lemma}
\label{lem:2A}
Given Assumption \ref{ass:indhyp}, we have
$$Pr(\mathcal{A}) \geq 1 - e^{-\psi} O(n^2).$$
\end{lemma}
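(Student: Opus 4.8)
The plan is to follow the proof of Lemma~\ref{lem:A} almost verbatim, replacing the first‑stage recurrence \eqref{eq:rec_stage_1} by the second‑stage recurrence \eqref{eq:rec_stage_2} and the first‑stage event $\mathcal{\tilde A}$ by its second‑stage counterpart (the event displayed just before the immediately preceding lemma). First I would condition on the occurrence of $\mathcal{\tilde A}$, which by that preceding lemma has probability at least $1 - e^{-\psi}O(n^2)$, and then argue deterministically that on this event every $u \in V(G_{t+1})$ already satisfies $d_{t+1}(u) \leq d_{t+1}(1 + O(e_t + \sqrt{\psi/s_t} + 1/s_t))$.

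The first step is to absorb the awkward error term $\sqrt{\psi d_t/(s_t d_t(u))}$ occurring inside $\mathcal{\tilde A}$. Exactly as in Lemma~\ref{lem:A}, I would multiply it through by the leading factor $d_t(u)$, rewriting $d_t(u)\cdot O(\sqrt{\psi d_t/(s_t d_t(u))})$ as $d_t\cdot O(\sqrt{\psi d_t(u)/(s_t d_t)})$, and then invoke Assumption~\ref{ass:indhyp}: since $\alpha \in [0,1/q]$ and $q \geq 2$ we have $\tfrac{1-q\alpha}{1-\alpha} \leq 1$, hence $d_t(u) \leq d_t(1+e_t)$, which bounds that term by $d_t\cdot O(\sqrt{\psi/s_t})$. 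Using $d_t(u) \leq d_t(1+e_t)$ again on the leading factor, recognizing $d_t\bigl(1 - \tfrac{q-1}{2q^3}e^{-\frac1q\frac{d_t}{s_t}}\bigr)e^{-\frac1q\frac{d_t}{s_t}}$ as precisely $d_{t+1}$ from \eqref{eq:rec_stage_2} (note $d_{t+1} = \Theta(d_t)$ since $d_t/s_t \leq 1/q^2$ in the second stage), and collapsing all the multiplicative error factors via Lemma~\ref{lem:O}, I would conclude that on $\mathcal{\tilde A}$ one has $\tilde d_t(u) \leq d_{t+1}(1 + O(e_t + \sqrt{\psi/s_t} + 1/s_t))$ for every $u \in V(G_{t+1})$.

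The second step is to pass from $\tilde d_t(u) = \bar d_t(u)/\tilde s_t(u)$, the average of $|\tilde D_t(u,c)|$ over the colors surviving phase~II, to the post‑cleanup quantity $d_{t+1}(u)$. Here I would reuse the observation recorded before Lemma~\ref{lem:A}: in phase~III the algorithm removes only colors $c$ with $d_{t+1}(u,c) \geq q\gamma d_{t+1}$, and since $\gamma \geq 1$ is chosen so that the current palette average is at most $\tfrac{1-q\alpha}{1-\alpha}\gamma d_{t+1} \leq \gamma d_{t+1} < q\gamma d_{t+1}$, the removal threshold exceeds the current average; deleting above‑average elements can only decrease the average, so $d_{t+1}(u) \leq \tilde d_t(u)$. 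The bound from the first step therefore carries over to $d_{t+1}(u)$, and taking the union over all $u \in V(G_{t+1})$ gives $Pr(\mathcal{A}) \geq Pr(\mathcal{\tilde A}) \geq 1 - e^{-\psi}O(n^2)$.

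The only real friction I anticipate is the error‑term bookkeeping in the first step — keeping the $\sqrt{d_t(u)}$ in numerator and denominator consistent and checking that every $O(\cdot)$ factor collapses into a single $1 + O(e_t + \sqrt{\psi/s_t} + 1/s_t)$ via Lemma~\ref{lem:O} — together with the minor verification that phase~III removes only above‑average colors. Neither requires any idea beyond the templates of Section~\ref{sec:first_stage}, which is precisely why the write‑up can be compressed to ``the rest of the proof follows that of Lemma~\ref{lem:A}.''
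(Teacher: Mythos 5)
Your proposal is correct and matches the paper's approach: the paper's proof of Lemma~\ref{lem:2A} is literally ``the proof follows that of Lemma~\ref{lem:A},'' and your expansion faithfully reproduces that template with the second-stage recurrence \eqref{eq:rec_stage_2}, the second-stage version of $\mathcal{\tilde A}$, and $1/s_t$ in place of $1/d_t$. Both steps — absorbing the $\sqrt{\psi d_t/(s_t d_t(u))}$ term via $d_t(u)\leq d_t(1+e_t)$ and then noting that phase III only removes above-average colors so $d_{t+1}(u)\leq\tilde d_t(u)$ — are exactly what the paper does (the second step is the remark preceding Lemma~\ref{lem:A}).
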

\begin{proof}
The proof follows that of Lemma \ref{lem:A}.
\qed
\end{proof}
Consider the event
\begin{align*}
\mathcal{S} := 
\{
\forall v \in V(G_{t+1}), & \exists \alpha \in[0, \frac 1 q] \text{ such that } \\
	 & s_{t+1}(u) \geq (1-\alpha) s_{t+1}
	 		(1 + O(e_{t} + \sqrt{\frac{\psi} {s_{t}}} + \frac{1} {s_{t}})), \\
	& d_{t+1}(u) \leq \frac{1-q \alpha}{1-\alpha} d_{t+1}(1+ O(e_{t} + \sqrt{\frac{\psi} {s_{t}}} + \frac{1} {s_{t}}))
	\}.
\end{align*}
\begin{lemma}
\label{lem:2S}
Given Assumption \ref{ass:indhyp}, we have
$$Pr(\mathcal{S}) \geq 1 - e^{-\psi} O(n^2).$$
\end{lemma}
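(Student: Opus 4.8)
The plan is to transcribe the proof of Lemma~\ref{lem:S} into the second-stage setting, substituting the second-stage objects for their first-stage counterparts: the event $\mathcal{\tilde S}$ together with its probability bound now comes from Lemma~\ref{lem:2sg}, the event $\mathcal{\tilde A}$ and its bound come from the (unnumbered) lemma stated immediately before Lemma~\ref{lem:2A}, and the recursions for $s_{t+1}$ and $d_{t+1}$ are those of equation~\eqref{eq:rec_stage_2}. The bookkeeping change is that every first-stage error factor $1 + O(e_t + \sqrt{\psi/s_t} + 1/d_t)$ becomes $1 + O(e_t + \sqrt{\psi/s_t} + 1/s_t)$, which is legitimate because in the second stage vertices are assigned a color with probability $\tfrac1{q^2}\tfrac1{s_t}$ rather than $\tfrac1{q^2}\tfrac1{d_t}$, so the ``$\log(1+x)=x+O(x^2)$'' losses in Phase~II are now governed by $1/s_t$.

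First I would fix $u \in V(G_{t+1})$ and read off from Assumption~\ref{ass:indhyp} an $\alpha \in [0,1/q]$ with $s_t(u) \ge (1-\alpha) s_t(1-e_t)$ and $d_t(u) \le \frac{1-q\alpha}{1-\alpha} d_t(1+e_t)$; this same $\alpha$ will be the witness for round $t+1$. Conditioning on $\mathcal{\tilde S}$ and using $s_{t+1} = s_t e^{-\frac1q\frac{d_t}{s_t}}$ gives $\tilde s_t(u) \ge (1-\alpha) s_{t+1}(1 + O(e_t + \sqrt{\psi/s_t} + 1/s_t))$, exactly as in Lemma~\ref{lem:S}. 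Conditioning on $\mathcal{\tilde A}$, the bound there is $\tilde d_t(u) \le d_t(u)\,(1 - \frac{q-1}{2q^3} e^{-\frac1q\frac{d_t}{s_t}})\,e^{-\frac1q\frac{d_t}{s_t}}\,(1 + O(e_t + \sqrt{\psi/s_t} + 1/s_t + \sqrt{\psi d_t / (s_t d_t(u))}))$; since the bracketed product is precisely $d_{t+1}/d_t$ from equation~\eqref{eq:rec_stage_2}, this rearranges (using the previous step and Lemma~\ref{lem:O}) to $\tilde d_t(u) \le \frac{1-q\alpha}{1-\alpha}\gamma d_{t+1}$ with $\gamma$ the least element of $[1,\infty)$ for which the inequality holds, and the estimates pin down $\gamma = 1 + O(e_t + \sqrt{\psi/s_t} + 1/s_t)$.

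Next I would model the cleanup phase exactly as in the proof of Lemma~\ref{lem:S}: first add $\frac{\alpha}{1-\alpha}\tilde s_t(u)$ dummy colors $c$ with $d_{t+1}(u,c) = q\gamma d_{t+1}$, which brings the palette size up to at least $s_{t+1}(1+O(e_t + \sqrt{\psi/s_t} + 1/s_t))$ and, by Lemma~\ref{lem:mu2}, the average of $d_{t+1}(u,\cdot)$ up to at most $\gamma d_{t+1}$; then delete every color with $d_{t+1}(u,c) \ge q\gamma d_{t+1}$ and invoke Lemma~\ref{lem:mu1} with $\mu = \gamma d_{t+1}$. This yields $s_{t+1}(u) \ge (1-\alpha) s_{t+1}(1 + O(e_t + \sqrt{\psi/s_t} + 1/s_t))$ and $d_{t+1}(u) \le \frac{1-q\alpha}{1-\alpha} d_{t+1}(1 + O(e_t + \sqrt{\psi/s_t} + 1/s_t))$, which are precisely the defining inequalities of $\mathcal{S}$ at $u$. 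Finally, since this holds simultaneously for all $u$ once $\mathcal{\tilde S}$ and $\mathcal{\tilde A}$ both occur, and $Pr(\mathcal{\tilde S} \cap \mathcal{\tilde A}) \ge 1 - e^{-\psi}O(n^2)$ by Lemma~\ref{lem:2sg} and the preceding lemma, the union bound gives $Pr(\mathcal{S}) \ge 1 - e^{-\psi}O(n^2)$.

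The hard part is really just the verification hidden in the $\gamma$ step: one must confirm that the denominator appearing in the $\mathcal{\tilde A}$-bound for the second stage matches the recursion for $d_{t+1}$ in equation~\eqref{eq:rec_stage_2} factor-for-factor, so that the ``survival'' factor $(1 - \frac{q-1}{2q^3} e^{-\frac1q\frac{d_t}{s_t}})$ and the shrink factor $e^{-\frac1q\frac{d_t}{s_t}}$ cancel against $d_{t+1}/d_t$ and leave $\gamma$ as a genuine $1+o(1)$ multiplier; and that $\alpha$ stays in $[0,1/q]$, which is automatic since, as noted in Lemma~\ref{lem:mu1}, $\mu' \ge 0$ forces $\alpha \le 1/q$. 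Everything else is a direct transcription of Section~\ref{sec:first_stage} with $1/d_t$ replaced by $1/s_t$ throughout, exactly as the surrounding second-stage lemmas are stated to follow their first-stage analogues.
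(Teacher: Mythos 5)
Your proposal is correct and matches the paper's own treatment: the paper's proof of Lemma~\ref{lem:2S} consists of the single sentence ``The proof follows that of Lemma~\ref{lem:S}.'', and your write-up is precisely the transcription of the Lemma~\ref{lem:S} argument into the second-stage setting that this delegation asks for (Lemma~\ref{lem:2sg} for $\mathcal{\tilde S}$, the unnumbered second-stage $\mathcal{\tilde A}$ lemma, the recursion~\eqref{eq:rec_stage_2}, and the replacement of the $1/d_t$ error term by $1/s_t$). You correctly identify the one genuine verification point --- that the survival and shrink factors in the second-stage $\mathcal{\tilde A}$-bound cancel exactly against $d_{t+1}/d_t$ from~\eqref{eq:rec_stage_2}.
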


\begin{proof}
The proof follows that of Lemma \ref{lem:S}.
\qed
\end{proof}
Now consider the event
$$\mathcal{D} := \{ \forall v \in V(G_{t+1}), \eta_{t+1}(v) \leq \eta_{t+1}(1 + O(e_{t} + \sqrt{\frac {\psi}{d_{t}}}))\}.$$

\begin{lemma}
\label{lem:2D}
Given Assumption \ref{ass:indhyp}, we have
$$Pr(\mathcal{D}) \geq 1 - e^{-\psi} O(n)$$
\end{lemma}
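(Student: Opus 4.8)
The plan is to follow the proof of Lemma~\ref{lem:D} almost verbatim, substituting the second-stage coloring probability of Lemma~\ref{lem:u_cl_2} for the first-stage one of Lemma~\ref{lem:u_cl} and the recurrences \eqref{eq:rec_stage_2} for \eqref{eq:rec_stage_1}. First fix an uncolored vertex $u$ at the start of round $t$. Lemma~\ref{lem:u_cl_2} gives $Pr\{u\text{ is colored}\} \geq \frac{q-1}{2q^3} e^{-\frac{1}{q}\frac{d_t}{s_t}}(1+O(e_t+\frac{1}{s_t}))$; applying this to each neighbor $v_i \in N_t(u)$, taking expectations, and using Lemma~\ref{lem:O}, the induction hypothesis $\eta_t(u)\leq\eta_t(1+e_t)$ (Assumption~\ref{ass:indhyp}), and the second-stage recurrence for $\eta_{t+1}$, one obtains
\begin{equation*}
E[\eta_{t+1}(u)] \leq \eta_t(u)\Bigl(1-\tfrac{q-1}{2q^3}e^{-\frac{1}{q}\frac{d_t}{s_t}}\Bigr)\Bigl(1+O\bigl(e_t+\tfrac{1}{s_t}\bigr)\Bigr) \leq \eta_{t+1}\Bigl(1+O\bigl(e_t+\tfrac{1}{s_t}\bigr)\Bigr).
\end{equation*}
Since $s_t \geq q^2 d_t$ throughout the second stage we have $1/s_t \leq 1/d_t$, and as $\psi>1$ and $d_t\geq 1$ this is $O(\sqrt{\psi/d_t})$, so the mean is already of the required form up to the concentration slack.

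Next I would establish concentration of $\eta_{t+1}(u)$ via Azuma's inequality (Theorem~\ref{thm:azuma}) exactly as in Lemma~\ref{lem:D}. Write $m=\eta_t(u)$, let $v_1,\dots,v_m$ be the neighbors of $u$ in $G_t$, and view $\eta_{t+1}(u)$ as a function of trials $T_1,\dots,T_m$ with $T_i$ recording whether $v_i$ is colored in round $t$. In the second stage every uncolored vertex is woken, so a vertex is colored with probability $\Theta(1)$ rather than $\Theta(s_t/d_t)$; running the bounded-difference computation of Lemma~\ref{lem:D} with this change gives $\sum\alpha_i^2 = O(\eta_t)$, whence
\begin{equation*}
Pr\Bigl\{\eta_{t+1}(u) \geq \eta_{t+1}\bigl(1+O(e_t+\tfrac{1}{s_t})\bigr) + O\bigl(\sqrt{\psi\eta_t}\bigr)\Bigr\} \leq e^{-\psi}.
\end{equation*}
Because $\eta_{t+1}=\eta_t(1-\Theta(1))=\Theta(\eta_t)$ and $d_t\leq\eta_t$ for every round (the ratio $d_t/\eta_t$ is non-increasing and equals $1$ at the start of the algorithm), the slack $O(\sqrt{\psi\eta_t})$ equals $\eta_{t+1}\cdot O(\sqrt{\psi/\eta_t}) = \eta_{t+1}\cdot O(\sqrt{\psi/d_t})$; combined with $1/s_t = O(\sqrt{\psi/d_t})$ this yields $\eta_{t+1}(u)\leq\eta_{t+1}(1+O(e_t+\sqrt{\psi/d_t}))$ with probability at least $1-e^{-\psi}$. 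A union bound over the at most $n$ vertices of $V(G_{t+1})$ then gives $Pr(\mathcal{D})\geq 1-e^{-\psi}O(n)$.

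The routine parts --- linearity of expectation, the invocation of Lemma~\ref{lem:O}, the final union bound --- are immediate. The step needing care, and the one I expect to be the main obstacle, is the Azuma application: justifying that the single-bit trials $T_i$ can be revealed one at a time with only a small change in the conditional expectation of $\eta_{t+1}(u)$ at each step. As in Lemma~\ref{lem:D}, this rests on triangle-freeness, which forces $v_1,\dots,v_m$ to be pairwise non-adjacent so that the events ``$v_i$ is colored'' have only weak correlations (through common neighbors of pairs $v_i,v_j$), together with the fact that in the second stage the coloring probability is bounded below by a constant; obtaining $\sum\alpha_i^2=O(\eta_t)$ and then checking, via $d_t\leq\eta_t$, that the resulting deviation is $O(\eta_{t+1}\sqrt{\psi/d_t})$ is the heart of the argument. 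This is precisely the computation the paper defers with ``the rest of the proof follows that of Lemma~\ref{lem:D}''.
\qed
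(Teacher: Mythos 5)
Your proposal is correct and takes essentially the same route as the paper: compute $E[\eta_{t+1}(u)]$ from Lemma~\ref{lem:u_cl_2} and linearity, then defer the concentration to the Azuma template of Lemma~\ref{lem:D} (the paper literally writes ``the rest of the proof follows that of Lemma~\ref{lem:D}''). Your explicit working-out of that deferred step --- noting the second-stage coloring probability is $\Theta(1)$ so $\sum\alpha_i^2=O(\eta_t)$, then converting the $O(\sqrt{\psi\eta_t})$ slack to $\eta_{t+1}\,O(\sqrt{\psi/d_t})$ via $\eta_{t+1}=\Theta(\eta_t)$ and the monotone decrease of $d_t/\eta_t$ from its initial value $1$ --- is a faithful and careful fill-in of what the paper leaves implicit.
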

\begin{proof}
Using Lemma \ref{lem:u_cl_2} 
\begin{eqnarray*}
Pr\{\text{$u$ is colored}\}  
	&\geq \frac {q-1} {2q^3} e^{-\frac 1 {q} \frac {d_{t}}{s_{t}}}(1+O(e_{t})).
\end{eqnarray*}
Using linearity of expectation, $\forall u$ in $G_{t+1}$
\begin{align*}
E[\eta_{t+1}(u)] 	&\leq \eta_{t}(u)(1 - \frac {q-1} {2q^3} e^{-\frac 1 {q} \frac {d_{t}}{s_{t}}}(1 + O(e_{t})) \\
			&\leq \eta_{t+1}(1 + O(e_{t})).
\end{align*}
The rest of the proof follows that of Lemma \ref{lem:D}.
\qed
\end{proof}

\section{Acknowledgement}
I am indebted to Fan Chung  and Jacques Verstraete for their comments and support.

\bibliographystyle{amsplain}
\bibliography{color}

\end{document}